\def\EM#1{\ensuremath{#1}}
\def\xx{{\EM{\mbf{x}}}}
\def\yy{{\EM{\mbf{y}}}}
\def\zz{{\EM{\mbf{z}}}}
\newcommand\rest[1][]{\EM{\upharpoonright_{#1}}}
\newcommand{\defn}[1]{{\bf{#1}}}
\def\Set{\text{SET}}
\DeclareDocumentCommand{\Powerset}{}{\mathcal{P}}
\newcommand{\Erdos}{\text{Erd\H{o}s}}
\newcommand{\Konig}{\text{K\H{o}nig}}
\DeclareMathOperator{\ORD}{ORD}
\DeclareMathOperator{\dom}{dom}
\DeclareMathOperator{\range}{ran}
\DeclareMathOperator{\Rel}{Rel}
\DeclareMathOperator{\arity}{ar}
\DeclareDocumentCommand{\TuringJump}{d[]}
{
\IfNoValueTF{#1}
	{
	\EM{\mbf{0'}}
	}
	{
	\EM{\mbf{#1'}}
	}
}
\DeclareDocumentCommand{\TuringDoubleJump}{d[]}
{
\IfNoValueTF{#1}
	{
	\EM{\mbf{0''}}
	}
	{
	\EM{\mbf{#1''}}
	}
}
\DeclareDocumentCommand{\TuringTripleJump}{d[]}
{
\IfNoValueTF{#1}
	{
	\EM{\mbf{0'''}}
	}
	{
	\EM{\mbf{#1'''}}
	}
}
\DeclareDocumentCommand{\TuringFourthJump}{d[]}
{
\IfNoValueTF{#1}
	{
	\EM{\mbf{0^{(4)}}}
	}
	{
	\EM{\mbf{#1^{(4)}}}
	}
}
\DeclareDocumentCommand{\TuringFifthJump}{d[]}
{
\IfNoValueTF{#1}
	{
	\EM{\mbf{0^{(5)}}}
	}
	{
	\EM{\mbf{#1^{(5)}}}
	}
}
\DeclareMathOperator{\Sunop}{Sun}
\DeclareDocumentCommand{\SunflowerComp}{d[]}
{
\IfNoValueTF{#1}
	{
	\EM{\Sunop}
	}
	{
	\EM{\Sunop(#1)}
	}
}
\DeclareMathOperator{\Sunflowerop}{SunFl}
\DeclareDocumentCommand{\Sunflower}{d[]}
{
\IfNoValueTF{#1}
	{
	\EM{\Sunflowerop}
	}
	{
	\EM{\Sunflowerop(#1)}
	}
}
\DeclareMathOperator{\FinSetCompop}{FS}
\DeclareDocumentCommand{\FinSetComp}{d[]}
{
\IfNoValueTF{#1}
	{
	\EM{\FinSetCompop}
	}
	{
	\EM{\FinSetCompop(#1)}
	}
}
\DeclareDocumentCommand{\FinSetSeqComp}{d[]}
{
\IfNoValueTF{#1}
	{
	\EM{\FinSetCompop_{<\w}}
	}
	{
	\EM{\FinSetCompop_{<\w}(#1)}
	}
}
\DeclareMathOperator{\distmathop}{d}
\DeclareDocumentCommand{\dist}{d[]}
{
\IfNoValueTF{#1}
{
    \EM{\distmathop}
}
{
    \EM{\distmathop_{#1}}
}
}
\DeclareDocumentCommand{\dual}{d()}
{
\IfNoValueTF{#1}
	{
	\EM{\hat{\ }}
	}
	{
	\EM{\hat{#1}}
	}
}
\DeclareDocumentCommand{\SizeAS}{d()}
{
\IfNoValueTF{#1}
	{
	\EM{|\cdot|}
	}
	{
	\EM{|#1|}
	}
}
\DeclareDocumentCommand{\compcK}{d[]}
{
\IfNoValueTF{#1}
	{
	\EM{\mathbb{K}}
	}
	{
	\EM{\mathbb{K}[#1]}
	}
}
\DeclareDocumentCommand{\AgeK}{d[]}
{
\IfNoValueTF{#1}
	{
	\EM{\mathbf{K}}
	}
	{
	\EM{\mathbf{K}[#1]}
	}
}
\DeclareDocumentCommand{\Rel}{d[]}
{
\IfNoValueTF{#1}
	{
	\EM{\mathcal{R}}
	}
	{
	\EM{\mathcal{R}_{#1}}
	}
}
\DeclareDocumentCommand{\Func}{d[]}
{
\IfNoValueTF{#1}
	{
	\EM{\mathcal{F}}
	}
	{
	\EM{\mathcal{F}_{#1}}
	}
}
\DeclareDocumentCommand{\ar}{d[]}
{
\IfNoValueTF{#1}
	{
	\EM{\arity}
	}
	{
	\EM{\arity_{#1}}
	}
}
\DeclareDocumentCommand{\Fn}{d<> d[] d()}
{
\IfNoValueTF{#3}
	{
	\EM{\textrm{Fn}(#1, #2)}
	}
	{
	\EM{\textrm{Fn}(#1, #2, #3)}
	}
}
\DeclareDocumentCommand{\Ball}{d[] d()}
{
\IfNoValueTF{#1}
{
    \IfNoValueTF{#2}
    {
    \EM{\mathrm{B}}
    }
    {
    \EM{\mathrm{B}(#2)}
    }
}
{
    \IfNoValueTF{#2}
    {
    \EM{\mathrm{B}_{#1}}
    }
    {
    \EM{\mathrm{B}_{#1}(#2)}
    }
}
}
\DeclareDocumentCommand{\Hereditary}{d()}
{
\IfNoValueTF{#1}
{
    \mathfrak{H}
}
{
    \mathfrak{H}(#1)
}
}
\DeclareMathOperator{\Sym}{Sym}
\DeclareDocumentCommand{\Perm}{d()}
{
\EM{\Sym(#1)}
}
\newcommand{\Fraisse}{\textrm{Fra\"iss\'e}}
\DeclareDocumentCommand{\tdcl}{d[] d()}
{
\IfNoValueTF{#1}
{
    \EM{\textbf{term}(#2)}
}
{
    \EM{\textbf{term}_{#1}(#2)}
}
}
\DeclareDocumentCommand{\gdcl}{d[] d()}
{
\IfNoValueTF{#1}
{
    \EM{\textrm{gcl}(#2)}
}
{
    \EM{\textrm{gcl}_{#1}(#2)}
}
}
\DeclareDocumentCommand{\qftp}{d[] d()}
{
\IfNoValueTF{#1}
{
    \EM{\quantfreetp(#2)}
}
{
    \EM{\quantfreetp_{#1}(#2)}
}
}
\DeclareMathOperator{\quantfreetp}{qtp}
\DeclareDocumentCommand{\Closure}{d<> d()}
{
\IfNoValueTF{#1}
{
    \EM{\textrm{cl}(#2)}
}
{
    \EM{\textrm{cl}_{#1}(#2)}
}
}
\DeclareDocumentCommand{\ClosureMap}{d[] d()}
{
\EM{\textrm{clMap}(#1, #2)}
}
\DeclareDocumentCommand{\HP}{}
{
    \textrm{(HP)}%
}
\DeclareDocumentCommand{\CHP}{d()}
{
\IfNoValueTF{#1}
{
    \textrm{(CHP)}
}
{
    \textrm{(\EM{#1}-CHP)}
}
}
\DeclareDocumentCommand{\JEP}{}
{%
    \textrm{(JEP)}%
}
\DeclareDocumentCommand{\DAP}{d()}
{%
\IfNoValueTF{#1}
{%
    \textrm{(DAP)}%
}%
{%
    \textrm{(\EM{#1}-DAP)}%
}%
}
\DeclareDocumentCommand{\SAP}{}
{
    \textrm{(SAP)}
}
\newcommand{\dotminus}{\mathbin{\text{\@dotminus}}}
\newcommand{\@dotminus}{%
  \ooalign{\hidewidth\raise1ex\hbox{.}\hidewidth\cr$\m@th-$\cr}%
}
\DeclareMathOperator{\Lang}{\mathscr{L}}
\DeclareDocumentCommand{\quadiff}{}
{
    \EM{\quad\text{ if and only if }\quad}
}
\def\aa{{\EM{\mathbf{a}}}}
\def\bb{{\EM{\mathbf{b}}}}
\def\cc{{\EM{\mathbf{c}}}}
\def\dd{{\EM{\mathbf{d}}}}
\def\vv{{\EM{\mathbf{v}}}}
\def\cA{{\EM{\mathcal{A}}}}
\def\cB{{\EM{\mathcal{B}}}}
\def\cC{{\EM{\mathcal{C}}}}
\def\cD{{\EM{\mathcal{D}}}}
\def\cE{{\EM{\mathcal{E}}}}
\def\cG{{\EM{\mathcal{G}}}}
\def\cH{{\EM{\mathcal{H}}}}
\def\cK{{\EM{\mathcal{K}}}}
\def\cL{{\EM{\mathcal{L}}}}
\def\cM{{\EM{\mathcal{M}}}}
\def\cN{{\EM{\mathcal{N}}}}
\def\cP{{\EM{\mathcal{P}}}}
\def\cX{{\EM{\mathcal{X}}}}
\def\cY{{\EM{\mathcal{Y}}}}
\def\cZ{{\EM{\mathcal{Z}}}}
\def\cBL{{\EM{\mathcal{BL}}}}
\def\w{\EM{\omega}}
\def\Rationals{{\EM{{\mbb{Q}}}}}
\def\^{\EM{{}^{\And}}}
\def\And{\EM{\wedge}}
\def\<{\EM{\langle}}
\def\>{\EM{\rangle}}
\def\nl{\newline}
\def\mbb#1{\EM{\mathbb{#1}}}
\def\mbf#1{\EM{\mathop{\pmb{#1}}}}
\def\ul#1{\underline{#1}}
\def\st{\,:\,}
\def\:{\colon}
\providecommand{\dotdiv}{
  \mathbin{
    \vphantom{+}
    \text{
      \mathsurround=0pt 
      \ooalign{
        \noalign{\kern-.35ex}
        \hidewidth$\smash{\cdot}$\hidewidth\cr 
        \noalign{\kern.35ex}
        $-$\cr 
      }%
    }%
  }%
}
\DeclareDocumentCommand{\RightJustify}{m}{\hspace*{\fill}\mbox{#1}\penalty-9999\relax}
\newcounter{margincounter}
\DeclareDocumentCommand{\displaycounter}{}
	{{\arabic{margincounter}}}
\DeclareDocumentCommand{\incdisplaycounter}{}
	{{\stepcounter{margincounter}\arabic{margincounter}}}
\DeclareDocumentCommand{\DeclareComment}{m m m o d()}{%
%
%
%
%
%
%

\expandafter\DeclareDocumentCommand\csname Hide#1\endcsname {}
	{%
	\expandafter\DeclareDocumentCommand\csname #1\endcsname {+m} {}
	
	\expandafter\DeclareDocumentCommand\csname f#1\endcsname {+m} {}

	\expandafter\DeclareDocumentEnvironment{e#1} {} {} {}
	}

\expandafter\DeclareDocumentCommand\csname Show#1\endcsname {}
	{
	\expandafter\DeclareDocumentCommand\csname #1\endcsname {+m}
		{%
		\textcolor{#2}
			{ 
			{\tiny \bf (#3)}
			\IfValueT{#5}
				{%
				#5
				}
			####1
			}
		}

	\expandafter\DeclareDocumentCommand\csname f#1\endcsname {+m}
		{%
		\IfValueTF{#4}
			{
			\textcolor{#2}
			{\text{$\,^{(\incdisplaycounter{#4})}$}}
			\marginpar{\tiny\textcolor{#2}{
				{\text{\tiny $(\displaycounter{#4})$}}
				\text{\IfValueT{#5}{#5}
				####1}}}
			}
			{
			\textcolor{#2}
			{$\,^{(\incdisplaycounter)}$}
			\marginpar{\tiny\textcolor{#2}{
				{\tiny $(\displaycounter)$}
				\text{\IfValueT{#5}{#5}
				####1}}}
			}
		}

	\expandafter\DeclareDocumentEnvironment{e#1} {}
		{
		\textcolor{#2}
		\bgroup
		\IfValueT{#5}
			{%
			#5
			}
		}
		{
		\egroup
		}
	}

\csname Show#1\endcsname

}
\definecolor{NAColor}{rgb}{1.0,0.0,0.0}
\definecolor{ProblemColor}{rgb}{0.7,0.1,0.7}
\definecolor{TBDColor}{rgb}{0.0,0.0,0.8}
\definecolor{MathColor}{rgb}{0.0,0.4,0.1}
\definecolor{NateColor}{rgb}{0.0,0.5,1.0}
\definecolor{MostafaColor}{rgb}{1.0,0.0,1.0}
\definecolor{RefColor}{rgb}{1.0,0.0,1.0}
\definecolor{LaterColor}{rgb}{1.0,0.0,1.0}
\definecolor{EditColor}{rgb}{1.0,0.0,0}
\DeclareDocumentCommand{\DeclareCounter}{m}%
		\newcounter{#1}%
\DeclareDocumentCommand{\MyQED}{}{\qed}
\noindent\IfNoValueTF{#1}
{\emph{Proof.\!\!}}
{\emph{Proof\ #1.\ }}
\DeclareDocumentCommand{\ProofLabel}{}{%
%
\addtocounter{ProofLabelcOUntEr}{1}
\label{cUrrEntProoflAbEl\arabic{ProofLabelcOUntEr}}
}
\DeclareDocumentCommand{\ProofRef}{D<>{1}}
{%
\ref{cUrrEntProoflAbEl\arabic{ProofcOUntEr#1}}
}
\DeclareDocumentCommand{\ProofCref}{D<>{1}}
{%
\cref{cUrrEntProoflAbEl\arabic{ProofcOUntEr#1}}
}
\def\TheoremDepth{section}
\DeclareDocumentCommand{\DeclareTheorem}{m o m o}{%
%
%
%
%

\IfNoValueTF{#4}
	{%
	\IfNoValueTF{#2}
		{%
		\newtheorem{#1vArIAblE}{#3}
		}
		{%
		\newtheorem{#1vArIAblE}[#2vArIAblE]{#3}
		}
	}
	{%
	\newtheorem{#1vArIAblE}{#3}[#4]%
	}
\newtheorem*{#1vArIAblE*}{#3}

\DeclareDocumentEnvironment{#1}{o o}

	{
	\IfValueT{##2}%
		{
		\begin{spacing}{##2}
		}
	\IfValueTF{##1}
		{
		\begin{#1vArIAblE}[##1]
		}
		{
		\begin{#1vArIAblE}
		}
%
	\ProofLabel
	}
	{
	\IfValueT{##2}%
		{
		\end{spacing}{##2}
		}
	\end{#1vArIAblE}
	}

\DeclareDocumentEnvironment{#1*}{o o}

	{
	\IfValueT{##2}%
		{
		\begin{spacing}{##2}
		}
	\IfValueTF{##1}
		{
		\begin{#1vArIAblE*}[##1]
		}
		{
		\begin{#1vArIAblE*}
		}
	}
	{
	\IfValueT{##2}%
		{
		\end{spacing}{##2}
		}
	\end{#1vArIAblE*}
	}
}
\theoremstyle{plain}
\theoremstyle{definition}
\theoremstyle{remark}
\begin{document}

\title{Structured Sunflowers}

\begin{abstract}
We call an infinite structure $\cM$ \emph{sunflowerable} if whenever $\cM'$ is isomorphic to $\cM$ with underlying set $M'$, consisting of finite sets of bounded size, there is an $M_0 \subseteq M'$ such that $M_0$ is a sunflower and $\cM'\!\!\rest[M_0]$ is isomorphic to $\cM$. We give sufficient conditions on $\cM$ to show that $\cM$ is sunflowerable. These conditions allow us to show that several well-known structures are sunflowerable and give a complete characterization of the countable linear orderings which are sunflowerable. 

We show that a sunflowerable structure must be indivisible. This allows us to show that any \Fraisse\ limit which has the 3-disjoint amalgamation property and a single unary type must be indivisible. 

In addition to studying sunflowerability of infinite structures we also consider an analogous property of an age which we call the \emph{sunflower property}. We show that any sunflowerable structure must have an age with the sunflower property. We also give concrete bounds in the case that the age has the hereditary property, the 3-disjoint amalgamation property and is indivisible.  
\end{abstract}

\author{Nathanael Ackerman}
\address{Harvard University,
Cambridge, MA 02138, USA}
\email{nate@aleph0.net}

\author{Mary Leah Karker}
\address{Providence College, RI 02918 USA}
\email{mkarker@providence.edu}

\author{Mostafa Mirabi}
\address{The Taft School, Watertown, CT 06795, USA}
\email{mmirabi@wesleyan.edu}

\subjclass[2020]{05C55, 03C50, 03E05}



\keywords{Sunflower Lemma, $\Delta$-system, Indivisible, Linear Order, 3-Disjoint Amalgamation}

\maketitle


\section{Introduction}

In combinatorial set theory a \emph{sunflower}, or a \emph{$\Delta$-system}, is a collection of sets any two pairs of which have a common intersection. The existence of infinite sunflowers has important applications in mathematical logic, such as for forcing large generic structures (for more information see \cite{Golshani}, \cite{MR4468686}, and \cite{Cohen-Generic-with-Functions_AGM}). 

There are two important situations when it is known that infinite sunflowers exist. First there is the following result of \Erdos\ and Rado which tells us that whenever we have an infinite collection of finite sets, all of the same size, that infinite collection must contain an infinite sunflower.

\begin{theorem}[\Erdos\ and Rado]
\label{Constant size sunflower lemma}
Suppose $n \in \w$ and $Y$ an infinite set consisting of sets of size $n$. Then there is a sunflower $Y_0 \subseteq Y$ with $|Y_0| = |Y|$. 
\end{theorem}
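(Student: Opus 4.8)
The plan is to induct on $n$. The cases $n=0$ and $n=1$ are degenerate and can be dispatched immediately: an infinite family whose members all have size $0$ is impossible, since its only possible member is $\emptyset$; and if every member of $Y$ has size $1$, then the members are pairwise distinct singletons, hence pairwise disjoint, so $Y$ itself is a sunflower with empty core and we may take $Y_0 = Y$. So the content lies entirely in the inductive step.

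Assume the statement for $n$, let every member of $Y$ have size $n+1$, and write $\kappa = |Y|$. I would split on whether some point is ``heavy''. \emph{Case 1: some $a \in \bigcup Y$ lies in $\kappa$ many members of $Y$.} The map $B \mapsto B \setminus \{a\}$ is injective on $\{B \in Y : a \in B\}$, so its image $Y^{\ast}$ is a family of $\kappa$ many sets of size $n$. By the inductive hypothesis there is a sunflower $Z^{\ast} \subseteq Y^{\ast}$ with $|Z^{\ast}| = \kappa$ and some core $K$; since $a \notin \bigcup Y^{\ast}$ we also have $a \notin K$, so $Z = \{\, C \cup \{a\} : C \in Z^{\ast} \,\}$ is a subfamily of $Y$ of size $\kappa$ that is a sunflower with core $K \cup \{a\}$. \emph{Case 2: every $a \in \bigcup Y$ lies in fewer than $\kappa$ members of $Y$.} Here I would build a pairwise disjoint subfamily $\langle B_{\alpha} : \alpha < \kappa \rangle$ by transfinite recursion, which suffices because a pairwise disjoint family is a sunflower with empty core: given $\langle B_{\gamma} : \gamma < \alpha \rangle$, put $U = \bigcup_{\gamma < \alpha} B_{\gamma}$, note $|U| < \kappa$, observe that the members of $Y$ meeting $U$ form the set $\bigcup_{a \in U} \{B \in Y : a \in B\}$, and --- provided this set still has size $< \kappa$ --- pick $B_{\alpha} \in Y$ disjoint from $U$. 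Each $B_{\gamma}$ is nonempty and lies in $U$, so the chosen sets are automatically distinct, and after $\kappa$ steps $\{B_{\alpha} : \alpha < \kappa\}$ is the desired sunflower.

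The crux is the proviso in Case 2: bounding the size of $\bigcup_{a \in U} \{B \in Y : a \in B\}$, which is a union of fewer than $\kappa$ sets each of size less than $\kappa$. When $\kappa$ is regular --- in particular when $|Y| = \aleph_0$, the only case needed for the applications in this paper --- such a union has size $< \kappa$ and the recursion goes through with no further work. For singular $\kappa$ this step is genuinely delicate, and the conclusion ``$|Y_0| = |Y|$'' is sensitive to $\cofinal(|Y|)$; so I would either restrict to regular $|Y|$ or treat the singular case by hand, and I would write out the regular argument --- which is clean and self-contained --- in full detail.
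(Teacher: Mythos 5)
The paper never proves this theorem internally --- its ``proof'' is only a citation to \Erdos--Rado and Kunen --- so there is no argument of the paper's to compare yours against step by step; what you give is the standard textbook induction, and it is essentially the argument those sources use. Your base cases are fine, your Case 1 (a ``heavy'' point) works for every infinite $\kappa = |Y|$, and your Case 2 recursion is exactly right when $\kappa$ is regular: $|U| \leq (n+1)\cdot|\alpha| < \kappa$, and the members of $Y$ meeting $U$ form a union of fewer than $\kappa$ sets each of size less than $\kappa$, hence of size less than $\kappa$, so a member disjoint from $U$ always exists. So you have a complete and correct proof for regular $|Y|$, in particular for $|Y| = \aleph_0$.

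Your hedge about singular $|Y|$ is not excess caution, and this is the one point to sharpen: the statement as printed is actually \emph{false} for every singular cardinal, so ``treating the singular case by hand'' is not an available option --- restricting to regular $|Y|$ is forced. Concretely, let $\kappa$ be singular, $\mu = \mathrm{cf}(\kappa) < \kappa$, and $\kappa = \sup_{i<\mu}\kappa_i$ with each $\kappa_i < \kappa$. Take distinct points $a_i$ for $i < \mu$ and pairwise disjoint sets $B_i$ with $|B_i| = \kappa_i$, all disjoint from $\{a_j \st j < \mu\}$, and let $Y = \{\,\{a_i, b\} \st i < \mu,\ b \in B_i\,\}$. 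Then $|Y| = \kappa$ and every member has size $2$, but any sunflower in $Y$ with nonempty core lies inside the star at a single $a_i$ and so has size at most $\kappa_i$, while a sunflower with empty core contains at most one member per index $i$ and so has size at most $\mu$; in either case its size is strictly below $\kappa$. So your Case 2 is not merely where your argument needs regularity --- it is where the theorem itself fails. One small correction to your closing remark: the countable case is not the only one the paper relies on; the theorem is invoked for arbitrary infinite $\kappa$ in the proof of Theorem \ref{Characterization of kappa=scattered sunflowerable linear orderings} (to see that the orders $\kappa$ and $\kappa^*$ are sunflowerable), so the regular-cardinal version you prove is what that application genuinely needs, and the singular case is an inherited problem there rather than something your proof should be expected to recover.
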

\begin{proof}
See \cite[Theorem I.ii]{MR111692} or \cite{MR2220838} p. 107 and p. 421. 
\end{proof}

Second we have the following which is known as the $\Delta$-systems lemma, and which considers the case when all sets are finite, but not necessarily of the same size. 

\begin{theorem}[Shanin]
\label{Delta-system lemma}
Suppose $Y$ is an uncountable collection of finite sets. Then there is a sunflower $Y_0 \subseteq Y$ with $|Y_0| = |Y|$. 
\end{theorem}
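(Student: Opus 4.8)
The plan is to reduce to the case where all members of $Y$ have the same cardinality and then invoke the \Erdos--Rado result, Theorem~\ref{Constant size sunflower lemma}. Write $\kappa = |Y|$ and, for each $n \in \w$, let $Y_n = \{A \in Y : |A| = n\}$, so that $Y = \bigcup_{n \in \w} Y_n$ is a countable union. First I would show that some $Y_n$ has size $\kappa$: if instead $|Y_n| < \kappa$ for all $n$, then, using that $\kappa$ is uncountable (indeed of uncountable cofinality), $\mu := \sup_n |Y_n| < \kappa$, and hence $\kappa = \big|\bigcup_n Y_n\big| \le \aleph_0 \cdot \mu = \mu < \kappa$, a contradiction. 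When $\kappa = \aleph_1$ this step is simply the fact that a countable union of countable sets is countable.

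Fixing such an $n$ and setting $Z = Y_n$, the family $Z$ is infinite and consists of sets each of size exactly $n$, so Theorem~\ref{Constant size sunflower lemma} yields a sunflower $Y_0 \subseteq Z$ with $|Y_0| = |Z| = \kappa$; as $Z \subseteq Y$, this is the desired $Y_0$. I do not expect a genuine obstacle: the argument is a one-line pigeonhole followed by a citation. The one place demanding care is the cardinal arithmetic — a countable union of sets of size $< \kappa$ can fail to have size $< \kappa$ precisely when $\kappa$ has countable cofinality — so the conclusion that $Y_0$ has the full size $|Y|$ is the one obtained cleanly when $|Y|$ is regular (in particular for $|Y| = \aleph_1$, the case used throughout this paper), and one should be careful not to over-read Theorem~\ref{Constant size sunflower lemma} when $|Y|$ is singular.

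For the $\kappa = \aleph_1$ case there is also a self-contained proof that avoids Theorem~\ref{Constant size sunflower lemma}, via the usual pressing-down argument, which I would include as a remark. After relabelling we may assume $Y = \{A_\alpha : \alpha < \w_1\}$ with each $A_\alpha$ a finite subset of $\w_1$. The function $\alpha \mapsto \sup(A_\alpha \cap \alpha)$ is regressive on the closed unbounded (hence stationary) set of limit ordinals, so by Fodor's lemma it is constant, with some value $\beta$, on a stationary set $S$. Since $\beta + 1$ is countable, only countably many finite sets can occur as $A_\alpha \cap (\beta + 1)$, so uncountably many $\alpha \in S$ share a common root $r := A_\alpha \cap (\beta+1)$. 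A transfinite recursion then picks $\alpha_\xi \in S$ for $\xi < \w_1$ with each $\alpha_\xi > \sup\big(\bigcup_{\eta < \xi} A_{\alpha_\eta}\big)$, and one checks directly that $\{A_{\alpha_\xi} : \xi < \w_1\}$ is a sunflower with core $r$. Since Theorem~\ref{Constant size sunflower lemma} is already available, I would make the reduction above the main proof and keep this argument as a remark.
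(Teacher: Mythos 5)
Your argument is correct on its main line, but it takes a genuinely different route from the paper: the paper gives no argument at all for Theorem \ref{Delta-system lemma}, simply citing Kunen (Theorem I.9.18), whereas you derive it from Theorem \ref{Constant size sunflower lemma} by a pigeonhole on the sizes of the members of $Y$, and you additionally sketch the standard self-contained proof for $\aleph_1$ via Fodor's lemma (regressive function $\alpha \mapsto \sup(A_\alpha \cap \alpha)$ on limit ordinals, stabilize the root below $\beta+1$ by countability of $[\beta+1]^{<\omega}$, then a transfinite recursion choosing indices above the sup of everything used so far); that pressing-down argument is the classical textbook proof and is carried out correctly (the only slip is notational: the recursion should select from the uncountable subset of $S$ with the fixed root $r$, not from $S$ itself). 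What each approach buys: the citation covers the general regular-cardinal Delta-system lemma in one stroke, while your reduction makes the logical dependence on Theorem \ref{Constant size sunflower lemma} explicit and gives the reader an actual proof in the $\aleph_1$ case. Your caution about singular $|Y|$ is not merely prudence but is mathematically on target: as literally stated (with the conclusion $|Y_0| = |Y|$ for an arbitrary uncountable $Y$), the theorem fails for singular cardinals, and so does Theorem \ref{Constant size sunflower lemma}. For instance, a disjoint union of stars with centers $v_i$ ($i < \mathrm{cf}(\kappa)$) and $\kappa_i$ edges each, where the $\kappa_i < \kappa$ are cofinal in a singular $\kappa$, is a family of $\kappa$ two-element sets in which every sunflower is either contained in one star (size at most $\kappa_i$) or is a partial matching (size at most $\mathrm{cf}(\kappa)$), so no sunflower of size $\kappa$ exists; the sources cited in the paper state these results for regular cardinals. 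So your proof captures exactly the true (regular, in particular $\aleph_1$) content of the statement, and the residual discrepancy lies in the statement's phrasing rather than in your argument.
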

\begin{proof}
See \cite[Theorem I.9.18]{MR1940513}
\end{proof}


In this paper, we will be interested in collections of sets where the collection itself is an $\Lang$-structure, for some language $\Lang$, e.g. is a graph or a linear ordering. We say such a $\Lang$-structure is a \emph{structured sunflower} when its underlying set is a sunflower. 
We say an infinite $\Lang$-structure $\cM$ is \emph{sunflowerable} if whenever there is an isomorphic copy of $\cM$ whose underlying set consists of finite sets of the same size, then the isomorphic copy contains a structured sunflower. In this set up the \Erdos\ and Rado theorem says that if $\Lang_{\emptyset}$ is the empty language then every infinite $\Lang_\emptyset$-structure is sunflowerable. 

In Section \ref{Structured Sunflower Section} we will study variants of the property of being sunflowerable and will give a natural condition on an $\Lang$-structure, which is satisfied by dense linear orderings and the Rado graph, which guarantees a structure is sunflowerable. 

In Section \ref{Linear Ordered Sunflower Section} we focus on linear orderings. For both $\kappa$-scattered linear orderings and countable linear orderings we give a complete characterization of when a structure is sunflowerable. 

While the existence of infinite sunflowers is important in mathematical logic and combinatorial set theory, the existence of large finite sunflowers has significance in many other fields, including the study of circuit lower bounds, matrix multiplication, pseudo-randomness, and cryptography. For an overview of the connections to computer science see \cite{MR4334977}. One of the most important results in the study of finite sunflowers is the sunflower lemma of \Erdos\ and Rado (\cite{MR111692}). Ignoring the explicit bounds which were obtained, this lemma can be framed as follows. 

\begin{lemma}
\label{Sunflower Lemma}
For every $n \in \w$ and every finite set $X$ there is a finite set $Y$ such that for every set $Y'$ satisfying
\begin{itemize}
\item $|Y'| = |Y|$, and 

\item $Y'$ consists of sets of size $n$,
\end{itemize}
there is a set $X' \subseteq Y'$ where
\begin{itemize}
\item $X'$ is a sunflower, 

\item $|X'| = |X|$
\end{itemize}
\end{lemma}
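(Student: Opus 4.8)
The statement is a bound-free restatement of the classical finite sunflower lemma of \cite{MR111692}, so the quickest proof is simply to invoke that lemma: writing $k = |X|$, if $n \geq 1$ then any family of more than $n!\,(k-1)^n$ sets each of size $n$ contains a sunflower with $k$ petals, so one may take $|Y| = n!\,(k-1)^n + 1$. I will instead sketch how to derive the lemma from the infinite \Erdos--Rado theorem (Theorem~\ref{Constant size sunflower lemma}) by a compactness argument, which stays closer to the results already recorded above.

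First dispose of the trivial cases: if $|X| \leq 1$ take $|Y| = 1$, since a one-element family is vacuously a sunflower; and if $n = 0$ take $|Y| = 2$, so that there is no $Y'$ satisfying the hypotheses and the conclusion holds vacuously. So assume $n \geq 1$ and $k := |X| \geq 2$, and suppose toward a contradiction that no finite $Y$ works for this $n$ and $X$. Then for every $m \in \w$ there is a family $\cF_m$ of exactly $m$ distinct sets of size $n$, no $k$ members of which form a sunflower. I would then build a search tree $T$ whose level-$m$ nodes are the sequences $(A_0, \dots, A_{m-1})$ of distinct $n$-subsets of $\w$ such that no $k$ of the $A_i$ form a sunflower and, for every $j \leq m$, the set $\bigcup_{i<j} A_i$ is an initial segment of $\w$; the parent of a node is obtained by deleting its last term, which again lies in $T$ since a subfamily of a sunflower-free family is sunflower-free. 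The initial-segment normalization makes $T$ finitely branching — a set extending a level-$m$ node must lie inside $\{0, \dots, t+n-1\}$, where $\{0, \dots, t-1\} = \bigcup_{i<m} A_i$ — while relabelling the ground set of each $\cF_m$ in the order its members are listed produces a node of $T$ at level $m$. Hence $T$ is infinite and finitely branching, so by \Konig's lemma it has an infinite branch $(A_0, A_1, \dots)$. Then $\{A_i : i \in \w\}$ is an infinite family of sets of size $n$ containing no $k$-element sub-sunflower, since any such sub-sunflower would already occur in an initial segment $(A_0, \dots, A_{m-1})$ of the branch. This contradicts Theorem~\ref{Constant size sunflower lemma}, which produces an infinite — hence a $k$-element — sunflower inside $\{A_i : i \in \w\}$.

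With $|Y|$ chosen as above, any admissible $Y'$ contains a sunflower $X''$ with $|X''| \geq k$, and discarding petals from $X''$ (which preserves being a sunflower) gives $X' \subseteq Y'$ with $X'$ a sunflower and $|X'| = |X|$, as required. Since the lemma is in essence a reformulation of a known theorem, there is no real obstacle here; the only step requiring care is the compactness extraction, and there the point is that normalizing to initial segments is exactly what keeps the tree of finite counterexamples finitely branching, so that \Konig's lemma — or equivalently the first-order compactness theorem — can be applied to it.
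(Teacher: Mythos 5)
Your proof is correct and matches the paper's treatment: the paper states this lemma as the classical \Erdos--Rado sunflower lemma with the explicit bounds suppressed (citing \cite{MR111692}), and in the introduction to Section \ref{Sunflower Property Of An Age} it notes precisely the compactness derivation from Theorem \ref{Constant size sunflower lemma} that you carry out. Your \Konig's-lemma extraction (with the initial-segment normalization keeping the tree finitely branching) is a correct and complete elaboration of that remark, and the explicit bound $n!\,(k-1)^n + 1$ you quote is the standard one.
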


If this lemma holds when we replace ``finite set'' by ``finite structure in an age $\AgeK$'' (in an appropriate way) then we say $\AgeK$ has the sunflower property. In Section \ref{Sunflower Property Of An Age} we will consider the sunflower property of an age and show that if a countable structure is sunflowerable then its age has the sunflower property. 

In Section \ref{Indivisibility} we also show that if a structure is sunflowerable, or if an age has the sunflower property, then it must be indivisible. This allows us to deduce that any structure which has a single unary type and the 3-disjoint amalgamation property is indivisible. 

Finally in Section \ref{Conjectures} we end with some conjectures. 

\subsection{Notation}

We work in a background model of set theory which we call $\Set$. We let $\Powerset_{<\w}(\Set)$ be the collection of finite sets. For $n \in \w$, we let $\Powerset_{n}(\Set)$ be the collection of sets of size $n$. We also let $[n] = \{0, 1, \dots, n-1\}$.  If $f$ is a function we let $\dom(f)$ denote its domain and $\range(f)$ denote its range. If $\alpha$ is an ordinal then $\alpha^*$ is the order type given by $(\alpha, \ni)$, i.e. the reverse ordering of $(\alpha, \in)$.

We fix a language $\Lang$ all of whose functions and relations have finite arity. We will use script letters to represent $\Lang$-structures, i.e. $\cA, \cM_0$, etc. and the corresponding Roman letters to represent their underlying sets, i.e. $A, M_0$, etc. If $\cA$ is a structure and $B \subseteq A$ then we let $\cA \rest[B]$ be the substructure of $\cA$ with underlying set $B$. We will use boldface to signify sequences of variables or of elements. We will write $\aa \subseteq \cM$ when $\aa$ is a sequence of elements of $\cM$. We will write $|\aa|$ for the length of such a sequence.  We write $\qftp[\cM](\aa)$ for the \defn{(complete) quantifier-free type} of $\aa$ in $\cM$, i.e. the collection of literals which are true of the tuple $\aa$ in $\cM$. All quantifier-free types considered in this paper will be complete.

\begin{definition}
We say a structure $\cM$ is \defn{ultrahomogeneous} if whenever 
\begin{itemize}
\item $\aa, \bb \subseteq \cM$ are tuples of size $< |M|$, 

\item  $\qftp[\cM](\aa) = \qftp[\cM](\bb)$ (and in particular $|\aa| = |\bb|$)

\item $a \in \cM$,
\end{itemize}
then there is some $b \in \cM$ such that $\qftp[\cM](\aa, a) = \qftp[\cM](\bb, b)$. 
\end{definition}

\section{Structured Sunflower}
\label{Structured Sunflower Section}

In this section we introduce one of the main properties of study, that of being \emph{sunflowerable}.

\begin{definition}
A \defn{sunflower} is a subset $Y \subseteq \Powerset_{<\w}(\Set)$ such that 
\[
(\exists r)\, (\forall p, q \in Y)\, p \neq q \rightarrow p \cap q = r.
\]
We say a set $Z \subseteq \Powerset_{<\w}(\Set)$ \defn{contains} a sunflower if there is a $Y \subseteq Z$ which is a sunflower. 
\end{definition}

\begin{definition}
Suppose $\cY$ is an $\Lang$-structure where $Y \subseteq \Powerset_{<\w}(\Set)$. We say a substructure $\cZ \subseteq \cY$ is a \defn{structured sunflower} if 
\begin{itemize}
\item $\cZ \cong \cY$, 

\item $Z$ is a sunflower. 

\end{itemize}

We say $\cY$ \defn{contains a structured sunflower} if there is a $\cZ \subseteq \cY$ which is a structured sunflower. 
\end{definition}

\begin{definition}
Suppose $\cM$ is an infinite $\Lang$-structure and either $n \in \w$ or $n$ is $< \w$. We say $\cM$ is \defn{$n$-sunflowerable} if whenever  
\begin{itemize}

\item $\cY$ is an $\Lang$-structure with $\cY \cong \cM$,

\item $Y \subseteq \Powerset_{n}(\Set)$, 

\end{itemize}
then $\cY$ contains a structured sunflower. 

We say $\cM$ is \defn{sunflowerable} if it is $n$-sunflowerable for every $n \in\w$. We say $\cM$ is \defn{strongly sunflowerable} if it is $<\w$-sunflowerable. 
\end{definition}

The following shows that the notion of being strongly sunflowerable is only interesting for uncountable structures. 
\begin{lemma}
\label{Countable structures and not strongly sunflowarable}
If $\cM$ is countable, then $\cM$ is not strongly sunflowerable.
\end{lemma}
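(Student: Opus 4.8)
The plan is to refute strong sunflowerability directly, by producing a single isomorphic copy of $\cM$ whose underlying set is a family of finite sets that contains no sunflower large enough to carry a copy of $\cM$. Throughout, recall that strong sunflowerability is only defined for infinite $\cM$, so $M$ is countably infinite.

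First I would fix the increasing tower $Y = \{[n] : n \in \w\} \subseteq \Powerset_{<\w}(\Set)$, that is, $\emptyset \subsetneq \{0\} \subsetneq \{0,1\} \subsetneq \cdots$. This is a countably infinite collection of finite sets, and the key point is that it contains no sunflower with three or more members: if $[a], [b], [c] \in Y$ are pairwise distinct, say with $a < b < c$, then since these sets are nested we have $[a] \cap [b] = [a]$ and $[b] \cap [c] = [b]$, so any common core $r$ would satisfy $[a] = r = [b]$, contradicting $a \neq b$. Thus every sunflower $Y_0 \subseteq Y$ has $|Y_0| \le 2$.

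Next, since $|M| = |Y| = \aleph_0$, I would fix a bijection $f \colon M \to Y$ and let $\cY$ be the unique $\Lang$-structure with underlying set $Y$ for which $f$ is an isomorphism $\cM \to \cY$, obtained by transporting each relation, function, and constant of $\cM$ along $f$. Then $\cY \cong \cM$ and $Y \subseteq \Powerset_{<\w}(\Set)$, so $\cY$ is among the structures quantified over in the definition of $<\w$-sunflowerability. If some $\cZ \subseteq \cY$ were a structured sunflower, then $\cZ \cong \cY \cong \cM$ would force $|Z| = \aleph_0 \ge 3$ while $Z \subseteq Y$ is a sunflower, contradicting the previous paragraph. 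Hence $\cY$ contains no structured sunflower, and therefore $\cM$ is not $<\w$-sunflowerable, i.e.\ not strongly sunflowerable.

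There is essentially no obstacle here; the only step that takes a moment is the choice of a countably infinite family of finite sets with no infinite sunflower. This is possible precisely because, in contrast with the uncountable case of the $\Delta$-system lemma (Theorem~\ref{Delta-system lemma}) and the fixed-size case of Theorem~\ref{Constant size sunflower lemma}, a countable family of finite sets of unbounded cardinality need not contain an infinite sunflower at all — the tower above being the standard witness.
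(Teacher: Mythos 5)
Your proof is correct and follows essentially the same route as the paper: the paper takes $\w \subseteq \Powerset_{<\w}(\w)$ (the von Neumann naturals, which are exactly your tower $\{[n] : n\in\w\}$), observes it contains no sunflower of size $3$, and concludes no structure on it can contain an infinite structured sunflower. You merely spell out the nestedness argument and the transport of structure along a bijection in more detail.
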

\begin{proof}
Note that $\w \subseteq \Powerset_{<\w}(\w)$ but $\w$ contains no sunflower of size $3$. Therefore no structure on $\w$ can contain an infinite substructure which is a sunflower. 
\end{proof}

The following shows that the notions of $n$-sunflowerable form a hierarchy. 

\begin{lemma}
\label{n+2-sunflowrable implies n+1-sunflowerable}
Suppose $n \in \w$ and $\cM$ is $n+2$-sunflowerable. Then $\cM$ is $n+1$-sunflowerable. 
\end{lemma}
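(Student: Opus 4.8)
The plan is to prove the contrapositive: given an isomorphic copy $\cY \cong \cM$ with $Y \subseteq \Powerset_{n+1}(\Set)$ containing no structured sunflower, I will manufacture from it a copy $\cY' \cong \cM$ with $Y' \subseteq \Powerset_{n+2}(\Set)$ also containing no structured sunflower, contradicting $(n+2)$-sunflowerability. The natural move is to add one fresh element to each set. More precisely, fix some $e \notin \bigcup Y$ (such an $e$ exists since $\bigcup Y$ is a set), and for each $p \in Y$ set $p' = p \cup \{e\}$; let $Y' = \{\, p' : p \in Y \,\}$ and transport the $\Lang$-structure of $\cY$ along the bijection $p \mapsto p'$ to get $\cY' \cong \cM$ with underlying set $Y' \subseteq \Powerset_{n+2}(\Set)$.

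The key step is then to check that $\cY'$ contains no structured sunflower. Suppose toward a contradiction that $\cZ' \subseteq \cY'$ is one, so $\cZ' \cong \cY$ and $Z'$ is a sunflower, say with core $r'$. Since $|M| = |\cM|$ is infinite, $|Z'| \geq 3$, so the sunflower core satisfies $r' = p' \cap q'$ for distinct $p', q' \in Z'$; because $e$ lies in every member of $Y'$, we have $e \in r'$. Now pull back along the bijection: $Z = \{\, p \in Y : p' \in Z' \,\}$ is isomorphic (as an $\Lang$-structure) to $\cZ' \cong \cY$, and I claim $Z$ is itself a sunflower. Indeed for distinct $p, q \in Z$ we have $p \cap q = (p' \setminus \{e\}) \cap (q' \setminus \{e\}) = (p' \cap q') \setminus \{e\} = r' \setminus \{e\}$, which is a fixed set independent of the choice of $p, q$. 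Hence $\cZ := \cY\rest[Z]$ is a structured sunflower inside $\cY$, contradicting our assumption on $\cY$.

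I do not expect any real obstacle here; the only point needing a moment of care is the use of $|M| \geq 3$ — i.e. that an infinite structure's underlying set has at least three elements — to guarantee that the sunflower $Z'$ actually has a well-defined core realized as an intersection of two distinct members, so that we may conclude $e \in r'$ and then strip it off uniformly. (If one worries about the degenerate convention where a sunflower with $\le 1$ element has an ill-determined core, note that the definition quantifies $\forall p,q$, so any superset-core works and the argument $p\cap q = r'\setminus\{e\}$ still goes through once $|Z'|\ge 2$.) Everything else — that transporting structure along a bijection yields an isomorphic copy, that restriction to $Z$ of $\cY$ matches the pullback of $\cZ'$ — is immediate from the definitions.
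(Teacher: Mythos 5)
Your proposal is correct and uses essentially the same argument as the paper: pad each $(n+1)$-element set with a single fresh element to get an isomorphic copy on $(n+2)$-element sets, apply $(n+2)$-sunflowerability, and pull the resulting sunflower back (the paper phrases this directly rather than by contradiction, and leaves the check that the pulled-back family is a sunflower implicit, which you verify explicitly). No gaps.
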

\begin{proof}
Suppose $\cM' \cong \cM$ and $M' \subseteq \Powerset_{n+1}(\Set)$. Let $x$ be any set not in $\bigcup M'$. Let $M^* = \{a \cup \{x\} \st a \in M'\}$ and let $\alpha\:M' \to M^*$ be the map where $\alpha(a) = a \cup \{x\}$. Then $\alpha$ is a bijection. Let $\cM^*$ be the $\Lang$-structure such that $\alpha$ is an isomorphism from $\cM'$ to $\cM^*$. Because $\cM$ is $n+2$-sunflowerable and $M^* \subseteq \Powerset_{n+2}(\Set)$ there must be a $M^\circ \subseteq M^*$ such that $M^\circ$ is a sunflower and $\cM^* \rest[M^\circ] \cong \cM^*$. Let $M^+ = \alpha^{-1}(M^\circ)$. We then have $M^+$ is a sunflower and $\cM'\rest[M^+] \cong \cM$. Therefore, as $\cM'$ was arbitrary, $\cM$ is $n+1$-sunflowerable. 
\end{proof}

We now give some conditions on a structure which will guarantee that it is sunflowerable.

\begin{definition}
We say a structure $\cM$ is \defn{$\kappa$-indivisible} if whenever $(P_i)_{i \in \alpha}$ partitions $M$ with $\alpha < \kappa$, then for some $i \in \alpha$,  there is a $Q \subseteq P_i$ such that $\cM\rest[Q] \cong \cM$. 

We say $\cM$ is \defn{indivisible} if it is $|M|$-indivisible. 
\end{definition}

Being indivisible is a strong form of robustness. We first show that if a structure is 2-sunflowerable then it is indivisible. 
\begin{proposition}
\label{2-sunfloweable implies indivisible}
If $\cM$ is $2$-sunflowerable then $\cM$ is indivisible. 
\end{proposition}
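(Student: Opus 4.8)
The plan is to take a partition $(P_i)_{i \in \alpha}$ of $M$ with $\alpha < |M|$ and produce, from it, an isomorphic copy $\cY$ of $\cM$ whose underlying set lives inside $\Powerset_2(\Set)$, in such a way that the sunflower guaranteed by $2$-sunflowerability is forced to sit inside a single cell $P_i$. The natural device is to code the partition into a second coordinate. Fix for each $i \in \alpha$ a ``colour'' $c_i$, chosen so that all the $c_i$ are distinct and none of them is an element of $M$; also fix an injection $e \colon M \to \Set$ with $\range(e)$ disjoint from $\{c_i : i \in \alpha\}$ and from $M$ itself (for instance, tag everything to keep the universes apart). For $m \in M$ with $m \in P_i$, set $\beta(m) = \{e(m), c_i\}$, a set of size exactly $2$. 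Then $\beta$ is injective, so transport the structure: let $\cY$ be the $\Lang$-structure on $Y := \range(\beta)$ making $\beta \colon \cM \to \cY$ an isomorphism. Now $Y \subseteq \Powerset_2(\Set)$ and $\cY \cong \cM$, so by $2$-sunflowerability there is $Z \subseteq Y$ with $Z$ a sunflower and $\cY\!\!\rest[Z] \cong \cY \cong \cM$.

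The key step is then to observe that a sunflower $Z$ of $2$-element sets, once it has at least $3$ members (which it does, since $\cM$ is infinite and $\cY\rest[Z]\cong\cM$), must have all its sets sharing a common single element. Indeed, if $Z = \{z_k\}$ is a sunflower with kernel $r$, then $|r| < 2$ (else two distinct sets would be equal), so $|r| \le 1$; and if $|r| = 0$ the sets of $Z$ are pairwise disjoint, which for $3$ sets of size $2$ forces $6$ distinct elements among them, fine so far — but the common element of the kernel is what we want, so I should argue directly: each $z_k = \{e(m_k), c_{i_k}\}$. If two of the $z_k$ shared the element $e(m)$ for the same $m$, they would be equal; so the $e$-coordinates are all distinct, and since $|Z| \ge 3 > 2 = |z_k|$, for the pairwise intersections to be constant they cannot all be empty (three pairwise-disjoint doubletons is possible, so I must rule out the kernel being empty differently). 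Here is the clean version: the kernel $r$ satisfies $r \subseteq z_k$ for all $k$; if $r = \emptyset$, the $z_k$ are pairwise disjoint. But each $z_k$ contains one element of $\range(e)$ and one $c_{i}$; with all $e$-coordinates distinct and all sets pairwise disjoint we would need all the $c$-coordinates distinct too, i.e. infinitely many distinct colours $c_i$ appearing — impossible if $\alpha$ indexes the partition cells only when... this is where I need the pigeonhole, so let me not fight it and instead note $|\alpha| < |M| = |Z|$, hence by pigeonhole two of the $z_k$ have the same colour $c_i$, so their intersection contains $c_i$, so the kernel $r$ is nonempty, hence $r = \{c_i\}$ for a single $i$; and then every $z_k \in Z$ contains $c_i$, forcing every $z_k$ to have the form $\{e(m_k), c_i\}$ with $m_k \in P_i$.

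Therefore $\beta^{-1}(Z) \subseteq P_i$, and since $\beta$ is an isomorphism $\cM\!\!\rest[\beta^{-1}(Z)] \cong \cY\!\!\rest[Z] \cong \cM$. This is exactly the conclusion that some cell $P_i$ contains a copy of $\cM$, so $\cM$ is $|M|$-indivisible, i.e. indivisible. Since the partition was arbitrary, we are done.

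The main obstacle — and the only delicate point — is making the cardinality bookkeeping airtight: one must be sure that $|Z| = |M|$ is genuinely larger than $\alpha$ so that the pigeonhole applies (this is where the hypothesis $\alpha < |M|$ in the definition of indivisibility is used, together with $|Z| = |M|$ which follows from $\cY\rest[Z] \cong \cM$), and one must handle the choice of the tagging injection $e$ and the colours $c_i$ so that every $\beta(m)$ genuinely has size $2$ and $\beta$ is injective. Everything else is routine transport of structure along a bijection, exactly as in the proof of Lemma~\ref{n+2-sunflowrable implies n+1-sunflowerable}.
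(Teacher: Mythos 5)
Your proof is correct and follows essentially the same route as the paper: encode each cell of the partition by pairing its elements with a cell-specific tag to obtain a copy of $\cM$ inside $\Powerset_2(\Set)$, then use the cardinality bound $\alpha < |M| = |Z|$ to force two sets of the sunflower to share a colour, so the kernel is that colour and the whole sunflower pulls back into a single cell $P_i$. The only difference from the paper's argument is cosmetic (abstract tags $e(m), c_i$ versus the explicit pairs $\{(0,i),(i+1,j)\}$).
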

\begin{proof}
Suppose $\cM$ is $2$-sunflowerable, $|M| = \kappa$ and $(P_i)_{i \in \alpha}$ is a partition of $M$ with $\alpha < \kappa$. Let $|P_i| = \gamma_i$ for $i \in \alpha$. For $i \in \alpha$ let $Q_i = \{\{(0, i), (i+1, j)\}\}_{j \in \gamma_i} \subseteq \Powerset_{2}(\kappa \times \kappa)$. Note $|Q_i| = |P_i|$ and $Q_i$ is a sunflower. Let $Q = \bigcup_{i \in \alpha} Q_i$.

Note that for $i < j \in \alpha$, $Q_i \cap Q_j = \emptyset$. Therefore if for $i \in \alpha$, $\beta_i\:P_i \to Q_i$ is a bijection and  $\beta = \bigcup_{i \in \alpha} \beta_i$ then $\beta$ is a bijection from the underlying set of $\cM$ onto $Q$. Let $\cN$ be the $\Lang$-structure such that $\beta\:\cM \to\cN$ is an isomorphism. 

Because $\cM$ is $2$-sunflowerable there must be a $Y \subseteq Q$ which is a sunflower with $\cN\rest[Y] \cong \cM$. Then $|Y| = |M| = \kappa$. But then there must be distinct $a, b \in Y$ with $a \cap b \neq \emptyset$ as otherwise $|Y \cap Q_i| = 1$ for all $i \in\alpha$ and $|Y| \leq \alpha < \kappa$. 

As $|a| = |b| = 2$ we must have $|a\cap b| =1$. Hence for some $i \in \alpha$ we have $a \cap b = \{(0, i)\}$. But, because $Y$ is a sunflower, this implies for all distinct $c, d \in Y$ that $c \cap d = \{(0, i)\}$ and $c, d \in Q_i$. Therefore $Y \subseteq Q_i$. But then $\cM \rest[{\beta^{-1}[Y]}] \subseteq P_i$ is isomorphic to $\cM$ and witnesses the fact that $\cM$ is indivisible. 
\end{proof}  

We now show that being strongly sunflowerable is equivalent to being sunflowerable and uncountable. 

\begin{proposition}
The following are equivalent
\begin{itemize}
\item[(a)] $\cM$ is sunflowerable and uncountable, 

\item[(b)] $\cM$ is strongly sunflowerable.
\end{itemize}
\end{proposition}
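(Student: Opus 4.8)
For the implication (b)$\Rightarrow$(a): suppose $\cM$ is strongly sunflowerable, i.e.\ $<\w$-sunflowerable. It is uncountable by Lemma~\ref{Countable structures and not strongly sunflowarable}. To see it is sunflowerable, fix $n \in \w$ and let $\cY \cong \cM$ with $Y \subseteq \Powerset_{n}(\Set)$. Since $\Powerset_{n}(\Set) \subseteq \Powerset_{<\w}(\Set)$, the hypothesis that $\cM$ is $<\w$-sunflowerable applies verbatim to $\cY$ and produces a structured sunflower inside it. Hence $\cM$ is $n$-sunflowerable for every $n$, i.e.\ sunflowerable.

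For the implication (a)$\Rightarrow$(b): suppose $\cM$ is sunflowerable and uncountable. In particular $\cM$ is $2$-sunflowerable, so by Proposition~\ref{2-sunfloweable implies indivisible} it is indivisible, and indivisibility is preserved under isomorphism. Let $\cY \cong \cM$ be an arbitrary copy with $Y \subseteq \Powerset_{<\w}(\Set)$; we must find a structured sunflower in $\cY$. The plan is to slice $Y$ according to the sizes of its elements: set $Y_n = \{a \in Y : |a| = n\}$ for $n \in \w$, so that $Y = \bigcup_{n \in \w} Y_n$ is a partition of $Y$ into $\w$-many pieces. Because $\cM$ is uncountable we have $\w < |Y| = |M|$, so indivisibility of $\cY$ gives some $n \in \w$ and some $Q \subseteq Y_n$ with $\cY\rest[Q] \cong \cY \cong \cM$. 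Now $\cY\rest[Q]$ is an isomorphic copy of $\cM$ whose underlying set satisfies $Q \subseteq \Powerset_{n}(\Set)$, so by $n$-sunflowerability of $\cM$ it contains a structured sunflower $\cZ$. Then $\cZ \subseteq \cY\rest[Q] \subseteq \cY$, the set $Z$ is a sunflower, and $\cZ \cong \cY\rest[Q] \cong \cY$, so $\cZ$ is a structured sunflower of $\cY$. As $\cY$ was arbitrary, $\cM$ is $<\w$-sunflowerable.

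The only genuine idea is the reduction in (a)$\Rightarrow$(b): partitioning $Y$ by the cardinalities of its members and invoking indivisibility replaces an arbitrary copy living on finite sets of varying sizes by a sub-copy living on sets of a single fixed size $n$, which is exactly the setting the given ($n$-)sunflowerability hypothesis is built for. The one point to be careful about is that this slicing partition has $\w$ parts, so that indivisibility — available through Proposition~\ref{2-sunfloweable implies indivisible} — applies precisely because $\w < |M|$; this is where uncountability is used, and it is the crux of the argument. No estimates or explicit constructions are needed beyond what is already in the excerpt.
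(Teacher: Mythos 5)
Your proof is correct and follows essentially the same route as the paper: partitioning the copy by the cardinality of its elements, applying indivisibility (obtained from Proposition \ref{2-sunfloweable implies indivisible} together with uncountability) to pass to a sub-copy of a single fixed size, and then invoking $n$-sunflowerability. Your explicit justification of (b)$\Rightarrow$(a) via $\Powerset_{n}(\Set) \subseteq \Powerset_{<\w}(\Set)$ is a detail the paper leaves implicit, but the argument is the same.
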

\begin{proof}
By Lemma \ref{Countable structures and not strongly sunflowarable} we have that (b) implies (a). 

Now suppose (a). As $\cM$ is sunflowerable, by Proposition \ref{2-sunfloweable implies indivisible} $\cM$ is indivisible. Suppose $\cM' \cong \cM$ and $M' \subseteq \Powerset_{<\w}(\Set)$. Let $c\:M' \to \w$ be such that $c(a) = |a|$. As $\cM'$ is indivisible and uncountable there must be a $M^* \subseteq M'$ such that $|c``[M^*]| = 1$ and $\cM'\rest[M^*] \cong\cM'$. But then, as $\cM^*$ is sunflowerable there must be a $M^\circ \subseteq M^*$ such that $\cM^*\rest[M^\circ] \cong \cM^*$ and $M^\circ$ is a sunflower. Therefore $\cM$ is strongly sunflowerable and (b) holds.
\end{proof}

Proposition \ref{2-sunfloweable implies indivisible} has consequences for the types of languages which admit $2$-sunflowerable structures. 

\begin{lemma}
\label{Indivisible implies no constant symbols}
Suppose $\cM$ is an infinite $\Lang$-structure which indivisible. Then $\Lang$ has no constants. In particular if $\cM$ is $2$-sunflowerable the $\Lang$ has no constants. 
\end{lemma}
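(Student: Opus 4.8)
The plan is to argue by contradiction using the defining property of a substructure. Suppose $\Lang$ contains a constant symbol $c$, and let $c^{\cM}\in M$ denote its interpretation in $\cM$. The one observation we need is that the underlying set of \emph{every} substructure $\cN\subseteq\cM$ contains $c^{\cM}$: since the inclusion is an $\Lang$-embedding it preserves constants, so $c^{\cN}=c^{\cM}\in N$. Equivalently, $\cM\rest[Q]$ is defined as a substructure of $\cM$ only when $Q$ is closed under the functions of $\Lang$ and contains the interpretation of every constant, so in particular $c^{\cM}\in Q$ whenever $\cM\rest[Q]$ makes sense.

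Now partition $M$ into the two pieces $P_0=\{c^{\cM}\}$ and $P_1=M\setminus\{c^{\cM}\}$. Both pieces are nonempty because $\cM$ is infinite, and since $|M|=\kappa$ is infinite we have $2<\kappa$, so this is a legitimate partition of $M$ into $\alpha=2<\kappa$ many parts. By indivisibility there is an $i\in\{0,1\}$ and a $Q\subseteq P_i$ with $\cM\rest[Q]\cong\cM$; in particular $|Q|=|M|$ is infinite. If $i=0$ then $Q\subseteq\{c^{\cM}\}$ is finite, a contradiction. If $i=1$ then $c^{\cM}\notin Q$, contradicting the observation above that $c^{\cM}$ lies in the underlying set of every substructure of $\cM$. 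In either case we reach a contradiction, so $\Lang$ has no constants.

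For the final ``in particular'' clause: if $\cM$ is $2$-sunflowerable then $\cM$ is infinite (by the standing hypothesis in the definition of $n$-sunflowerable) and, by Proposition~\ref{2-sunfloweable implies indivisible}, indivisible; hence the first part applies and $\Lang$ has no constants.

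There is no real obstacle here; the only point requiring care is the convention for $\cM\rest[Q]$, namely that for it to denote a substructure of $\cM$ the set $Q$ must contain the interpretations of all constants, which is precisely what rules out the piece $P_1$. The rest is just juxtaposing the definition of substructure with the definition of indivisibility applied to the two-piece partition separating $c^{\cM}$ from the rest of $M$.
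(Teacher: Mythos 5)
Your proof is correct and follows essentially the same route as the paper: partition $M$ into $\{c^{\cM}\}$ and its complement, then use indivisibility plus the fact that every substructure must contain $c^{\cM}$ (and that a singleton cannot carry a copy of an infinite structure) to reach a contradiction, with the ``in particular'' clause following from Proposition \ref{2-sunfloweable implies indivisible}. No gaps.
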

\begin{proof}
Suppose, to get a contradiction, that $c$ is a constant in $\Lang$. Let $P_0 = \{c^{\cM}\}$ and $P_1 = M \setminus P_0$. Then $\{P_0, P_1\}$ is a partition of $M$. As $\cM$ is indivisible $i \in \{0, 1\}$ and an $\cN \subseteq \cM$ such that $\cN \cong \cM$ and $N$ is contained in $P_i$. 

But as $\cN \subseteq \cM$ we have $c^{\cN} = c^{\cM}$ and so, in particular, $c^{\cM} \in N$. Therefore $N \not \subseteq P_1$. But $|P_0| = 1$ and so $N \not \subseteq P_0$, getting us our contradiction.  

The fact that $\Lang$ doesn't have constant symbols if $\cM$ is sunflowerable then follows from Proposition \ref{2-sunfloweable implies indivisible}. 
\end{proof}

We now give a property which will imply $\kappa$-indivisibility on $\kappa$-saturated structures.

\begin{definition}
Suppose $\cM$ is an $\Lang$-structure, $\aa$ is a tuple contained in $\cM$, and $a$ is an element of $\cM$ not in $\aa$. We then let $B_{\aa, a} = \{b \in \cM \st \qftp[\cM](\aa, a) = \qftp[\cM](\aa, b)\}$. 

We say $\cM$ has \defn{universal duplication of $\kappa$-quantifier-free types} if for all tuples $\aa \subseteq \cM$ of length $<\kappa$ and all $a \in \cM \setminus \aa$, there is a $C \subseteq B_{\aa, a}$ where $\cM\rest[C] \cong \cM$. 

We say $\cM$ has \defn{strong universal duplication of $\kappa$-quantifier-free types} if for all sequences $\aa \in \cM$ of length $< \kappa$ and $a \in \cM\setminus \aa$, we have $\cM\rest[B_{\aa, a}] \cong \cM$. 

If $\kappa = |M|$, then we omit mention of it. 
\end{definition}

Universal duplication of $\kappa$-quantifier-free types can been seen as a homogeneity requirement on the model. It is also worth noting that a structure having universal duplication of $\w$-quantifier-free types implies that all functions must be choice functions. 

\begin{lemma}
\label{Universal duplication of w-quantifier-free types implies choice function}
Suppose $\cM$ is an infinite $\Lang$-structure which has universal duplication of $\w$-quantifier-free types and $f$ is a function symbol in $\Lang$ of arity $n$. Then 
\[
\cM \models (\forall x_0, \dots, x_{n-1})\, \bigvee_{i \in [n]} f(x_0, \dots, x_{n-1}) = x_i. 
\]
\end{lemma}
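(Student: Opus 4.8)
The plan is to argue by contradiction, using the elementary fact that the value of a function symbol on a tuple is recorded by that tuple's quantifier-free type. Suppose the displayed sentence fails in $\cM$. Then there is a tuple $\aa = (a_0, \dots, a_{n-1}) \subseteq \cM$ such that $b \defas f^{\cM}(a_0, \dots, a_{n-1})$ satisfies $b \neq a_i$ for every $i \in [n]$. In particular $b \in \cM \setminus \aa$, so $B_{\aa, b}$ is defined.

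The key step is to observe that $B_{\aa, b}$ is a singleton. The formula $f(x_0, \dots, x_{n-1}) = x_n$ is atomic, hence quantifier-free, and it holds of $(\aa, b)$; so it is a literal belonging to $\qftp[\cM](\aa, b)$. Consequently, if $c \in B_{\aa, b}$, so that $\qftp[\cM](\aa, c) = \qftp[\cM](\aa, b)$, then $(\aa, c)$ satisfies this same literal, i.e. $f^{\cM}(a_0, \dots, a_{n-1}) = c$. Since also $f^{\cM}(a_0, \dots, a_{n-1}) = b$, we get $c = b$, and hence $B_{\aa, b} = \{b\}$.

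Finally, apply universal duplication of $\w$-quantifier-free types to the tuple $\aa$, which has length $n < \w$, and the element $b \in \cM \setminus \aa$: there is $C \subseteq B_{\aa, b}$ with $\cM\rest[C] \cong \cM$. But $B_{\aa, b} = \{b\}$ forces $|C| \leq 1$, whereas $\cM$ is infinite, so $\cM\rest[C]$ cannot be isomorphic to $\cM$ --- a contradiction. Therefore no such tuple $\aa$ exists, and the sentence holds.

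I do not expect a genuine obstacle: the only points that need a moment's attention are that $f(x_0,\dots,x_{n-1}) = x_n$ is genuinely quantifier-free (so that it is constrained by the quantifier-free type, not merely the full type), and that no structure of size at most $1$ can be isomorphic to an infinite one. Note that we use only the non-strong form of universal duplication, and in fact only that $|B_{\aa,b}| < |M|$.
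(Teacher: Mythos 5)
Your proof is correct and follows essentially the same route as the paper: assume a tuple $\aa$ with $f(\aa) \notin \aa$, note that the literal $f(x_0,\dots,x_{n-1}) = x_n$ in the quantifier-free type forces $B_{\aa, f(\aa)} = \{f(\aa)\}$, and then universal duplication of $\w$-quantifier-free types yields a copy of the infinite $\cM$ inside a singleton, a contradiction. Your write-up is slightly more explicit than the paper's about why $B_{\aa, f(\aa)}$ is a singleton, but the argument is the same.
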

\begin{proof}
Suppose, to get a contradiction, there is a tuple $\aa = (a_0, \dots, a_{n-1}) \subseteq \cM$ such that $f(a_0, \dots, a_{n-1})  \in \cM \setminus \{a_0,\dots, a_{n-1}\}$. Then, as $\cM$ has universal duplication of $\w$-quantifier-free types, there is a $N \subseteq B_{\aa, f(\aa)}$ with $\cN \cong \cM$. But $B_{\aa, f(\aa)} = \{b \st f(\aa) = b\} = \{f(\aa)\}$. In particular,  $|B_{\aa, f(\aa)}| = 1$ getting us our contradiction.  
\end{proof}

The following gives a basic condition when universal duplication of $\kappa$-quantifier-free types is the same as strong universal duplication of $\kappa$-quantifier-free types. 

\begin{definition}
\label{Definition 3-amalgamation of quantifier-free types}
We say a structure $\cM$ has \defn{$3$-amalgamation of quantifier-free types} if whenever $p(\xx, \yy)$, $q(\yy, \zz), r(\zz, \xx)$ are quantifier-free types realized in $\cM$ such that 
\begin{itemize}
\item $p(\xx, \yy) \rest[\xx] = r(\zz, \xx) \rest[\xx]$,

\item $p(\xx, \yy) \rest[\yy] = q(\yy, \zz) \rest[\yy]$,

\item $q(\yy, \zz) \rest[\zz] = r(\zz, \xx) \rest[\zz]$,

\end{itemize}
there is a quantifier-free type $a(\xx, \yy, \zz)$ realized in $\cM$ such that 
\begin{itemize}
\item $a(\xx, \yy, \zz) \rest[\xx\yy] = p(\xx, \yy)$, 

\item $a(\xx, \yy, \zz) \rest[\yy\zz] = q(\yy, \zz)$, 

\item $a(\xx, \yy, \zz) \rest[\zz\xx] = p(\zz, \xx)$. 
\end{itemize}
\end{definition}

$3$-amalgamation of quantifier-free types is a strengthening of the amalgamation property and, in general, there are notions of $n$-amalgamation for all $n$.  These are closely related to the property \DAP(n)\ of an age defined in Definition \ref{Age properties}. 

For our purposes we will be focused on saturated structures which admit quantifier-elimination (and hence are ultrahomogeneous). In this situation $3$-amalgamation implies strong universal duplication of quantifier-free types. 

\begin{lemma}
\label{Saturated quantifier-elimination and 3-amalgamation implies strong universal duplication of qf-types}
Suppose $\cM$ is $|M|$-saturated, has quantifier-elimination, and has $3$-amalgamation of quantifier-free types. Then $\cM$ has strong universal duplication of quantifier-free types. 
\end{lemma}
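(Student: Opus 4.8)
The goal is to show that if $\cM$ is $|M|$-saturated, has quantifier-elimination, and has $3$-amalgamation of quantifier-free types, then for every tuple $\aa \subseteq \cM$ of length $<|M|$ and every $a \in \cM \setminus \aa$, the substructure $\cM\rest[B_{\aa,a}]$ is isomorphic to $\cM$. Since $\cM$ is $|M|$-saturated and has quantifier-elimination, it is ultrahomogeneous, and it suffices to show that $\cM\rest[B_{\aa,a}]$ is itself $|M|$-saturated and realizes exactly the same quantifier-free types over the empty set as $\cM$ (a back-and-forth of the usual kind then produces the isomorphism — one should be slightly careful here about the fact that $B_{\aa,a}$ may have the same cardinality as $M$, so one runs the back-and-forth between two $|M|$-saturated models of the same size with the same quantifier-free type structure).

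**Key steps, in order.** First I would fix $\aa = (a_0,\dots,a_{k-1})$ with $k<|M|$ and $a \in \cM \setminus \aa$, and let $N = B_{\aa,a}$, $\cN = \cM\rest[N]$. Step 1: show $\cN$ realizes every quantifier-free type (in finitely many or $<|M|$ variables) that is realized in $\cM$. Given a quantifier-free type $t(\zz)$ with $|\zz| < |M|$ realized in $\cM$, I want to find a realization all of whose coordinates lie in $N$, i.e.\ each coordinate $c_i$ satisfies $\qftp[\cM](\aa,c_i) = \qftp[\cM](\aa,a)$. This is exactly where $3$-amalgamation of quantifier-free types enters: let $p(\zz)=t(\zz)$, let $q$ be the quantifier-free type asserting that each of $|\zz|$-many new variables duplicates $a$ over $\aa$, i.e.\ $q$ is built so that $q\rest[\aa, z_i]=\qftp[\cM](\aa,a)$ for each $i$ and $q\rest[\zz]$ is compatible with $p$; and let $r$ record the type of $\aa$ together with $a$ (or with the $\zz$-tuple sitting on top of $a$'s orbit). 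I then apply $3$-amalgamation — iterated or in one shot, depending on how one sets it up — to amalgamate "$\aa$ with $a$" over each coordinate $z_i$ while keeping the $\zz$-coordinates in the configuration $p$, obtaining a consistent quantifier-free type in the variables $\aa \zz$ that restricts to $p$ on $\zz$ and to $\qftp[\cM](\aa,a)$ on each $\aa z_i$; by $|M|$-saturation (and quantifier-elimination so that quantifier-free consistency suffices) this type is realized in $\cM$, and its $\zz$-part is a realization of $t$ inside $N$. Step 2: show $\cN$ is $|M|$-saturated. Given $\dd \subseteq N$ of length $<|M|$ and a quantifier-free type $s(z)$ over $\dd$ consistent with $\qftp[\cM](\dd)$, I need a realization in $N$. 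By $|M|$-saturation of $\cM$ there is $c \in \cM$ realizing $s$; now I need to move $c$ into $N$, i.e.\ find $c' \in N$ with $\qftp[\cM](\dd,c')=\qftp[\cM](\dd,c)$. Again invoke $3$-amalgamation to amalgamate the type of $(\dd,c)$ with the type "$\dd c''$ duplicates $\aa a$ appropriately over $\aa$" — more precisely amalgamate over the common part $\dd$ (which already lies in $N$, so each $d_j$ already duplicates $a$ over $\aa$) to land $c$ in $N$ while preserving $\qftp[\cM](\dd,c)$; then saturation realizes the amalgam and gives $c' \in N$. Step 3: conclude by back-and-forth between $\cM$ and $\cN$, both $|M|$-saturated, both with quantifier-elimination, and (by Step 1 and the trivial inclusion $\cN \subseteq \cM$) realizing exactly the same quantifier-free types, hence isomorphic.

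**The main obstacle.** The delicate point is Step 1/Step 2: setting up the $3$-amalgamation instances so that the hypotheses of Definition \ref{Definition 3-amalgamation of quantifier-free types} are literally met — in particular matching up the pairwise restrictions $p\rest[\xx]=r\rest[\xx]$ etc. — and in handling tuples $\zz$ of length $>2$, which forces either an iteration of $3$-amalgamation (amalgamating one coordinate at a time, and checking the iteration stays consistent) or a reduction to the $3$-variable case. One must be careful that the quantifier-free type one writes down is actually \emph{consistent} (realized in $\cM$): this is where quantifier-elimination plus $|M|$-saturation is used, since it reduces "consistent" to "every finite subtype is realized", and $3$-amalgamation supplies the finite pieces. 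The rest — ultrahomogeneity from saturation plus quantifier-elimination, and the back-and-forth — is routine.
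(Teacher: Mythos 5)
Your proposal is correct and is essentially the paper's own argument: the paper likewise reduces the lemma to showing that $\cM\rest[B_{\aa,a}]$ realizes the same quantifier-free types as $\cM$ and has the quantifier-free extension (saturation) property — packaged there as an axiomatization by sentences $\eta_i, \gamma_{i,j}$ together with a back-and-forth and argued by contradiction — and its one essential use of $3$-amalgamation is exactly your Step 2, amalgamating $\qftp[\cM](\aa,\bb)$, $\qftp[\cM](\bb,c)$ and $\qftp[\cM](\aa,a)$ and then invoking $|M|$-saturation (with quantifier-elimination) to duplicate the new element over $\bb \subseteq B_{\aa,a}$ inside $B_{\aa,a}$. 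The only difference is organizational (direct verification of saturation of the substructure versus extracting a failing extension instance from a hypothesized non-isomorphism), and the delicate point you flag about matching restrictions such as $q\rest[y]=r\rest[y]$ is equally present, and equally implicit, in the paper's proof.
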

\begin{proof}
Let $|M| = \kappa$ and let $(p_i(\xx_i))_{i \in \kappa}$ be an enumeration of quantifier-free types realized in $\cM$ over tuples of size $<\kappa$. For $i \in \kappa$ let $\eta_{i} = (\exists \xx_i)\, \bigwedge p_i(\xx_i)$. Let $I = \{(i, j) \in \kappa^2 \st p_i(\xx_i) \subseteq q_j(\xx_j) \And \xx_j = \xx_i y_j\}$. For $(i, j) \in I$ let $\gamma_{i, j} = (\forall \xx_i)\, [\bigwedge p_i(\xx_i) \rightarrow (\exists y_j)\, \bigwedge q_j(\xx_iy_j)]$. 

Note as $\cM$ is saturated with quantifier-elimination $\cM \models \bigwedge_{i \in \kappa} \eta_i$ and $\cM \models \bigwedge_{(i, j) \in I} \gamma_{i, j}$. Further note that a standard back-and-forth argument shows that if $|\cN| = |\cM|$ and $\bigwedge_{i \in \kappa} \eta_i$ and $\bigwedge_{(i, j) \in I} \gamma_{i, j}$ then $\cN \cong \cM$. 

Suppose to get a contradiction that $\cM$ does not have strong universal duplication of quantifier-free types. Pick some $\aa, a$ such that $\cM \rest[B_{\aa, a}]$ is not isomorphic to $\cM$. Let $r = \qftp[\cM](\aa, a)$. 

There must then be some $\bb \in B_{\aa, a}$ and quantifier-free type $q(\xx, y)$ such that $\cM \models (\exists y)\, q(\bb, y)$ but $\cM \not \models (\exists y)\, r(\aa, y) \And q(\bb, y)$. But if $s = \qftp[\cM](\aa,\bb)$ then this contradicts the $3$-amalgamation of $q, r, s$. 
\end{proof}

Strong universal duplication of quantifier-free types is important as, in saturated models (of regular cardinal size) with quantifier-elimination, it implies indivisibility.

\begin{proposition}
\label{Strong universal duplication of qf types implies indivisible}
Suppose 
\begin{itemize}
\item $|M|$ is a regular cardinal, 

\item $\cM$ is a $|M|$-saturated structure with quantifier-elimination, 

\item $\cM$ has strong universal duplication of quantifier-free types. 
\end{itemize}
Then $\cM$ is indivisible. 
\end{proposition}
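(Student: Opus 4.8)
The plan is to prove directly that every partition $(P_i)_{i<\alpha}$ of $M$ with $\alpha<|M|$ has a piece $P_i$ containing a copy of $\cM$ (a substructure isomorphic to $\cM$); call $S\subseteq M$ \emph{deficient} if it contains no copy of $\cM$. Two facts carry the argument. The first is a two-piece statement: \emph{if $M=P_0\sqcup P_1$ and $P_0$ is deficient, then $P_1$ is not.} Indeed, a routine back-and-forth — the one in the proof of Lemma~\ref{Saturated quantifier-elimination and 3-amalgamation implies strong universal duplication of qf-types}, now required to stay inside $P_0$ — shows that if $P_0$ contained no copy of $\cM$ there would be a tuple $\ee\subseteq P_0$ of length $<|M|$ and an $a\in M$ such that no element of $P_0$ has the same quantifier-free type over $\ee$ as $a$; then $a\notin P_0$, so $a\notin\ee$, and $B_{\ee,a}\subseteq M\setminus P_0=P_1$, while $\cM\rest[B_{\ee,a}]\cong\cM$ by strong universal duplication. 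Running the identical argument inside any copy of $\cM$ gives the \emph{shrinking step}: whenever $\cM\rest[D]\cong\cM$ and $C\subseteq D$ is deficient, there are $\ee\subseteq D\cap C$ of length $<|M|$ and $b\in D\setminus C$ with $b\notin\ee$ such that $B_{\ee,b}\cap D$ is a copy of $\cM$ contained in $D$ and disjoint from $C$; moreover, if this is done extending a tuple $\bb$ already in hand (with $D$ avoiding $\bb$ and $b$ realizing $\qftp[\cM](\bb,\cdot)$'s prescribed extension), then $B_{(\bb,\ee),b}$ is still a copy of $\cM$, is contained in $D\cap(M\setminus C)$, and is cut out by the complete quantifier-free type $\qftp[\cM](\bb,\ee,b)$, which extends $\qftp[\cM](\bb,b)$.

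The second fact is the heart: \emph{no copy of $\cM$ is an increasing union of fewer than $|M|$ deficient sets.} Suppose $M=\bigcup_{\gamma<\lambda}C_\gamma$ with $\lambda<|M|$, the $C_\gamma$ increasing and each deficient; necessarily $\lambda$ is a limit (else the last $C_\gamma$ equals $M$, which is not deficient). By recursion on $\gamma<\lambda$ I build increasing tuples $\bb_\gamma$ of length $<|M|$ and coherent complete quantifier-free types $q_\gamma$ in the variables of $\bb_\gamma$ together with one further variable $y$, each realized in $\cM$ and implying $y\notin\bb_\gamma$, so that $D_\gamma:=\{b\in M:(\bb_\gamma,b)\text{ realizes }q_\gamma\}$ is a copy of $\cM$ (automatic by strong universal duplication, since $q_\gamma$ is realized and implies $y\notin\bb_\gamma$) and is disjoint from $C_{\gamma+1}$. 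At $\gamma+1$, apply the shrinking step inside $\cM\rest[D_\gamma]$ to the deficient set $D_\gamma\cap C_{\gamma+2}$, producing $\bb_{\gamma+1}=(\bb_\gamma,\ee)$, $q_{\gamma+1}=\qftp[\cM](\bb_\gamma,\ee,b)$, $D_{\gamma+1}\subseteq D_\gamma$ disjoint from $C_{\gamma+2}$. At a limit $\delta$, first set $q_\delta^-=\bigcup_{\gamma<\delta}q_\gamma$ over $\bb_\delta^-=\bigcup_{\gamma<\delta}\bb_\gamma$: this is a consistent complete quantifier-free type over fewer than $|M|$ parameters (regularity of $|M|$ keeps $|\bb_\delta^-|<|M|$), hence realized by $|M|$-saturation, so $D_\delta^-$ is a copy of $\cM$ disjoint from $C_\delta$; then apply one shrinking step to $D_\delta^-\cap C_{\delta+1}$ to get $\bb_\delta,q_\delta,D_\delta$ as required. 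Finally $q_\lambda=\bigcup_{\gamma<\lambda}q_\gamma$ is, again by regularity and coherence, a consistent complete quantifier-free type over fewer than $|M|$ parameters, so $|M|$-saturation realizes it by some $b^*\in M$; but then $b^*\in D_\gamma\subseteq M\setminus C_{\gamma+1}$ for every $\gamma<\lambda$, whence $b^*\notin\bigcup_{\gamma<\lambda}C_{\gamma+1}=M$ — a contradiction.

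Granting the two facts, the proposition follows: let $(P_i)_{i<\alpha}$ partition $M$ with $\alpha<|M|$, suppose for contradiction every $P_i$ deficient, put $C_\delta=\bigcup_{i<\delta}P_i$, and let $\delta^*\le\alpha$ be least with $C_{\delta^*}$ not deficient (it exists since $C_\alpha=M$ is not deficient, and $\delta^*\ne0$). If $\delta^*=\gamma+1$, pick a copy $Q$ of $\cM$ inside $C_\gamma\cup P_\gamma$ and apply the two-piece fact inside $\cM\rest[Q]$ to $Q=(Q\cap C_\gamma)\sqcup(Q\cap P_\gamma)$: one side contains a copy of $\cM$, contradicting deficiency of $C_\gamma$ (minimality of $\delta^*$) or of $P_\gamma$. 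If $\delta^*$ is a limit, pick a copy $Q$ of $\cM$ inside $C_{\delta^*}=\bigcup_{\delta<\delta^*}C_\delta$; then $\cM\rest[Q]$ is an increasing union of the fewer-than-$|M|$ deficient sets $Q\cap C_\delta$, contradicting the second fact. Hence some $P_i$ contains a copy of $\cM$, so $\cM$ is indivisible.

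The main obstacle is the limit stage of the construction in the second fact. A naive iteration of the shrinking step would, after $\omega$ rounds, leave only a decreasing intersection of copies of $\cM$, which need not be a copy of $\cM$; what makes the recursion go through is carrying along the \emph{coherent tower of quantifier-free types} rather than the sets $D_\gamma$, so that $|M|$-saturation can realize all stages simultaneously, with regularity of $|M|$ ensuring every parameter set stays small enough for saturation to apply. Everything else is bookkeeping of the same flavor as in the proof of Lemma~\ref{Saturated quantifier-elimination and 3-amalgamation implies strong universal duplication of qf-types}.
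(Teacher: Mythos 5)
Your proof is correct, and it runs on the same engine as the paper's, though packaged differently enough to be worth comparing. The paper proves indivisibility in a single transfinite induction along the pieces $(P_i)_{i\in\zeta}$ of the partition, carrying at each stage a tuple $\bb_\lambda$, a type $r_\lambda$ omitted on $P_\lambda$, and a complete quantifier-free type $u_\lambda$ whose realization set $D_\lambda$ is a copy of $\cM$; your $q_\gamma$ and $D_\gamma$ are exactly these $u_\lambda$ and $D_\lambda$, your shrinking step is the paper's Subcase~2 (back-and-forth plus strong universal duplication), and your limit treatment via unions of coherent types and regularity is the paper's Case $\lambda=\w\cdot\delta$. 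Where you genuinely diverge is in the organization and the endgame: you factor the statement into a two-piece lemma and an increasing-union lemma, reduce the general partition by a least-non-deficient-initial-segment argument, and close by realizing the union of the coherent complete types $q_\gamma$, so the witness lies in $\bigcap_\gamma D_\gamma$ and hence outside $\bigcup_\gamma C_{\gamma+1}=M$; the paper instead realizes $\bigcup_{i\in\zeta}r_i(\bb_i,y)$ and needs the invariant that no $y\in P_\lambda$ satisfies $\bigwedge_{i\le\lambda}r_i(\bb_i,y)$, which in Subcase~2 requires $r_{\alpha+1}$ to be omitted in all of $P_{\alpha+1}$ rather than merely in $P_{\alpha+1}\cap D_\alpha$ (the latter being what the back-and-forth inside $\cM\rest[D_\alpha]$ directly yields). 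Your complete-type bookkeeping sidesteps that delicacy, at the cost of an extra reduction layer that the paper's direct induction avoids. Two small points to tidy: spell out the base stage $\gamma=0$ (the shrinking step applied to $C_1$ inside $\cM$ with empty prior tuple), and note that the parenthetical claim that $D_\delta^-$ is disjoint from $C_\delta$ uses continuity of the chain at $\delta$ (true in your application where $C_\delta=\bigcup_{i<\delta}P_i$, and in any case unnecessary, since the invariant $D_\gamma\cap C_{\gamma+1}=\emptyset$ is all the final contradiction uses).
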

\begin{proof}
Let $|M| = \kappa$. Suppose, to get a contradiction, that $\cP$ is a partition of $M$ with $|\cP| < \kappa$ such that no element of $\cP$ contains a copy of $\cM$ as a subset. Let $(P_i)_{i \in \zeta}$ be an enumeration of $\cP$ where $|\cP| = \zeta$.  Suppose $(a_i)_{i \in \kappa}$ is an enumeration of $\cM$. We now define by induction on $\lambda < \zeta$ a sequence of elements $(\bb_i)_{i \in \lambda}$, a sequence of quantifier-free types $(r_i(\zz_i, t_i))_{i \in \lambda}$, and a sequence of quantifier-free types $(u_i(\vv_i, s_i))_{i \in \lambda}$ such that 
\begin{itemize}
\item $|\bb_\lambda| < \kappa$, 

\item $\bb_\lambda \in P_\lambda$, 

\item for $i < \lambda$ and each element of $c$ in $\bb_\lambda$ we have $\cM \models r_i(\bb_i, c)$, 
                  
\item $\cM \models (\forall y \in P_\lambda) \neg \bigwedge_{i \leq \lambda} r_i(\bb_i, y)$,  

\item $\cM \models (\forall \vv_\lambda, y)\, u_\lambda(\<\bb_i\>_{i\in \lambda}, y) \rightarrow \bigwedge_{i \leq \lambda} r_i(\bb_i,y)$

\item $\cM \models (\exists y)\, u_\lambda(\<\bb_i\>_{i\in \lambda}, y)$.  

\end{itemize}
For all $\lambda < \zeta$ let $D_\lambda = \{y \in \cM \st u_\lambda(\<\bb_i\>_{i\in \lambda}, y)\}$. \nl\nl
\ul{Case $\lambda = 0$:} \nl
As $P_0$ does not contain an isomorphic copy of $\cM$ there must be a tuple $\bb_0 \in P_0$ with $|\bb_0| < \kappa$ and a quantifier-free type $r_0(\xx, y)$ such that $\cM \models (\exists y)\, \bigwedge r_0(\bb_0, y)$ but $\cM \models (\forall y \in P_0)\, \neg \bigwedge r_0(\bb_0, y)$. Let $u_0 = r_0$. Note $D_0 = \{y \in \cM \st \cM \models \bigwedge u_0(\bb_0, y)\} \neq \emptyset$.  \nl\nl 
\ul{Case $\lambda = \alpha+1$:} \nl
By our inductive assumption, there must be an element $y \in D_\alpha$ and hence a $y \in \cM$ such that $\cM \models \bigwedge_{i \leq \alpha} r_i(\bb_i, y)$. Note, by the inductive hypothesis, for all $i \leq \alpha$, $P_i \cap D_\alpha = \emptyset$.  We now break into two subcases. \nl\nl
\ul{Subcase 1:} $D_{\alpha} \cap P_{\alpha+1} = \emptyset$.\nl
In this case let $\bb_{\alpha+1}$ be the empty string and $r_{\alpha+1}(y)$ be any unary quantifier-free type realized in $\cM$. Further let $u_{\alpha+1} = u_\alpha$.  \nl\nl
\ul{Subcase 2:} $D_\alpha \cap P_{\alpha+1} \neq \emptyset$.\nl 
As $\cM$ has strong universal duplication of quantifier-free types, $\cM \rest[D_\alpha] \cong \cM$. But $P_{\alpha+1}$ does not contain a copy of $\cM$. Therefore there must be a tuple $\bb_{\alpha+1}$ of size $<\kappa$ in $P_{\alpha+1} \cap D_\alpha$ and a quantifier-free type $r_{\alpha+1}(\bb_{\alpha+1}, y)$ which is realized in $\cM\rest[D_\alpha]$, say by an element $a$, but not realized in $P_{\alpha+1}$. In particular, $\bb_{\alpha+1}$ and $r_{\alpha+1}$ satisfy the inductive hypothesis. We therefore let $u_{\alpha+1}$ be the quantifier-free type of $\<\bb_i\>_{i \in \alpha +1}\^a$. 
\nl\nl 
\ul{Case $\lambda = \w \cdot \delta$:}\nl 
Let $u^*(y) = \bigcup_{i \in \w \cdot \delta} u_i(\<\bb_j\>_{j \leq i}, y)$. Note $u^*(y)$ is finitely consistent and so it is consistent. Further, as $|\bb_i| < \kappa$, $\w \cdot \delta < \kappa$ and $\kappa$ is regular we have $|\bigcup_{i < \w \cdot \delta} \bb_i| < \kappa$. Therefore $\cM \models (\exists y)\, u^*(y)$ as $\cM$ is $\kappa$-saturated. But this implies that $\bigcap_{i < \w \cdot \delta}D_{i} \neq \emptyset$. We now break into two subcases. \nl\nl
\ul{Subcase 1:} $\bigcap_{i < \w \cdot \delta} D_i \cap P_{\w \cdot \delta} = \emptyset$.\nl
In this case let $\bb_{\w \cdot \delta}$ be the empty sequence and $r_{\w \cdot \delta}(y)$ be any unary type realized in $\cM$ and $u_{\w \cdot \delta}(\vv_{\w \cdot \delta}, y) = \bigcup{i \in \w \cdot \delta} u_i(\vv_i, y)$. \nl\nl
\ul{Subcase 2:} $(\bigcap_{i < \w \cdot \delta} D_i) \cap P_{\alpha+1} \neq \emptyset$.\nl 
Note $\bigcap_{i < \w \cdot \delta} D_i = \{y \in \cM \st \cM \models u^*(y)\}$. So, $\cM$ has strong universal duplication of quantifier-free types, $\cM \rest[\bigcap_{i < \w \cdot \delta} D_i] \cong \cM$. But $P_{\w \cdot \delta}$ does not contain a copy of $\cM$. Therefore there must be a tuple $\bb_{\w \cdot \delta}$ in $P_{\w \cdot \delta} \cap (\bigcap_{i < \w \cdot \delta} D_i)$ and a quantifier-free type $r_{\w \cdot \delta}(\bb_{\w \cdot \delta}, y)$ which is realized in $\cM\rest[\bigcap_{i < \w \cdot \delta} D_i]$, by some tuple $a$, but not realized in $P_{\w \cdot \delta}$. In particular $\bb_{\w \cdot \delta}$,  $r_{\w \cdot \delta}$ and $u_{\w \cdot \delta}$ satisfy the inductive hypothesis.\nl\nl
Finally, note that the set $\bigcup_{i \in \zeta} r_{i}(\bb_i, y)$ is finitely consistent and $|\bigcup_{i \in \zeta} \bb_i| < \kappa$. Therefore, as $\cM$ is $\kappa$-saturated, there is a $y \in \cM$ such that $\cM \models \bigwedge_{i \in \zeta} r_i(\bb_i, y)$. But then for all $i \in \zeta$, $y \not \in P_i$, contradicting the fact that $\cP$ is a partition of $\cM$. 
\end{proof}

We now give a condition which will ensure being sunflowerable. This allows us to show that many well-known structures, such as the rational linear ordering and the Rado graph are sunflowerable.

\begin{theorem}
\label{Theorem showing when structure is sunflowerable}
Suppose 
\begin{itemize}

\item $\cM$ is an $\Lang$-structure, 

\item $|M|$ is an infinite regular cardinal

\item $\cM$ is indivisible, ultrahomogeneous, and has universal duplication of quantifier-free types. 
\end{itemize}
Then $\cM$ is sunflowerable. 
\end{theorem}
\begin{proof}
First note by Lemma \ref{Indivisible implies no constant symbols} and Lemma \ref{Universal duplication of w-quantifier-free types implies choice function} if $M_0 \subseteq M$ then $\cM\rest[M_0]$ is a substructure of $\cM$, i.e. $\cM$ restricted to a subset always gives rise to a substructure.

Suppose $n \in \w$, $Y \subseteq \Powerset_{n}(\Set)$ and $\cY$ is an $\Lang$-structure with underlying set $Y$ and $\cY \cong \cM$. As $|Y| = \kappa$ we can assume without loss of generality that $\bigcup Y \subseteq \kappa$ as well. 

We will now prove, by induction on $n$, that there is $Z \subseteq Y$ such that $\cY\rest[Z] \cong \cM$ and $Z$ is a sunflower.  \nl\nl
\ul{Case $n= 1$:} \nl
In this case $y_0 \cap y_1 = \emptyset$ for all distinct $y_0, y_1 \in Y$ and hence $Y$ is the desired sunflower. \nl\nl
\ul{Case $n = k+1$:}\nl
Suppose to get a contradiction that there is no $Z \subseteq Y$ which is a sunflower with $\cY\rest[Z] \cong \cM$. 

For each $x \in \kappa$ let $Y_x^\circ = \{y \in Y \st x \in y\}$. Let $Y_x = \{y\setminus \{x\} \st y \in Y_x^\circ\}$. Note $Y_x \subseteq \Powerset_{k}(\Set)$ and the map $u_x\:Y_x \to Y_x^{\circ}$ with $u_x(y) = y\cup \{x\}$ is a bijection. We let $\cY_x$ be the $\Lang$-structure on $Y_x$ such that for all atomic formulas $\psi$ of arity $\ell$ and all $z_0, \dots, z_{\ell-1} \in \cY_x$,
\[
\cY_x \models \psi(z_0, \dots, z_{\ell-1}) \quadiff \cY \models \psi(u_x(z_9), \dots, u_x(z_{\ell-1})). 
\]

If there is an $x \in \kappa$ and $C_x \subseteq Y_x$ is such that $\cY_x \rest[C_x] \cong \cM$, then by the inductive hypothesis applied to $\cY_x \rest[C_x]$ there is a $Z_x\subseteq C_x$ such that $\cY_x \rest[Z_x] \cong \cM$ and $Z_x$ is a sunflower. But then $u_x``[Z_x] \subseteq Y$ is a sunflower and $\cY\rest[{u_x``[Z]}] \cong \cM$ contradicting our assumption. 

It therefore suffices to consider the case when, for all $x \in X$, $Y_x$ contains no substructure isomorphic to $\cM$. 

Let $\aa = (a_i)_{i \in \kappa}$ be an enumeration of $\cM$. We define by induction the sequence $(z_i)_{i \in \kappa} \subseteq Y$ where 
\begin{itemize}
\item for all $\alpha \in \kappa$, $\qftp[\cM](\<z_i\>_{i \in \alpha}) = \qftp[\cM](\<a_i\>_{i \in \alpha})$, and

\item for all $i < j \in \kappa$, $z_i \cap z_j = \emptyset$.
\end{itemize}
\ul{Stage $0$:}\nl
Let $z_0 = a_0$. \nl\nl
\ul{Stage $\alpha$:}\nl
Let $r_\alpha = \sup \bigcup_{j < \alpha} z_j$. Note as $|z_i| = n$ for all $i \in \alpha$, and $|\alpha| < \kappa$ so $n \cdot \alpha < \kappa$ and hence, as $\kappa$ is an infinite regular cardinal, $r_\alpha < \kappa$.  Let $C^\circ_{\alpha} = \{b \in \cY \st \qftp[\cY](\<z_i\>_{i \in \alpha}, b) = \qftp[\cM'](\<a_i\>_{i \in \alpha}, a_{\alpha})\}$. Note $C^\circ_\alpha \neq \emptyset$ as $\cY$ is ultrahomogenerous. 

Let $C_{\alpha} \subseteq C_{\alpha}^\circ$ be such that $\cY\rest[C_{\alpha}] \cong \cY$. Note we can always find such a $C_{\alpha}$ as $\cY$ has universal duplication of quantifier-free types. 

Now suppose $C_{\alpha} \subseteq \bigcup_{i \leq r_\alpha} Y_{i}^\circ$. Then, as $\cY$ is indivisible, there must be an $j \leq r_{\alpha}$ and a $D_{\alpha} \subseteq C_{\alpha} \cap Y^\circ_{j}$ where $D_{\alpha} \cong \cY$. But then $u_j^{-1}(D_{\alpha}) \subseteq \cY_j$ and $u_j^{-1}(D_{\alpha}) \cong \cY$. Therefore by the inductive hypothesis applied to $u_j^{-1}(D_{\alpha})$ there is a $E_{\alpha} \subseteq u_j^{-1}(D_{\alpha})$ which is a sunflower and where $\cY_j \rest[E_{\alpha}] \cong \cY$. Therefore $u_j``[E_{\alpha}] \subseteq Y$ is a sunflower and $\cY\rest[u_j``[{E_{\alpha}]}] \cong \cY$. 

But we assumed no such subset existed. Therefore there must be some $z_{\alpha} \in C_{\alpha} \setminus \bigcup_{i \leq r_\alpha} Y_{i}^\circ$. Hence for all $i \leq r_\alpha$ we have $i \not \in z_{\alpha}$  as $z_\alpha \not \in Y_i$. Therefore, for all $j < \alpha$, $z_j \cap z_{\alpha} = \emptyset$.

Let $Z = \{z_i\}_{i \in \kappa}$. By construction the map $z_i \mapsto a_i$ is an isomorphism from $\cY\rest[Z] \to \cM$. Further as $z_i \cap z_j = \emptyset$ for all $i < j < \kappa$ we have $Z$ is a sunflower.  But this contradicts our assumption that no such $Z$ exist. 
\end{proof}

We also have the following corollaries. 
\begin{corollary}
\label{Corollary showing when structure is sunflowerable from strong universal duplication of qf types}
Suppose 
\begin{itemize}

\item $\cM$ is an $\Lang$-structure with $|M|$ a regular cardinal, 

\item $\cM$ is an $|M|$-saturated structure with quantifier-elimination, and

\item $\cM$ has strong universal duplication of quantifier-free types. 
\end{itemize}
Then $\cM$ is sunflowerable. 
\end{corollary}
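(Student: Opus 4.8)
The plan is to deduce this corollary directly from Theorem \ref{Theorem showing when structure is sunflowerable} by verifying that its three structural hypotheses on $\cM$ — indivisibility, ultrahomogeneity, and universal duplication of quantifier-free types — all follow from the hypotheses of the corollary. (We may assume $M$ is infinite, since otherwise there is nothing to prove, and $|M|$ is a regular cardinal by hypothesis, so the cardinality requirement of the theorem is met.)

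First, universal duplication of quantifier-free types is immediate from strong universal duplication: for any tuple $\aa \subseteq \cM$ of length $<|M|$ and any $a \in \cM\setminus\aa$, one simply takes $C = B_{\aa,a}$ itself, and $\cM\rest[B_{\aa,a}]\cong\cM$ by assumption. Second, $\cM$ is ultrahomogeneous: this is the standard consequence of $|M|$-saturation together with quantifier-elimination. Given tuples $\aa,\bb\subseteq\cM$ of size $<|M|$ with $\qftp[\cM](\aa)=\qftp[\cM](\bb)$ and an element $a\in\cM$, let $r(\xx,y)=\qftp[\cM](\aa,a)$ and consider the partial type $r(\bb,y)$ over $\bb$. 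Since $\aa$ and $\bb$ realize the same quantifier-free type and $\cM$ has quantifier-elimination, they realize the same complete type, so every finite fragment of $r(\bb,y)$ is satisfied in $\cM$; as $|\bb|<|M|$ and $\cM$ is $|M|$-saturated, $r(\bb,y)$ is realized by some $b\in\cM$, and then $\qftp[\cM](\aa,a)=\qftp[\cM](\bb,b)$ as required.

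Third, $\cM$ is indivisible: this is exactly the content of Proposition \ref{Strong universal duplication of qf types implies indivisible}, whose hypotheses ($|M|$ a regular cardinal, $\cM$ being $|M|$-saturated with quantifier-elimination, and $\cM$ having strong universal duplication of quantifier-free types) coincide with those assumed here. With indivisibility, ultrahomogeneity, and universal duplication of quantifier-free types all established — and $|M|$ an infinite regular cardinal — Theorem \ref{Theorem showing when structure is sunflowerable} applies and gives that $\cM$ is sunflowerable.

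I do not anticipate a genuine obstacle: this is a packaging corollary that simply recombines the preceding results. The only step that is not handed over verbatim by an earlier statement is the derivation of ultrahomogeneity from saturation plus quantifier-elimination, and that argument is entirely routine (a one-line type-extension argument using saturation).
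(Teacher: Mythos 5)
Your proposal is correct and follows exactly the paper's route: indivisibility via Proposition \ref{Strong universal duplication of qf types implies indivisible}, ultrahomogeneity from saturation plus quantifier-elimination, and then Theorem \ref{Theorem showing when structure is sunflowerable}, with strong universal duplication trivially yielding universal duplication. You simply spell out the details the paper leaves as ``immediate.''
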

\begin{proof}
This follows immediately from Proposition \ref{Strong universal duplication of qf types implies indivisible},  Theorem \ref{Theorem showing when structure is sunflowerable} and the fact that saturated structures with quantifier-elimination are ultrahomogeneous. 
\end{proof}

\begin{corollary}
\label{Corollary showing when structure is sunflowerable from 3-DAP}
Suppose 
\begin{itemize}

\item $\cM$ is an $\Lang$-structure with $|M|$ regular, 

\item $\cM$ is an $|M|$-saturated structure with quantifier-elimination, 

\item $\cM$ has $\DAP(3)$.   
\end{itemize}
Then $\cM$ is sunflowerable. 
\end{corollary}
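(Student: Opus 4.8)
The plan is to deduce this from Corollary~\ref{Corollary showing when structure is sunflowerable from strong universal duplication of qf types} by showing that, for an $|M|$-saturated structure with quantifier-elimination, the hypothesis $\DAP(3)$ is exactly what is needed to obtain $3$-amalgamation of quantifier-free types in the sense of Definition~\ref{Definition 3-amalgamation of quantifier-free types}.

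First I would unwind the definitions. Since $\cM$ has quantifier-elimination, a quantifier-free type realized in $\cM$ over a tuple is determined by the isomorphism type of the finite substructure it generates together with the labelling of that tuple; hence a compatible triple $p(\xx,\yy)$, $q(\yy,\zz)$, $r(\zz,\xx)$ as in Definition~\ref{Definition 3-amalgamation of quantifier-free types} corresponds exactly to a triple of finite structures in the age of $\cM$, pairwise amalgamated over common faces — which is precisely the input data of $\DAP(3)$. Applying $\DAP(3)$ produces a finite structure in the age realizing a quantifier-free type $a(\xx,\yy,\zz)$ with the prescribed restrictions, the disjointness clause of $\DAP$ being what guarantees that the three faces are identified correctly and that no extra relations are imposed on the union. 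Finally, since $\cM$ is $|M|$-saturated with quantifier-elimination it is ultrahomogeneous, so any finite structure in its age embeds into $\cM$ extending a prescribed embedding of a substructure; applying this to the amalgam over the already-placed copy of $\xx\yy$ realizing $p$ shows that $a(\xx,\yy,\zz)$ is actually realized in $\cM$. Thus $\cM$ has $3$-amalgamation of quantifier-free types.

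With this in hand, Lemma~\ref{Saturated quantifier-elimination and 3-amalgamation implies strong universal duplication of qf-types} applies and gives that $\cM$ has strong universal duplication of quantifier-free types; since $|M|$ is regular, Corollary~\ref{Corollary showing when structure is sunflowerable from strong universal duplication of qf types} then immediately yields that $\cM$ is sunflowerable.

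The main obstacle is the bookkeeping in the first step: matching the formulation of $\DAP(3)$ in Definition~\ref{Age properties} (a statement about embeddings of finite structures, with a disjointness requirement) against the formulation of $3$-amalgamation of quantifier-free types in Definition~\ref{Definition 3-amalgamation of quantifier-free types} (a statement about types realized in $\cM$). One must check carefully that ``disjoint amalgam'' forces the amalgamated type to restrict correctly on each of the three coordinate blocks and forces no new relations on the union, and that quantifier-elimination legitimizes passing freely between ``type realized in $\cM$'' and ``isomorphism type of a finite substructure of $\cM$''. Once this dictionary is in place, the corollary is a one-line chain of results already established.
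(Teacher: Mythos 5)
Your proposal is correct and follows essentially the same route as the paper: the paper's proof is exactly the two-step chain through Lemma~\ref{Saturated quantifier-elimination and 3-amalgamation implies strong universal duplication of qf-types} and Corollary~\ref{Corollary showing when structure is sunflowerable from strong universal duplication of qf types}, treating the hypothesis $\DAP(3)$ as interchangeable with $3$-amalgamation of quantifier-free types (a translation the paper leaves implicit, noting it only in passing in Section~\ref{Properties of Ages}). Your spelling out of that dictionary is the only addition, and it matches the intended reading.
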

\begin{proof}
This follows immediately from Lemma \ref{Saturated quantifier-elimination and 3-amalgamation implies strong universal duplication of qf-types} and  Corollary \ref{Corollary showing when structure is sunflowerable from strong universal duplication of qf types}. 
\end{proof}

We can apply Corollary \ref{Corollary showing when structure is sunflowerable from strong universal duplication of qf types} to give many examples of sunflowerable structures. 
\begin{lemma}
\label{kappa-dense linear orderings are sunflowerable}
If $\kappa$ is a regular cardinal and $(X, \leq)$ is a $\kappa$-saturated model of the theory of dense linear orderings without endpoints with $|X| = \kappa$ then $(X, \leq)$ is sunflowerable. 
\end{lemma}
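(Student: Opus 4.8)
The plan is to deduce this directly from Corollary~\ref{Corollary showing when structure is sunflowerable from 3-DAP} (or equivalently from Corollary~\ref{Corollary showing when structure is sunflowerable from strong universal duplication of qf types}), since $(X,\leq)$ satisfies almost all of the hypotheses on the nose. The theory of dense linear orderings without endpoints, which we abbreviate DLO, is known to admit quantifier elimination, so a $\kappa$-saturated model $(X,\leq)$ of size $\kappa$ is automatically $|X|$-saturated with quantifier elimination, and $|X|=\kappa$ is regular by assumption. Thus the only thing that needs checking is that $(X,\leq)$ has $\DAP(3)$, or alternatively that it has $3$-amalgamation of quantifier-free types in the sense of Definition~\ref{Definition 3-amalgamation of quantifier-free types}; I would verify the latter since it feeds Lemma~\ref{Saturated quantifier-elimination and 3-amalgamation implies strong universal duplication of qf-types} cleanly.

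First I would record that a quantifier-free type in DLO over a tuple is just a description of the linear order (with possible equalities) on that tuple, since the language is $\{\leq\}$ and every atomic formula is of the form $x_i \leq x_j$; so a quantifier-free type in variables $\xx$ is precisely a preorder on the index set that is total. To check $3$-amalgamation of quantifier-free types, suppose $p(\xx,\yy)$, $q(\yy,\zz)$, $r(\zz,\xx)$ are quantifier-free types realized in $(X,\leq)$ whose pairwise restrictions to the shared variable blocks agree. Each of $p,q,r$ is a total preorder on the relevant union of variable blocks; the compatibility conditions say these three total preorders agree on $\xx$, on $\yy$, and on $\zz$ respectively. I claim the union relation $a$ on $\xx\yy\zz$ — i.e. $v \leq w$ iff this is forced by one of $p,q,r$ — is again a total preorder extending all three. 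The key point is transitivity: any potential failure would involve elements from all three blocks, say $x \leq y$ (from $p$), $y \leq z$ (from $q$), $z \leq x$ (from $r$), which would force $x,y,z$ to all lie in a common equivalence class, and then the pairwise-agreement hypotheses make the three relations consistent rather than contradictory. Once $a$ is seen to be a total preorder, it is realized in $(X,\leq)$ because DLO is $\aleph_0$-saturated (indeed $\kappa$-saturated) — any finite or small total preorder embeds into a dense linear order, realizing equivalence classes as single points and strict relations as strict inequalities, which is possible by density and the absence of endpoints. This gives the required $a(\xx,\yy,\zz)$.

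Having established $3$-amalgamation of quantifier-free types, Lemma~\ref{Saturated quantifier-elimination and 3-amalgamation implies strong universal duplication of qf-types} gives that $(X,\leq)$ has strong universal duplication of quantifier-free types, and then Corollary~\ref{Corollary showing when structure is sunflowerable from strong universal duplication of qf types} immediately yields that $(X,\leq)$ is sunflowerable. The main obstacle is really just the bookkeeping in the $3$-amalgamation verification — making sure the amalgamated preorder is well-defined and total, and handling the equality (non-strict) cases where variables from different blocks collapse to the same point — but this is routine for linear orders and presents no genuine difficulty; density and the lack of endpoints guarantee realizability of whatever combined order type arises.
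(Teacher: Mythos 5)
There is a genuine gap: the theory of dense linear orderings does \emph{not} have $3$-amalgamation of quantifier-free types, so the route through Lemma~\ref{Saturated quantifier-elimination and 3-amalgamation implies strong universal duplication of qf-types} (equivalently, Corollary~\ref{Corollary showing when structure is sunflowerable from 3-DAP}) is not available. Take $\xx=(x)$, $\yy=(y)$, $\zz=(z)$ singletons and let $p(x,y)$ say $x<y$, $q(y,z)$ say $y<z$, and $r(z,x)$ say $z<x$. Each of these is realized in $(X,\leq)$, and the pairwise compatibility conditions of Definition~\ref{Definition 3-amalgamation of quantifier-free types} are vacuously satisfied, since they only compare the restrictions to the single blocks $\xx$, $\yy$, $\zz$, and in a linear order there is a unique unary quantifier-free type. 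Yet no quantifier-free type $a(x,y,z)$ can extend all three, as it would force $x<y<z<x$. Your transitivity argument asserts that such a cyclic configuration ``would force $x,y,z$ to all lie in a common equivalence class'' and that the pairwise-agreement hypotheses restore consistency; this is exactly where the argument fails, because the agreement hypotheses carry no cross-block order information and the strict cycle is simply unamalgamable. This is the standard reason why finite linear orders have \AP\ but not \DAP(3), and it is why the paper treats this lemma separately from the Rado graph (Lemma~\ref{Saturated models of the Rado graph are sunflowerable}), which does go through \DAP(3).

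The repair is to verify strong universal duplication of $\kappa$-quantifier-free types directly, which is what the paper does: given a tuple $\aa$ of length $<\kappa$ and $a\in X\setminus\aa$, the set $B_{\aa,a}$ of realizations of $\qftp[(X,\leq)](\aa,a)$ in the free variable is the set of points lying strictly between $A_<=\{b\in\aa \st b<a\}$ and $A_>=\{b\in\aa \st b>a\}$. Using $\kappa$-saturation one checks that $(B_{\aa,a},\leq)$ is itself a dense linear order without endpoints, is $\kappa$-saturated, and has size $\kappa$, hence is isomorphic to $(X,\leq)$ by uniqueness of saturated models. Then Corollary~\ref{Corollary showing when structure is sunflowerable from strong universal duplication of qf types} (whose hypotheses are saturation, quantifier elimination, and strong universal duplication, with no appeal to $3$-amalgamation) yields that $(X,\leq)$ is sunflowerable. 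Your final step invoking that corollary is fine; it is only the claimed intermediate verification of $3$-amalgamation that must be replaced.
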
 
\begin{proof}
As the theory of dense linear orderings without endpoints has quantifier-elimination, by Corollary \ref{Corollary showing when structure is sunflowerable from strong universal duplication of qf types}, it suffices to show that $(X, \leq)$ has strong universal duplication of $\kappa$-quantifier-free types. 

Suppose $\aa$ is a sequence of elements of $X$ of size $<\kappa$ and suppose $a \in X \setminus \aa$. Let $q(x)$ be the type of $a$ over $\aa$ in $X$. Let $B_{\aa, a} = \{b \st (X, \leq) \models q(b)\}$.  We can then divide the elements of $\aa$ into two disjoint sets $A_<$, $A_>$ where $b_< < a < b_>$ for all $b_< \in A_<$ and $b_> \in A_>$. 

Let $p(x, y, z) = \{b_< < x \st b_< \in A_<\} \cup \{x < y, y < z\} \cup \{z < b_{>}\st b_> \in A_>\}$. Note for any $c, d \in B_{\aa, a}$ with $c < d$ the type $p(c, y, d)$ is finitely consistent and hence realized.  Therefore $(X, \leq)$ is a dense linear order and $|B_{\aa, a}| = |X| = \kappa$. 

Further, for all $c \in B_{\aa, a}$, the type $\{b_{<}<x \st b_< \in A_<\} \cup \{x <c\}$ and the type $\{b_{>}>x \st b_> \in A_>\} \cup \{x >c\}$ are consistent and hence realized. Therefore $(B_{\aa, a}, \leq)$ is a dense linear ordering without endpoints. 

Now suppose $C \subseteq B_{\aa, a}$ with $|C| < \kappa$ and $r(x)$ is a type over $C$. If $r(x)$ is of the form $x = c$ for some $c \in C$, then $r(x)$ is realized in $B_{\aa, a}$. If $r(x)$ is not of the form $x = c$  we can partition $C$ into two sets $C_{<}$ and $C_>$ such that $r(x)$ is equivalent to $\{c_< < x \st c_< \in C_<\} \cup \{x < c_> \st c_> \in C_>\}$. But then $q(x) \cup r(x)$ is finitely consistent and so there must be some $d$ such that $(X, \leq) \models q(d) \cup r(d)$. But such a $d$ must be in $B_{\aa, a}$ and so $(B_{\aa, a}, \leq)$ is $\kappa$-saturated. Hence $(B_{\aa, a}, \leq) \cong (X, \leq)$. So, as $\aa, a$ were arbitrary, $(X, \leq)$ has strong universal duplication of $\kappa$-quantifier-free types. 
\end{proof}

\begin{lemma}
\label{Saturated models of the Rado graph are sunflowerable}
If $\kappa$ is a regular cardinal and $(X, E)$ is a $\kappa$-saturated model of the theory of the Rado graph with $|X| = \kappa$, then $(X, E)$ is sunflowerable. 
\end{lemma}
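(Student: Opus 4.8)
The plan is to follow the proof of Lemma~\ref{kappa-dense linear orderings are sunflowerable} almost verbatim. Since the theory of the Rado graph admits quantifier-elimination, Corollary~\ref{Corollary showing when structure is sunflowerable from strong universal duplication of qf types} reduces the claim to showing that $(X, E)$ has strong universal duplication of $\kappa$-quantifier-free types. So I would fix a tuple $\aa \subseteq X$ of length $<\kappa$ and an element $a \in X \setminus \aa$; let $A_+$ consist of those entries of $\aa$ that are $E$-adjacent to $a$ and $A_-$ of the remaining entries. By quantifier-elimination, $B_{\aa, a}$ is precisely the set of $b \in X$ that are distinct from every entry of $\aa$, $E$-adjacent to every element of $A_+$, and $E$-adjacent to no element of $A_-$. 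The goal is then to show $(B_{\aa,a}, E) \cong (X, E)$, for which it suffices to show $(B_{\aa, a}, E)$ is a $\kappa$-saturated model of the theory of the Rado graph of cardinality $\kappa$.

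The single device I would use for all of this is the following observation. Given finite disjoint sets $U, V \subseteq B_{\aa, a}$, a finite set $C \subseteq B_{\aa, a}$, and a quantifier-free $1$-type $r(x)$ over $C$ that is not of the form $x = c$, consider the collection of formulas over $\aa \cup U \cup V \cup C$ asserting that $x$ is distinct from every named element; that $E(x, b)$ for $b \in A_+$ and $\neg E(x, b)$ for $b \in A_-$; that $E(x, u)$ for $u \in U$ and $\neg E(x, v)$ for $v \in V$; and $r(x)$. By the extension axioms of the Rado graph this collection is finitely satisfiable in $(X, E)$, and it is a type over a parameter set of size $<\kappa$; since $(X, E)$ is $\kappa$-saturated it is realized by some $w \in X$, which by construction lies in $B_{\aa, a}$ and realizes $r$ over $C$.

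From this I would read off the three facts that together give $(B_{\aa, a}, E) \cong (X, E)$. First, taking $C = U = V = \emptyset$ and iterating the construction, using the regularity of $\kappa$ at limit stages to keep the running parameter set below size $\kappa$, produces $\kappa$ pairwise distinct elements of $B_{\aa, a}$, so $|B_{\aa, a}| = \kappa$. Second, taking $C = \emptyset$ and arbitrary finite disjoint $U, V$ shows $(B_{\aa, a}, E)$ satisfies every extension axiom, hence is a model of the theory of the Rado graph. Third, the general case (together with the trivial case $r(x) = (x = c)$) shows that every quantifier-free $1$-type over a subset of $B_{\aa, a}$ of size $<\kappa$ is realized in $B_{\aa, a}$, so $(B_{\aa, a}, E)$ is $\kappa$-saturated. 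Since a $\kappa$-saturated model of the Rado graph of size $\kappa$ is isomorphic to $(X, E)$, and $\aa$, $a$ were arbitrary, $(X, E)$ has strong universal duplication of $\kappa$-quantifier-free types, and Corollary~\ref{Corollary showing when structure is sunflowerable from strong universal duplication of qf types} completes the proof.

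I do not expect any real obstacle here beyond bookkeeping. The one point that needs care is that at each application the parameter set $\aa \cup U \cup V \cup C$ must have size $<\kappa$, so that $\kappa$-saturation of $(X, E)$ applies, and that the transfinite construction witnessing $|B_{\aa, a}| = \kappa$ uses the regularity of $\kappa$ to guarantee the running parameter set never reaches size $\kappa$.
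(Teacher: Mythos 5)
Your argument is correct, but it takes a different route from the paper. The paper's proof is a one-line appeal to Corollary~\ref{Corollary showing when structure is sunflowerable from 3-DAP}: the theory of the Rado graph has quantifier-elimination and a saturated model has $3$-amalgamation of quantifier-free types (\DAP(3)), so strong universal duplication comes for free via Lemma~\ref{Saturated quantifier-elimination and 3-amalgamation implies strong universal duplication of qf-types}. You instead bypass the $3$-amalgamation machinery and verify strong universal duplication of $\kappa$-quantifier-free types by hand, imitating the proof of Lemma~\ref{kappa-dense linear orderings are sunflowerable}: identify $B_{\aa,a}$ via quantifier-elimination, then use extension axioms plus $\kappa$-saturation of $(X,E)$ to show $(B_{\aa,a},E)$ has size $\kappa$, models the Rado theory, and is $\kappa$-saturated, hence is isomorphic to $(X,E)$, and finish with Corollary~\ref{Corollary showing when structure is sunflowerable from strong universal duplication of qf types}. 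What the paper's route buys is brevity and generality (the same corollary disposes of Example~\ref{Complicated hypergraph condition ensuring sunflowerablity} as well); what your route buys is a self-contained, elementary verification that does not invoke \DAP(3)\ at all, at the cost of redoing a saturation argument. Two bookkeeping points in your write-up: your key device is stated for finite $C$, but for the $\kappa$-saturation of $(B_{\aa,a},E)$ you need it for $C$ of size $<\kappa$ (the argument is unchanged, since finite satisfiability only inspects finite fragments and the full parameter set still has size $<\kappa$, as you note at the end); and in the cardinality step the previously chosen elements must actually be placed among the parameters (e.g.\ into the distinctness clauses), so $C=U=V=\emptyset$ is not literally what you iterate. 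Neither affects correctness.
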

\begin{proof}
This follows immediately from Corollary \ref{Corollary showing when structure is sunflowerable from 3-DAP} and the fact that the theory of the Rado graph has quantifier-elimination and the fact that any saturated model of the theory of the Rado graph has $\DAP(3)$. 
\end{proof}

\begin{example}
\label{Complicated hypergraph condition ensuring sunflowerablity}
For $1 \leq \ell \leq k$, let $K^k_\ell$ be the collection of all $k$-uniform hypergraphs such that all $\ell$-element subsets are in an edge. For $J \subseteq K^k_\ell$ let $\cH_{J}$ be the collection of all $k$-uniform hypergraphs which do not realize any element of $J$. Note $\cH_J$ is an age with \HP, \JEP, \SAP. Further, if $\ell \geq 3$ then $\cH_J$ has \DAP(3) as well. 

In particular, there is a \Fraisse\ limit $\cG_{J}$ of $\cH_J$ and by Corollary \ref{Corollary showing when structure is sunflowerable from 3-DAP} $\cG_J$ is sunflowerable. 
\end{example}

Note that these techniques don't apply to all ultrahomogeneous structures, even if the structures have free-amalagmation as the Rado graph does. 

For $n \geq 3$ let $\cH_n$ be the countable generic $K_n$-free graph.

\begin{lemma}
$\cH_n$ does not have universal duplication of quantifier-free types. 
\end{lemma}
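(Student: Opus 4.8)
The plan is to produce a single tuple $\aa$ and element $a$ witnessing the failure, by choosing them so that $B_{\aa, a}$ is forced to induce an edgeless graph, and then noting that an edgeless graph contains no copy of $\cH_n$.

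Concretely, I would first use that $K_{n-1}$ is $K_n$-free (here $n \geq 3$) to find vertices $b_0, \dots, b_{n-2}$ of $\cH_n$ forming a clique of size $n-1$. Set $\aa = (b_0, \dots, b_{n-3})$ --- a clique of size $n-2$, which is a single vertex when $n = 3$ --- and $a = b_{n-2}$. Since $\qftp[\cH_n](\aa, a)$ asserts that the entries of $\aa$ are pairwise adjacent and that $a$ is adjacent to each of them, an element $b$ lies in $B_{\aa, a}$ exactly when $b$ is adjacent to every $b_i$ with $i < n-2$; in particular every such $b$ is distinct from each $b_i$ because the edge relation $E$ is irreflexive.

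The key step is to check that $\cH_n\rest[B_{\aa, a}]$ has no edges. Indeed, if $b, b' \in B_{\aa, a}$ were distinct and $E(b,b')$ held, then $\{b_0, \dots, b_{n-3}, b, b'\}$ would be a set of $n$ pairwise distinct vertices (distinctness of the $b_i$ from $b,b'$ was just noted) that are pairwise adjacent: the $b_i$ form a clique, $b$ and $b'$ are each adjacent to all the $b_i$ by membership in $B_{\aa, a}$, and $E(b,b')$ holds by assumption. This is a copy of $K_n$ inside $\cH_n$, contradicting its $K_n$-freeness.

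To conclude, since $K_2$ is $K_n$-free, $\cH_n$ contains an edge and hence is not edgeless, whereas for every $C \subseteq B_{\aa, a}$ the induced subgraph $\cH_n\rest[C]$ is a subgraph of the edgeless graph $\cH_n\rest[B_{\aa, a}]$ and so is itself edgeless; therefore $\cH_n\rest[C] \not\cong \cH_n$ for all such $C$, which is exactly the failure of universal duplication of quantifier-free types. I do not expect a genuine obstacle --- the argument is short --- and the only point requiring a moment's attention is confirming that the $n$ vertices exhibited in the contradiction are pairwise distinct, which is where irreflexivity of $E$ and the hypothesis $n \geq 3$ are used.
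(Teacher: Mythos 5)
Your proposal is correct and follows essentially the same argument as the paper: fix a clique of size $n-1$, take $\aa$ to be all but one vertex and $a$ the remaining one, and observe that an edge between two elements of $B_{\aa, a}$ would complete a $K_n$, so no subset of $B_{\aa, a}$ can induce a copy of $\cH_n$. Your phrasing (showing $\cH_n\rest[B_{\aa,a}]$ is edgeless) and the paper's (extracting an edge from a purported copy of $\cH_n$) are the same contradiction presented in opposite order.
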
  
\begin{proof}
Let $a_0, \dots, a_{n-2}$ be a complete subgraph of $\cH_n$. If $\cH_n$ had universal duplication of quantifier-free types, then there would have to be a set $X \subseteq \{b \in \cH_n \st \qftp[\cH_n](a_0, \dots, a_{n-2}) = \qftp[\cH_n](a_0, \dots, a_{n-3}, b)\}$ such that $\cH_n \rest[X] \cong \cH_n$. In particular, there would have to be two $b_0, b_1$ such that $\qftp[\cH_n](a_0, \dots, a_{n-2}) = \qftp[\cH_n](a_0, \dots, a_{n-3}, b_0) = \qftp[\cH_n](a_0, \dots, a_{n-3}, b_1)$ and $\cH_n \models E(b_0, b_1)$. But then $\{a_i\}_{i \in [n-3]} \cup \{b_0\}$ is a complete $K_{n-1}$ and $\{a_i\}_{i \in [n-3]} \cup \{b_1\}$ is a complete $K_{n-1}$. So $\{a_i\}_{i \in [n-3]} \cup \{b_0, b_1\}$ is a complete $K_n$ contradicting the fact that $\cH_n$ is $K_n$-free. 
\end{proof}

\section{Linear Ordered Sunflowers}
\label{Linear Ordered Sunflower Section}

We now consider, for infinite cardinals $\kappa$, which linear orders of size $\kappa$ are sunflowerable.  

\begin{definition}
Suppose $\cY = (Y, \leq)$ is a linear ordering and for all $y \in Y$, $\cX_y = (X_y, \leq)$ is a linear ordering. We define $(\sum_{y \in \cY} \cX_y, \leq)$ to be the linear ordering where 
\begin{itemize}
\item the underlying set is $\{(y, x) \st y \in Y\text{ and } x \in X_y\}$, 

\item $(y_0, x_0) \leq (y_1, x_1)$ if either $y_0 < y_1$ or $y_0 = y_1$ and $x_0 \leq x_1$. 

\end{itemize}

We call $\sum_{y \in \cY} \cX_y$ a \defn{lexicographical sum}. 
\end{definition}

\begin{definition}
A linear ordering $(X, \leq)$ is \defn{$\kappa$-dense} if it has size at least $2$ and for all distinct $a, b \in X$, $|\{c \st a < c < b\}| = \kappa$. 
\end{definition}

Note that $(\Rationals, \leq)$ is $\w$-dense and the $\kappa$-dense linear orderings of size $\kappa$ are $\kappa$-sized analogs of $(\Rationals, \leq)$. This motivates the following definition. 

\begin{definition}
A linear ordering $(X, \leq)$ is \defn{$\kappa$-scattered} if there does not exist a $Y \subseteq X$ such that $(Y, \leq)$ is $\kappa$-dense. We say a linear ordering is \defn{scattered} if it is $\w$-scattered. 
\end{definition}

Hausdorff gave a characterization of scattered linear orderings which was generalized in \cite{MR2958935} to $\kappa$-scattered linear orderings.

\begin{definition}
Let $\cBL_\kappa$ be the collection of linear orderings which are either (a) of size $<\kappa$, (b) a well-ordering, or (c) an inverse of a well-ordering. Let $\cL_\kappa$ be the closure of $\cBL_{\kappa}$ under lexicographical sums ordered by elements of $\cBL_{\kappa}$. 
\end{definition}
\begin{proposition}[\cite{MR2958935} Theorem 3.10]
\label{Characterization of kappa-scattered linear orderings}
The following are equivalent for a linear ordering $L$. 
\begin{itemize}
\item $L$ is $\kappa$-scattered, 

\item $L \in \cL_\kappa$.
\end{itemize}            
\end{proposition}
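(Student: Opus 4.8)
The plan is to run the classical Hausdorff argument characterizing scattered orders, in the form adapted to $\kappa$.

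For the implication $\cL_\kappa\subseteq\{\kappa\text{-scattered orders}\}$, I would induct on the generation of $\cL_\kappa$ from $\cBL_\kappa$ under $\cBL_\kappa$-indexed lexicographical sums; equivalently, I show that the class of $\kappa$-scattered orders contains $\cBL_\kappa$ and is closed under those sums. In the base case, no $(Y,\leq)\subseteq(X,\leq)$ can be $\kappa$-dense when $X\in\cBL_\kappa$: if $|X|<\kappa$ then $Y$ cannot attain size $\geq\kappa$, which a $\kappa$-dense order must; and if $X$ is a well-ordering or an inverse well-ordering, then any $Y\subseteq X$ with $\geq 2$ points has a pair with an immediate neighbour in $Y$, hence nothing of $Y$ strictly between them. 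For the inductive step, let $L=\sum_{y\in\cY}\cX_y$ with $\cY\in\cBL_\kappa$ and each $\cX_y$ $\kappa$-scattered, and suppose $Z\subseteq L$ were $\kappa$-dense. If some fibre $Z\cap X_y$ had two elements $z_0<z_1$, then (every element of $L$ outside $X_y$ is comparable to all of $X_y$) all of the $\geq\kappa$ elements of $Z$ strictly between $z_0$ and $z_1$ would lie in $X_y$, so $Z\cap X_y$ would be $\kappa$-dense, contradicting that $\cX_y$ is $\kappa$-scattered. Hence each fibre of $Z$ is empty or a singleton, the projection carries $Z$ isomorphically onto a suborder of $Y$, and that suborder is $\kappa$-dense — contradicting the base case, since $\cY\in\cBL_\kappa$.

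For the converse I would argue by contradiction: assume $L$ is $\kappa$-scattered but $L\notin\cL_\kappa$. First record two closure facts about $\cL_\kappa$: it is closed under $2$-element sums (since $2\in\cBL_\kappa$) and, by a routine induction on its generation, under passing to convex subsets. Hence the relation on $L$ given by ``$a\sim b$ iff the closed interval $[a,b]$ lies in $\cL_\kappa$'' is an equivalence relation whose classes are convex. The crux is the claim that \emph{every $\sim$-class $C$ lies in $\cL_\kappa$}. If $C$ has a least and a greatest element, then $C$ is a single interval $[a,b]\in\cL_\kappa$. If $C$ has a least element but no greatest, let $\lambda$ be the cofinality of $C$ — a regular cardinal, hence a well-ordering lying in $\cBL_\kappa$ — fix a cofinal increasing $\lambda$-sequence in $C$ beginning at the least element, and write $C$ as the lexicographical sum along it of the successive half-open chunks it cuts out, each chunk being a convex subset of an interval $[c,c']$ with $c,c'\in C$ (so $c\sim c'$ and $[c,c']\in\cL_\kappa$), hence in $\cL_\kappa$. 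Symmetrically if $C$ has a greatest but no least element; and if $C$ is unbounded on both sides, split it at any point into two pieces of the previous kinds. This is exactly where the generalization of Hausdorff's theorem forces one to allow well-ordered index orders of arbitrary length rather than just $\omega$, and it is the step I expect to need the most care — chiefly in the limit-stage bookkeeping of the decomposition; everything else is routine once the closure of $\cL_\kappa$ under $2$-sums and convex subsets is in hand.

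Granting the claim, pass to $\bar L=L/\!\sim$. If $L$ were a single $\sim$-class it would lie in $\cL_\kappa$, so $\bar L$ has at least two points. For any $\bar a<\bar b$ in $\bar L$, the interval $[a,b]$ of $L$ is the lexicographical sum, over $[\bar a,\bar b]\subseteq\bar L$, of the intersections of $[a,b]$ with the intermediate classes, and each such intersection is a convex subset of a class, hence in $\cL_\kappa$; were $[\bar a,\bar b]$ in $\cBL_\kappa$ this would force $[a,b]\in\cL_\kappa$, i.e.\ $a\sim b$. So every closed interval of $\bar L$ lies outside $\cBL_\kappa$, and in particular has size $\geq\kappa$. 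Now build a $\kappa$-dense suborder of $\bar L$ in $\omega$ stages: start with two points, and at each stage, for every pair of points chosen so far, throw in $\kappa$ new points of $\bar L$ strictly between them (possible since every interval of $\bar L$ has size $\geq\kappa$). The union $Y$ has size $\kappa$ and at least $\kappa$ points of $Y$ between any two of its points — hence exactly $\kappa$ — so $Y$ is $\kappa$-dense. Choosing one representative per class sends $Y$ to a $\kappa$-dense suborder of $L$, contradicting that $L$ is $\kappa$-scattered. Therefore $L\in\cL_\kappa$, completing the equivalence.
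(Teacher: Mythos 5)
The paper does not actually prove this proposition: it is quoted directly from \cite{MR2958935} (Theorem 3.10), so there is no internal argument to compare yours against. Your Hausdorff-style condensation proof is the standard route to such a statement, and its overall architecture is sound. The easy direction (induction on the generation of $\cL_\kappa$, using that each summand is convex in the lexicographical sum, so a $\kappa$-dense suborder either concentrates in one fibre or projects injectively to the index order) is correct. In the converse, the equivalence relation $a\sim b$ iff $[a,b]\in\cL_\kappa$, the closure of $\cL_\kappa$ under $2$-indexed sums and convex suborders, the observation that every interval of $L/\!\sim$ with distinct endpoints lies outside $\cBL_\kappa$ and hence has size at least $\kappa$, and the $\omega$-stage construction of a $\kappa$-dense suborder of $L/\!\sim$ followed by choosing representatives are all fine.

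Two pieces of bookkeeping in your crux claim do need repair. First, exactly where you anticipated: the naive chunks $[c_\alpha,c_{\alpha+1})$ need not cover $C$ at limit stages, since the cofinal sequence may have no supremum in $C$ while $C$ still contains points above every earlier $c_\beta$ and below $c_\alpha$; define instead $D_\alpha=\{x\in C: x<c_{\alpha+1}\ \text{and}\ x\ge c_{\beta+1}\ \text{for all}\ \beta<\alpha\}$, each of which is a convex subset of $[c_0,c_{\alpha+1}]$ with endpoints in $C$, hence in $\cL_\kappa$, and these do cover $C$ and are indexed by $\lambda$. Second, the reduction of a class unbounded on both sides to ``two pieces of the previous kinds'' is circular as literally stated: splitting $C$ at $c$ gives one piece with the needed endpoint, but the other piece (say $\{x\in C: x>c\}$) may have neither a least nor a greatest element. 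The fix costs nothing: anchor the cofinal (respectively coinitial) sequence for that piece at an arbitrary point $c_0$ in it and add the leftover convex segment between $c$ and $c_0$ as one extra chunk, which is a convex subset of $[c,c_0]\in\cL_\kappa$; the index order becomes $1+\lambda$ or $\mu^*+1$, still a well-order or an inverse of one, hence in $\cBL_\kappa$. With these two repairs your argument is complete and, as far as one can tell, is the same generalization of Hausdorff's theorem that the cited reference carries out.
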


\begin{definition}
Let $\cD_\kappa$ be the collection of linear orderings of size $< \kappa$. Consider the following collection of linear orderings defined by induction. $L_{0, \kappa}$ contains all isomorphic copies of elements of $\cD_\kappa$. If $L_{i, \kappa}$ has been defined for all $i \in \w \cdot \delta$ let $L_{\w \cdot \delta, \kappa} = \bigcup_{i \in \w \cdot \delta} L_{i,\kappa}$. 

If $L_{\alpha, \kappa}$ has been defined and $X$ is a linear ordering, let $L_{\alpha, \kappa}^X$ be the collection of linear orderings of the form $\sum_{x \in X} A_x$ where $A_x \in L_{\alpha, \kappa}$ for all $x \in X$. Let $L_{\alpha+1, \kappa} = \bigcup_{Y \in \cD_\kappa}L_{\alpha, \kappa}^{Y} \cup L_{\alpha, \kappa}^{\kappa} \cup L_{\alpha, \kappa}^{\kappa^*}$. Let $L_{\infty, \kappa} = \bigcup_{i \in \ORD} L_{i, \kappa}$.

For $A \in L_{\infty, \kappa}$, let $r_\kappa(A)$ be the least $\alpha$ such that $A \in L_{\alpha, \kappa}$. We call $r_\kappa(A)$ the \defn{$\kappa$-scattered construction rank} of $A$.
\end{definition} 

\begin{lemma}
\label{kappa-scattered description interms of kappa sequences}
$L_{\infty, \kappa}$ is the collection of $\kappa$-scattered linear orderings of size at most $\kappa$. 
\end{lemma}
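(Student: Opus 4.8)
The plan is to deduce this from Hausdorff's characterization as recorded in Proposition~\ref{Characterization of kappa-scattered linear orderings}: since that proposition identifies the $\kappa$-scattered linear orderings with the members of $\cL_\kappa$, it suffices to prove that $L_{\infty,\kappa}$ equals the class of members of $\cL_\kappa$ of size at most $\kappa$, and I would split this into the two inclusions.

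For the inclusion $L_{\infty,\kappa}\subseteq\cL_\kappa$ together with the cardinality bound, I would argue by transfinite induction on $\alpha$ that every $A\in L_{\alpha,\kappa}$ lies in $\cL_\kappa$ and satisfies $|A|\le\kappa$. The base case is immediate because $L_{0,\kappa}$ consists of copies of members of $\cD_\kappa\subseteq\cBL_\kappa$; limit stages are unions. At a successor stage every new ordering has the form $\sum_{x\in X}A_x$ with $X$ either in $\cD_\kappa$ or equal to $\kappa$ or to $\kappa^*$, and in all three cases $X\in\cBL_\kappa$, so closure of $\cL_\kappa$ under lexicographical sums indexed by $\cBL_\kappa$-orderings gives $\sum_{x\in X}A_x\in\cL_\kappa$, while $|X|\le\kappa$ together with the inductive bound $|A_x|\le\kappa$ gives $|\sum_{x\in X}A_x|\le\kappa\cdot\kappa=\kappa$. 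Since by Proposition~\ref{Characterization of kappa-scattered linear orderings} every member of $\cL_\kappa$ is $\kappa$-scattered, this already shows every element of $L_{\infty,\kappa}$ is a $\kappa$-scattered linear ordering of size at most $\kappa$.

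The reverse inclusion is the substantive one, and its engine will be an auxiliary closure lemma: \emph{if $I\in\cBL_\kappa$ with $|I|\le\kappa$ and $A_x\in L_{\infty,\kappa}$ for every $x\in I$, then $\sum_{x\in I}A_x\in L_{\infty,\kappa}$.} To prove it, first choose (by replacement) a single ordinal $\beta$ with $A_x\in L_{\beta,\kappa}$ for all $x$, which is legitimate since the stages $L_{i,\kappa}$ increase with $i$. If $|I|<\kappa$ then $I\in\cD_\kappa$ and $\sum_{x\in I}A_x\in L_{\beta,\kappa}^I\subseteq L_{\beta+1,\kappa}$. If $|I|=\kappa$ and $I$ is a well-ordering, then $I\cong\lambda$ for an ordinal $\lambda$ with $\kappa\le\lambda<\kappa^+$, and I would run an inner induction on $\lambda$: for $\lambda=\kappa$ the sum is directly a member of $L_{\beta,\kappa}^{\kappa}$; for $\lambda$ a successor one peels off the last block and uses a two-element index; for $\lambda$ a limit one fixes a strictly increasing continuous cofinal sequence $(\lambda_j)_{j<\cofinal(\lambda)}$ with $\lambda_0=0$, which exhibits $\lambda$ as the lexicographical sum over $\cofinal(\lambda)$ of the successive intervals $[\lambda_j,\lambda_{j+1})$, each of order type some $\nu_j<\lambda$, so that $\sum_{x\in\lambda}A_x\cong\sum_{j<\cofinal(\lambda)}\bigl(\sum_{\eta<\nu_j}A_{\lambda_j+\eta}\bigr)$; the inner summands lie in $L_{\infty,\kappa}$ by the inner induction hypothesis (since $\nu_j<\lambda$), and because $\cofinal(\lambda)$ is a regular cardinal $\le\kappa$ it is either a member of $\cD_\kappa$ or equal to $\kappa$, so the outer sum is captured by $L_{\gamma,\kappa}^{\cofinal(\lambda)}\subseteq L_{\gamma+1,\kappa}$ or by $L_{\gamma,\kappa}^{\kappa}$ for a suitable $\gamma$. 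The case in which $I$ is the inverse of a well-ordering is handled by the mirror-image argument, where the interval decomposition now produces a sum over $\cofinal(\lambda)^*$, and the clause $L_{\gamma,\kappa}^{\kappa^*}$ is exactly what is needed when $\cofinal(\lambda)=\kappa$.

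With the closure lemma in hand I would finish by inducting on the stage at which $A$ enters $\cL_\kappa$, presenting $\cL_\kappa$ as an increasing union $\bigcup_\alpha\cG_\alpha$ with $\cG_0=\cBL_\kappa$ and $\cG_{\alpha+1}$ adjoining all lexicographical sums over $\cBL_\kappa$-indices of families from $\cG_\alpha$. If $A\in\cBL_\kappa$ with $|A|\le\kappa$, then $A$ is small (hence in $L_{0,\kappa}$) or is a well-ordering or an inverse well-ordering of size $\le\kappa$, and the last two cases follow from the closure lemma applied to a family of singletons. If $A=\sum_{x\in I}A_x\in\cG_{\alpha+1}$, I would first discard the indices $x$ with $A_x=\emptyset$ — the surviving index set is a suborder of $I$, hence still in $\cBL_\kappa$, and the sum is unchanged — so that every $A_x$ is nonempty; then the evident injections $A_x\hookrightarrow A$ and $I\hookrightarrow A$ force $|A_x|\le|A|\le\kappa$ and $|I|\le|A|\le\kappa$, the outer induction hypothesis puts each $A_x$ in $L_{\infty,\kappa}$, and the closure lemma finishes. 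The main obstacle is precisely the well-ordered-index step of the closure lemma: one must see that an arbitrary ordinal index of size $\kappa$ can be rebuilt from indices that are either small or exactly $\kappa$ (respectively $\kappa^*$), and getting the cofinality bookkeeping right — including the observation that for singular $\kappa$ the relevant cofinalities are automatically $<\kappa$, whereas the clauses $L_{\alpha,\kappa}^{\kappa}$ and $L_{\alpha,\kappa}^{\kappa^*}$ are needed exactly when $\kappa$ is regular — is the only delicate point; everything else is routine transfinite induction, and combining the two inclusions with Proposition~\ref{Characterization of kappa-scattered linear orderings} yields the statement.
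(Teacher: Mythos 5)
Your argument is correct and takes essentially the same route as the paper: both reduce the problem to Proposition \ref{Characterization of kappa-scattered linear orderings} and both handle ordinals of size $\kappa$ below $\kappa^+$ (and their reverses) by cutting along a continuous cofinal sequence into shorter blocks indexed by a member of $\cD_\kappa$ or by $\kappa$ (resp.\ $\kappa^*$). The only difference is one of explicitness: the paper proves just that $\beta,\beta^* \in L_{\infty,\kappa}$ for all $\beta<\kappa^+$ via a minimal counterexample and asserts that this suffices, whereas you spell out the two steps that assertion compresses, namely the closure of $L_{\infty,\kappa}$ (for orders of size at most $\kappa$) under lexicographical sums indexed by members of $\cBL_\kappa$ and the induction over the stages generating $\cL_\kappa$.
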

\begin{proof}
It is immediate that every element of $L_{\infty, \kappa}$ has size at most $\kappa$. Further  $L_{\infty, \kappa} \subseteq \cL_\kappa$ and so by Proposition \ref{Characterization of kappa-scattered linear orderings} every element of $L_{\infty, \kappa}$ is $\kappa$-scattered. 

Also by Proposition \ref{Characterization of kappa-scattered linear orderings}, it suffices to show that for all ordinals $\beta$ of size at most $\kappa$ we have $\beta, \beta^* \in L_{\infty, \kappa}$. If $\beta \leq \kappa$ it is immediate that $\beta \in L_{\infty, \kappa}$. 

Now assume to get a contradiction that for some $\beta < \kappa^+$ we have $\beta \not \in L_{\infty, \kappa}$ and let $\beta$ be minimal as such. If $\beta = \alpha + 1$ then $\alpha \in L_{\infty, \kappa}$ and $\alpha + S_1 = \beta$ where $S_1$ is the $1$-point linear ordering (and hence in $\cD$). Therefore $\beta \in L_{\infty, \kappa}$, getting us a contradiction. 

But if $\beta$ is a limit, then there must be an increasing sequence of ordinals $(\zeta_i)_{i \in \eta}$ (for $\eta \leq \kappa$) such that $\beta = \sum_{i \in \eta} \zeta_i$. But as each $\zeta_i \in L_{\infty, \kappa}$ we have $\beta \in L_{\infty, \kappa}$. This gets us our contradiction, ensuring that for all $\beta < \kappa^+$, $\beta \in L_{\infty, \kappa}$. The case that $\beta^* \in L_{\infty, \kappa}$ is identical.  
\end{proof}

We now give an important property of $\kappa$-scattered linear orderings of size $\kappa$.

\begin{proposition}
\label{Main scattered lemma for kappa partitions}
Suppose $\cX$ is a $\kappa$-scattered linear ordering of size $\kappa$. Then one of three things happens. 
\begin{itemize}
\item[(a)] We can find a linear ordering $\cD$ with $|D| < \kappa$ and $\kappa$-scattered linear orderings $(\cX_i)_{i \in D}$ such that $\cX = \sum_{i \in \cD} \cX_i$ and for all $i \in D$, $\cX_i$ does not contain an isomorphic copy of $\cX$ as a subset, 

\item[(b)] We can find $\kappa$-scattered linear orderings $(\cX_i)_{i \in \kappa}$ such that $\cX = \sum_{i \in \kappa} \cX_i$ and for all $i \in \kappa$, $\cX_i$ does not contain a copy of $\cX$, 

\item[(c)] We can find $\kappa$-scattered linear orderings $(\cX_i)_{i \in \kappa}$ such that $\cX = \sum_{i \in \kappa^*} \cX_i$ and for all $i \in \kappa$, $\cX_i$ does not contain a copy of $\cX$. 
\end{itemize}
\end{proposition}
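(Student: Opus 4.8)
The plan is to reduce the proposition to the inductive description of $\kappa$-scattered linear orderings of size at most $\kappa$ from Lemma \ref{kappa-scattered description interms of kappa sequences}, and then run a minimality argument on the $\kappa$-scattered construction rank. Since $\cX$ is $\kappa$-scattered of size $\kappa$, it lies in $L_{\infty,\kappa}$, so it has a well-defined construction rank $\alpha = r_\kappa(\cX)$, the least ordinal with $\cX \in L_{\alpha,\kappa}$. Because $L_{0,\kappa}$ contains only orderings of size $<\kappa$ and $|X|=\kappa$, we get $\alpha>0$; and because $L_{\w\cdot\delta,\kappa}=\bigcup_{i\in\w\cdot\delta}L_{i,\kappa}$ at limit stages, minimality of $\alpha$ forces $\alpha$ to be a successor ordinal, say $\alpha=\beta+1$. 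By the definition of $L_{\beta+1,\kappa}$ this means $\cX \in L_{\beta,\kappa}^{Y}$ for some $Y\in\cD_\kappa$, or $\cX\in L_{\beta,\kappa}^{\kappa}$, or $\cX\in L_{\beta,\kappa}^{\kappa^*}$; unwinding the definition of these families, each alternative produces a lexicographical decomposition $\cX=\sum_{i\in I}\cX_i$ of the shape required by (a), (b), (c) respectively, with every $\cX_i$ lying in $L_{\beta,\kappa}$ (hence, by Lemma \ref{kappa-scattered description interms of kappa sequences}, itself $\kappa$-scattered).

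The only real content is to show that no summand $\cX_i$ contains an isomorphic copy of $\cX$. For this I would first prove, by transfinite induction on $\gamma$, that each $L_{\gamma,\kappa}$ is closed under passing to sub-linear-orderings; equivalently, that $r_\kappa$ does not increase along embeddings of linear orderings. The base case holds because a subordering of a size-$<\kappa$ ordering has size $<\kappa$; the limit case is immediate from the union definition; and in the successor case, if $\cZ=\sum_{x\in X}A_x$ with each $A_x\in L_{\beta,\kappa}$ and $\cY$ is a subordering of $\cZ$, then $\cY\cong\sum_{x\in X}B_x$ where $B_x$ is the (possibly empty) trace of $\cY$ on the $x$-th block, each $B_x$ is a subordering of $A_x$ and hence lies in $L_{\beta,\kappa}$ by the inductive hypothesis (the empty ordering being in $L_{0,\kappa}\subseteq L_{\beta,\kappa}$), and the index set $X$ is unchanged, so $\cY\in L_{\beta+1,\kappa}$. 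One also checks along the way that every $L_{\gamma,\kappa}$ is closed under isomorphism, so that $r_\kappa$ is an isomorphism invariant.

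With that lemma in hand the proof closes: if some $\cX_i$ contained a subordering $\cX'$ isomorphic to $\cX$, then $r_\kappa(\cX)=r_\kappa(\cX')\le r_\kappa(\cX_i)\le\beta<\alpha$, contradicting the minimality of $\alpha$. So each $\cX_i$ is $\kappa$-scattered and contains no copy of $\cX$, and the three alternatives for which subfamily $\cX$ belongs to at level $\beta+1$ are precisely conclusions (a), (b), and (c). I expect the main obstacle to be the sub-linear-ordering closure lemma together with the bookkeeping that matches the three subfamilies of $L_{\beta+1,\kappa}$ with the three conclusions; the only delicate point there is correctly allowing empty blocks $B_x$ when cutting a subordering across a lexicographical sum.
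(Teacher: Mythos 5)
Your proof is correct, but it runs along a genuinely different track from the paper's. You work with the construction rank of $\cX$ itself: you show $r_\kappa(\cX)$ must be a successor $\beta+1$, decompose $\cX$ intrinsically as a lexicographical sum over an index set in $\cD_\kappa$, $\kappa$, or $\kappa^*$ with blocks in $L_{\beta,\kappa}$, and then rule out a copy of $\cX$ inside a block via an auxiliary lemma, proved by transfinite induction, that each $L_{\gamma,\kappa}$ is closed under suborderings (and isomorphism), so that $r_\kappa$ cannot increase along embeddings. The paper instead minimizes rank over the collection of all $\kappa$-scattered linear orderings of size $\kappa$ into which $\cX$ embeds, fixes an embedding $\tau$ of $\cX$ into a rank-minimal such $\cZ = \sum_i \cA_i$, and takes $\cX_i = \tau^{-1}(\cA_i)$; a copy of $\cX$ in some $\cX_i$ would embed into $\cA_i$, a lower-rank member of that collection, contradicting minimality. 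The paper's pullback trick thus avoids any hereditariness lemma for the $L_{\gamma,\kappa}$ hierarchy, while your route pays for that lemma up front but gets something reusable in exchange (monotonicity of $r_\kappa$ under embeddings) and decomposes $\cX$ at its own rank rather than through an ambient structure. The one point to handle with care in your hereditariness lemma is exactly the one you flagged: empty traces on blocks and the implicit identification of a linear order with a lexicographical sum when it partitions into suitably ordered convex pieces -- an identification the paper's own proof also uses tacitly. Both arguments then match the three index-set alternatives with conclusions (a), (b), (c) in the same way.
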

\begin{proof}
Let $\mbf{\cY}$ be the collection of $\kappa$-scattered linear orderings of size $\kappa$ which $\cX$ embeds into. Let $\cZ$ be an element of $\mbf{\cY}$ with $r_\kappa(\cZ)$ minimal. Let $\tau\:\cX \to \cZ$ be an embedding. Note $r_\kappa(\cZ) = \alpha +1$ for some ordinal $\alpha$ as no $\kappa$-sized linear ordering embeds into an element of $\cD_\kappa = L_{0, \kappa}$ and $\cZ$ has minimal $\kappa$-scattered construction rank among $\kappa$-scattered linear orderings containing a copy of $\cX$. We now have three cases depending on the form of $\cZ$. \nl\nl
\ul{Case 1:} $\cZ \in L_{\alpha, \kappa}^{Y}$ and $Y \in \cD_\kappa$\nl
There are then $(\cA_i)_{i \in Y} \subseteq L_{\alpha, \kappa}$ such that $\cZ = \sum_{i \in Y}\cA_i$.  Let  $\cX_i = \tau^{-1}(\cA_i)$. Note that if $\cX$ embedded into $\cX_i$ then $\cX$ would embed into $\cA_i$ contradicting the minimality of $r_\kappa(\cZ)$. Therefore $\cX_i$ satisfies condition (a) for all $i \in Y$. \nl\nl
\ul{Case 2:} $\cZ \in L_{\alpha, \kappa}^{\kappa}$\nl
There are then $\{\cA_i\}_{i \in \kappa} \subseteq L_{\alpha, \kappa}$ such that $\cZ = \sum_{i \in \kappa} \cA_i$. For $i \in \kappa$ let  $\cX_i = \tau^{-1}(\cA_i)$. Note that if $\cX$ embedded into $\cX_i$ then $\cX$ would embed into $\cA_i$ contradicting the minimality of $r(\cZ)$. Therefore $(\cX_i)_{i \in\kappa}$ satisfy condition (b). \nl\nl
\ul{Case 3:} $\cZ \in L_{\alpha, \kappa}^{\kappa^*}$\nl
There are then $\{\cA_i\}_{i \in \kappa} \subseteq L_{\alpha, \kappa}$ such that $\cZ = \sum_{i \in \kappa^*} \cA_i$. For $i \in \w$, let  $\cX_i = \tau^{-1}(\cA_i)$. Note that if $\cX$ embedded into $\cX_i$, then $\cX$ would embed into $\cA_i$ contradicting the minimality of $r(\cZ)$. Therefore $(\cX_i)_{i \in\kappa}$ satisfy condition (c). 
\end{proof}

We therefore have the following theorem. 
\begin{theorem}
\label{Characterization of kappa=scattered sunflowerable linear orderings}
Suppose $\cX = (X, \leq)$ is a $\kappa$-scattered linear ordering of size $\kappa$. Then the following are equivalent. 
\begin{itemize}
\item[(a)] $\cX$ is $2$-sunflowerable. 

\item[(b)] $\cX$ is sunflowerable.

\item[(c)] $\cX$ has order type $\kappa$ or $\kappa^*$. 

\end{itemize}
\end{theorem}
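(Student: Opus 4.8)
The plan is to prove (b) $\Rightarrow$ (a), (c) $\Rightarrow$ (b), and (a) $\Rightarrow$ (c). The first is immediate, since $2\in\w$ and being sunflowerable means being $n$-sunflowerable for every $n\in\w$. For (c) $\Rightarrow$ (b), suppose $\cX\cong\kappa$ or $\cX\cong\kappa^{*}$, fix $n\in\w$, and let $\cY\cong\cX$ with underlying set $Y\subseteq\Powerset_{n}(\Set)$. By Theorem~\ref{Constant size sunflower lemma} there is a sunflower $Y_{0}\subseteq Y$ with $|Y_{0}|=|Y|=\kappa$. Since any subset of $\kappa$ of cardinality $\kappa$ has order type $\kappa$ (and likewise any subset of $\kappa^{*}$ of cardinality $\kappa$ has order type $\kappa^{*}$), we get $\cY\rest[Y_{0}]\cong\cX$, so $\cY$ contains a structured sunflower; as $n$ was arbitrary, $\cX$ is sunflowerable.

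The content is (a) $\Rightarrow$ (c). Assume $\cX$ is $2$-sunflowerable; by Proposition~\ref{2-sunfloweable implies indivisible} it is indivisible, so alternative (a) of Proposition~\ref{Main scattered lemma for kappa partitions} is impossible---it would partition $X$ into fewer than $|X|$ pieces, with no piece containing a copy of $\cX$. Alternatives (b) and (c) of that proposition are interchanged by reversing the order, so it suffices to treat alternative (b): $\cX=\sum_{i\in\kappa}\cX_{i}$ with no $\cX_{i}$ containing a copy of $\cX$. Deleting empty blocks---and invoking indivisibility once more, since fewer than $\kappa$ nonempty blocks would again give a forbidden partition---we may assume every $\cX_{i}$ is nonempty, the surviving index set being a subset of $\kappa$ of cardinality $\kappa$, which we relabel as $\kappa$.

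The key step is to realize $\cX$ on $2$-element sets so that the blocks become ``independent sunflowers''. Choose pairwise distinct elements $c_{i}\in\Set$ $(i\in\kappa)$ and pairwise distinct elements $d^{i}_{x}\in\Set$ $(i\in\kappa,\ x\in X_{i})$, with all the $c$'s and $d$'s distinct from one another; realize $\cX_{i}$ as the linear ordering on $\{\{c_{i},d^{i}_{x}\}\st x\in X_{i}\}$ for which $x\mapsto\{c_{i},d^{i}_{x}\}$ is an isomorphism, and let $\cY$ be the lexicographic sum over $\kappa$ of these realized blocks, so $\cY\cong\cX$ and the universe of $\cY$ lies in $\Powerset_{2}(\Set)$. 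By $2$-sunflowerability there is a structured sunflower $\cZ\subseteq\cY$, with common core $r$, such that $\cY\rest[Z]\cong\cX$. If some block satisfied $|Z\cap X_{i}|\ge 2$, then two of its members would meet exactly in $\{c_{i}\}$, forcing $r=\{c_{i}\}$; but every member of a different block $X_{j}$ is disjoint from every member of $X_{i}$, so (as $r$ is the nonempty core of the sunflower $Z$) $Z$ could meet no block besides $X_{i}$, whence $Z\subseteq X_{i}$ and $\cX\cong\cY\rest[Z]$ would embed into $\cX_{i}$---contrary to the choice of the $\cX_{i}$. Hence $Z$ meets each block in at most one point, so $\cY\rest[Z]$ is isomorphic to the order type of $S=\{i\in\kappa\st Z\cap X_{i}\neq\emptyset\}$; since $S\subseteq\kappa$ and $|S|=|Z|=|X|=\kappa$, this order type is $\kappa$, and so $\cX\cong\kappa$. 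Alternative (c) yields $\cX\cong\kappa^{*}$ by the mirror argument.

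The only genuinely new ingredient is this ``independent sunflower'' realization together with the observation that, since distinct blocks have pairwise-disjoint members while the members within a single block share a singleton core, any structured sunflower inside $\cY$ is forced to be a transversal of the blocks, and hence to have order type $\kappa$. Everything else is bookkeeping around Propositions~\ref{2-sunfloweable implies indivisible} and~\ref{Main scattered lemma for kappa partitions} and the \Erdos--Rado lemma. The point needing the most care is that (c) $\Rightarrow$ (b) uses Theorem~\ref{Constant size sunflower lemma} in the form that a family of bounded-size sets of cardinality $\kappa$ contains a sunflower of cardinality $\kappa$, so that, viewed as a suborder of $\kappa$ or of $\kappa^{*}$, it automatically has the right order type.
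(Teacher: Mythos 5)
Your proof is correct and takes essentially the same route as the paper: the same realization of the decomposition supplied by Proposition \ref{Main scattered lemma for kappa partitions} on two-element sets (each element paired with its block's index, so members of one block share a singleton core while distinct blocks are disjoint), and the same dichotomy forcing any structured sunflower either into a single block (impossible, since no block embeds $\cX$) or onto a transversal whose order type is that of the index order. The only difference is organizational: you prove (a)$\Rightarrow$(c) directly, invoking Proposition \ref{2-sunfloweable implies indivisible} to rule out the small-index case and empty-block degeneracies, whereas the paper argues the contrapositive and handles all three cases of the decomposition uniformly without needing indivisibility.
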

\begin{proof}
It is immediate that (b) implies (a). Suppose (c) holds. First note that any subset of $\kappa$ of size $\kappa$ has order type $\kappa$ and any subset of $\kappa^*$ of size $\kappa$ has order type $\kappa^*$. Therefore  both $\kappa$ and $\kappa^*$ are sunflowerable by Theorem \ref{Constant size sunflower lemma}. Hence (c) implies (b).

Now suppose (c) fails. By Proposition \ref{Main scattered lemma for kappa partitions}, there is a linear order $\cY$ such that either $|Y| < \kappa$ or the order type of $\cY$ is $\kappa$ or $\kappa^*$ and there is a collection $(\cX_i)_{i \in Y}$ such that $\cX = \sum_{i \in \cY} \cX_i$ but $\cX$ does not embed into any $\cX_i$. Let $(X_i, \leq) = \cX_i$. We can assume, without loss of generality,  that $X_i \cap X_j = \emptyset$ for all distinct $i, j \in Y$ and $X_i \cap Y = \emptyset$ for all $i \in Y$.  
   
Let $Z = Y \cup (\bigcup_{i \in Y} X_i)$. Note $|Z| = |X| = \kappa$. Let $A \subseteq \Powerset_2(Z)$ be the set $\{\{i, x\} \st i \in \kappa, x \in X_i\}$. Say that $\{i, x\} \leq_A \{j, y\}$ if and only if either $i  < j$ or $i = j$ and $x \leq y$. We then have $(A, \leq_A) \cong (X, \leq)$. 

Suppose, to get a contradiction, that $A_0 \subseteq A$ is a sunflower with $(A_0, \leq_A) \cong \cX$. Suppose $\{i, x\}, \{j, y\} \in A_0$ with $i, j \in Y$. If $\{i, x\} \cap \{j, y\} \neq \emptyset$ then $i = j$ and $A_0 \subseteq \{\{i, x\} \st x \in X_i\}$. But $\cX \cong (A_0, \leq_A)$ and so $\cX$ embeds into $\cX_i$ getting us our contradiction. 

Therefore we must have $\{i, x\} \cap \{j, y\} = \emptyset$. In particular, for all distinct $\{i, x\}, \{j, y\} \in A_0$ we have $i \neq j$. Therefore the order type of $(A_0, \leq_A)$ must be a suborder type of $Y$. But then the order type of $(A_0, \leq_A)$ is either size $< \kappa$ or of order type $\kappa$ or $\kappa^*$. This implies $\cX$ is either of size $< \kappa$ or of ordertype $\kappa$ or $\kappa^*$, contradicting our assumption on $\cX$. 

Therefore $\cX$ is not $2$-sunflowerable and (a) fails. Hence (a) implies (c).  
\end{proof} 

In the case when there is a saturated model $\Rationals_\kappa$ of size $\kappa$ of the theory of dense linear orderings we have a weakening of the notion of scattered. 

\begin{definition}
Let $\Rationals_\kappa$ be the unique (up to isomorphism) saturated model of the theory of dense linear orderings without endpoints of size $\kappa$. We say a linear ordering $(X, \leq)$ is \defn{$\Rationals_\kappa$-scattered} if there does not exist a subset $Y \subseteq X$ such that $(Y, \leq) \cong \Rationals_\kappa$. 
\end{definition}

The following is immediate by Lemma \ref{kappa-dense linear orderings are sunflowerable} and the fact that all linear orders of size $\kappa$ embed into $\Rationals_\kappa$. 

\begin{proposition}
If $\kappa$ is regular and $\cX$ is not $\Rationals_\kappa$-scattered then $\cX$ is sunflowerable. 
\end{proposition}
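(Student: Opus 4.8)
The plan is to exploit the fact that $\cX$ is mutually embeddable with $\Rationals_\kappa$ and to transport sunflowers back and forth along the two embeddings. First I would assemble the ingredients. Since $\cX$ is not $\Rationals_\kappa$-scattered, by definition there is a subset of $X$ carrying a copy of $\Rationals_\kappa$, so $\Rationals_\kappa$ embeds into $\cX$; and since $\cX$ is a linear ordering of size $\kappa$, the cited fact that every linear ordering of size $\kappa$ embeds into $\Rationals_\kappa$ furnishes an embedding of $\cX$ into $\Rationals_\kappa$ as well. Moreover $\kappa$ is regular and $\Rationals_\kappa$ is a $\kappa$-saturated model of the theory of dense linear orderings without endpoints of size $\kappa$, so Lemma \ref{kappa-dense linear orderings are sunflowerable} gives that $\Rationals_\kappa$ is sunflowerable, hence $n$-sunflowerable for every $n \in \w$.

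Next I would check $n$-sunflowerability of $\cX$ for an arbitrary fixed $n \in \w$. Let $\cY \cong \cX$ with $Y \subseteq \Powerset_n(\Set)$; the goal is a structured sunflower inside $\cY$. Composing the embedding $\Rationals_\kappa \hookrightarrow \cX$ with the isomorphism $\cX \cong \cY$ produces $W \subseteq Y$ with $\cY\rest[W] \cong \Rationals_\kappa$. Since $W \subseteq \Powerset_n(\Set)$ and $\Rationals_\kappa$ is $n$-sunflowerable, there is a sunflower $W_0 \subseteq W$ with $\cY\rest[W_0] \cong \Rationals_\kappa$. Composing the embedding $\cX \hookrightarrow \Rationals_\kappa$ with the isomorphism $\Rationals_\kappa \cong \cY\rest[W_0]$ then produces $W_1 \subseteq W_0$ with $\cY\rest[W_1] \cong \cX \cong \cY$. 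Any subset of a sunflower is again a sunflower with the same common intersection (the defining clause only constrains pairs of distinct elements), so $W_1$ is a sunflower, and therefore $\cY\rest[W_1]$ is a structured sunflower in $\cY$. As $n$ was arbitrary, $\cX$ is sunflowerable.

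As the statement is flagged as immediate, I do not expect a real obstacle; the only point needing care is the order of operations. It is tempting but circular to apply the sunflowerability of $\cX$ directly to $\cY$; instead one must first locate a genuine copy of $\Rationals_\kappa$ inside $\cY$, invoke Lemma \ref{kappa-dense linear orderings are sunflowerable} to thin that copy to a sunflower, and only then recover a copy of $\cX$ from within the sunflower, using both that subsets of sunflowers remain sunflowers and that for linear orderings every subset is a substructure.
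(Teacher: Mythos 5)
Your argument is correct and is exactly the route the paper takes: the paper treats the proposition as immediate from Lemma \ref{kappa-dense linear orderings are sunflowerable} together with the fact that every linear order of size $\kappa$ embeds into $\Rationals_\kappa$, and your write-up simply carries out that sandwich (locate a copy of $\Rationals_\kappa$ inside $\cY$, thin it to a sunflower, then recover a copy of $\cX$ inside the sunflower, noting that subsets of sunflowers are sunflowers). No gaps; your care about the order of operations, and the implicit use of $|X|=\kappa$ from the section's standing convention, match the intended argument.
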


In the case when $\kappa = \w$ there is a unique $\w$-dense linear ordering (up to isomorphism), i.e. $(\Rationals, \leq)$. Further being $\Rationals$-scattered is the same as being scattered. Therefore we have the following characterization of sunflowerable countable linear orderings. 

\begin{theorem}
\label{Characterizing sunflowrable countable linear orderings}
Suppose $\cX$ is a countably infinite linear ordering. Then the following are equivalent. 
\begin{itemize}
\item $\cX$ is $2$-sunflowerable. 

\item $\cX$ is sunflowerable. 

\item One of the following is true of $\cX$
\begin{itemize}
\item $\cX$ is isomorphic to $(\w, \leq)$, 

\item $\cX$ is isomorphic to $(\w^*, \leq)$, 

\item $(\Rationals, \leq)$ embeds into $\cX$. 
\end{itemize}
\end{itemize}

\end{theorem}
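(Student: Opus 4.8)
The plan is to establish the cycle of implications $(\mathrm{b})\Rightarrow(\mathrm{a})\Rightarrow(\mathrm{c})\Rightarrow(\mathrm{b})$, where $(\mathrm a)$, $(\mathrm b)$, $(\mathrm c)$ name the three bulleted conditions in the order listed (so $(\mathrm a)$ is ``$\cX$ is $2$-sunflowerable'', $(\mathrm b)$ is ``$\cX$ is sunflowerable'', and $(\mathrm c)$ is the trichotomy $\cX\cong(\w,\leq)$, $\cX\cong(\w^*,\leq)$, or $(\Rationals,\leq)$ embeds into $\cX$). The implication $(\mathrm b)\Rightarrow(\mathrm a)$ is immediate, since by definition a sunflowerable structure is $n$-sunflowerable for every $n\in\w$, in particular for $n=2$.

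For $(\mathrm a)\Rightarrow(\mathrm c)$ I would split on whether $\cX$ is scattered. If $\cX$ is not scattered, then since (as noted just before the theorem) being $\Rationals$-scattered coincides with being scattered, there is a suborder of $\cX$ isomorphic to $(\Rationals,\leq)$, so the third alternative of $(\mathrm c)$ holds. If $\cX$ is scattered, then $\cX$ is an $\w$-scattered linear ordering of size $\w$, so Theorem \ref{Characterization of kappa=scattered sunflowerable linear orderings} applies with $\kappa=\w$; its equivalence $(\mathrm a)\Leftrightarrow(\mathrm c)$ there then turns the $2$-sunflowerability of $\cX$ into the statement that $\cX$ has order type $\w$ or $\w^*$, giving one of the first two alternatives of $(\mathrm c)$.

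For $(\mathrm c)\Rightarrow(\mathrm b)$: if $\cX\cong(\w,\leq)$ or $\cX\cong(\w^*,\leq)$, then $\cX$ is $\w$-scattered of size $\w$ with order type $\kappa$ or $\kappa^*$ for $\kappa=\w$, so Theorem \ref{Characterization of kappa=scattered sunflowerable linear orderings} (implication $(\mathrm c)\Rightarrow(\mathrm b)$ there) gives that $\cX$ is sunflowerable. If instead $(\Rationals,\leq)$ embeds into $\cX$, then $\cX$ is not $\Rationals_\w$-scattered, and since $\w$ is regular the Proposition immediately preceding the theorem---that any linear ordering which fails to be $\Rationals_\kappa$-scattered is sunflowerable when $\kappa$ is regular, itself deduced from Lemma \ref{kappa-dense linear orderings are sunflowerable}---gives that $\cX$ is sunflowerable.

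I do not expect a genuinely hard step: the argument is a case split packaging the $\kappa$-scattered characterization with the non-scattered case. The only point needing a line of justification is that, for countable $\cX$, ``$\cX$ is not scattered'' is the same as ``$(\Rationals,\leq)$ embeds into $\cX$'': a countable $\w$-dense suborder, after deleting its least and greatest elements if present, is a countably infinite dense linear order without endpoints, hence isomorphic to $(\Rationals,\leq)$.
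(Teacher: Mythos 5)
Your proof is correct and follows essentially the same route as the paper, which simply declares the theorem immediate from Lemma \ref{kappa-dense linear orderings are sunflowerable}, Theorem \ref{Characterization of kappa=scattered sunflowerable linear orderings}, the fact that a non-scattered order contains a copy of $(\Rationals,\leq)$, and the embedding of countable orders into $\Rationals$; you invoke the same ingredients (routing the non-scattered case through the $\Rationals_\kappa$-scattered proposition with $\kappa=\w$, which rests on the same lemma). Your write-up just makes the case split and the ``countable non-scattered implies $\Rationals$ embeds'' step explicit, which the paper leaves implicit.
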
 
\begin{proof}
This is immediate from Lemma \ref{kappa-dense linear orderings are sunflowerable}, Theorem \ref{Characterization of kappa=scattered sunflowerable linear orderings}, the fact that any non-scattered linear ordering embeds $\Rationals$, and that all countable linear orderings embed into $\Rationals$. 
\end{proof} 

Note any $\kappa$-dense but non-sunflowerable linear ordering would have to become $\w$-dense in any larger model of set theory which collapses $\kappa$ to $\w$. Therefore every $\kappa$-dense linear ordering, whether or not it is sunflowerable, is sunflowerable in a forcing extension of the background model of set theory. 

We observe that being sunflowerable is not absolute. 

\begin{example}
Let $\cX = \w_1 \times \Rationals$ ordered lexographically. $\cX$ is $\w_1$-scattered and hence, by Theorem \ref{Characterization of kappa=scattered sunflowerable linear orderings}, not sunflowerable. But $\cX$ is a model of the theory of dense linear orderings without endpoints. Therefore in any larger model of set theory which collapses $\w_1$ to $\w$ we have $\cX$ is isomorphic to $\Rationals$ and hence is sunflowerable. 
\end{example}

\section{Sunflower Property Of An Age}
\label{Sunflower Property Of An Age}

It is well known that, via a compactness argument, the finite Ramsey theorem follows from the infinite Ramsey theorem. A similar argument shows that the sunflower lemma (without bounds) follows from the fact that all infinite sets of finite tuples of the same size contains an infinite sunflower. 

Further, this argument can be extended to sunflowerable structures and gives rise to the notion of an age having the sunflower property. In this section we will make this precise and also givconcrete upper bounds when the age has \DAP(3). 

\begin{remark}
For the rest of this section $\Lang$ will be a countable relational language. 
\end{remark}
\subsection{Properties of Ages}
\label{Properties of Ages}

We first recall various properties which will be important. 

\begin{definition}
An \defn{$\Lang$-age}, $\AgeK$, is a collection of finite structures closed under isomorphism. 

If $\cM$ is a structure then the \defn{age of $\cM$} is the collection of structures isomorphic to a finite substructure of $\cM$.
\end{definition}

\begin{definition}
\label{Age properties}
Suppose $\AgeK$ is an $\Lang$-age. 
\begin{itemize}
\item $\AgeK$ has the \defn{Hereditary Property}, \HP, if whenever $\cA \in \AgeK$ and $A_0 \subseteq A$ then $\cA\rest[A_0] \in \AgeK$. 

\item $\AgeK$ has the \defn{Joint Embedding Property}, \JEP, if whenever $\cA, \cB \in \AgeK$ there is a $\cC \in \AgeK$ and embeddings $i_{\cA}\:\cA \to \cC$ and $i_{\cB}\:\cB \to \cC$. 

\item $\AgeK$ has the \defn{$n$-disjoint amalgamation property} \DAP(n),\ if whenever 
\begin{itemize}
\item $\cA_0, \dots, \cA_{n-1} \in \AgeK$ 

\item for all $i, j \in [n]$ we have $\cA_i \rest[A_i \cap A_j] = \cA_j \rest[A_i \cap A_j]$
\end{itemize}
there is a $\cB \in \AgeK$ such that $\cB \rest[A_i] = \cA_i$ for all $i \in [n]$. 
\end{itemize}
\end{definition}

Note that the collection of $n$-disjoint amalgamation properties forms a hierarchy. In the literature there are two distinct ways to define this hierarchy of \DAP(n), see for example \cite{MR3911109} (which matches Definition \ref{Age properties}) and also the notion of \emph{free} in \cite{Aut(M)} and $n$-DAP in \cite{MR3835071} (which both match the notion of \emph{basic $n$-disjoint amalgamation} in \cite{MR3911109}).  However, the most common uses of these notions occur on ages which have $n$-DAP for all $n \in \w$. And, any such age will have $n$-DAP for all $n$ under one notion if and only if it has $n$-DAP for all $n$ in the other notion (see, for example, \cite[Proposition~3.6]{MR3911109}).  

Note a countable structure $\cM$ has $3$-amalgamation of quantifier-free types (Definition \ref{Definition 3-amalgamation of quantifier-free types}) if and only if its age has \DAP(3). 

Note the following is immediate. 
\begin{lemma}
\label{3-DAP implies amalgamation}
Suppose $\AgeK$ has \DAP(3), $\cA, \cB \in \AgeK$ and $\cA \rest[A \cap B] = \cB\rest[A\cap B]$ then there is a $\cC \in \AgeK$ such that $\cC \rest[A] = \cA$ and $\cC \rest[B] = \cB$. 
\end{lemma}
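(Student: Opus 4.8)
The plan is to deduce Lemma~\ref{3-DAP implies amalgamation} directly from \DAP(3)\ by instantiating the three structures in Definition~\ref{Age properties} appropriately. The key observation is that $2$-amalgamation (ordinary amalgamation over a common substructure) is just the degenerate case of $3$-amalgamation where one of the three structures is redundant.

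\medskip

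First I would set $\cA_0 = \cA$ and $\cA_1 = \cB$, and take $\cA_2 = \cA\rest[A\cap B] = \cB\rest[A\cap B]$; note $A_2 = A\cap B$. I must check the compatibility hypotheses of \DAP(3): for each pair $i,j \in [3]$ we need $\cA_i\rest[A_i \cap A_j] = \cA_j\rest[A_i\cap A_j]$. For $i=0, j=1$ this is exactly the hypothesis $\cA\rest[A\cap B] = \cB\rest[A\cap B]$ of the lemma. For $i=0, j=2$ we have $A_0 \cap A_2 = A \cap (A\cap B) = A\cap B$, and $\cA_0 \rest[A\cap B] = \cA\rest[A\cap B]$ while $\cA_2\rest[A\cap B] = \cA_2 = \cA\rest[A\cap B]$, so these agree; the case $i=1, j=2$ is symmetric. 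Hence the hypotheses of \DAP(3)\ are met, so there is a $\cC \in \AgeK$ with $\cC\rest[A_i] = \cA_i$ for all $i\in[3]$. In particular $\cC\rest[A] = \cA_0 = \cA$ and $\cC\rest[B] = \cA_1 = \cB$, which is exactly the conclusion.

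\medskip

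There is essentially no obstacle here --- the only thing to be slightly careful about is that Definition~\ref{Age properties} demands genuine equality $\cA_i\rest[A_i\cap A_j] = \cA_j\rest[A_i\cap A_j]$ of structures (not merely isomorphism), which is why taking $\cA_2$ to be literally the common restriction (rather than an abstract isomorphic copy) is the right move; since $\Lang$ is relational, restrictions are unambiguous and the verification above is purely bookkeeping. One should also note that $\cA_2 \in \AgeK$ is automatic since it equals $\cA\rest[A\cap B]$ and $\AgeK$ is an age — though strictly this uses \HP; alternatively one can observe $\cA_2$ need not even be named separately if one is willing to let the amalgamation base be built into $\cA_0,\cA_1$. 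I would present the proof in the two-sentence form: "Apply \DAP(3)\ with $\cA_0 = \cA$, $\cA_1 = \cB$, $\cA_2 = \cA\rest[A\cap B]$; the compatibility conditions hold trivially, and the resulting $\cC$ is as desired."
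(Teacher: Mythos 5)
The paper records this lemma as immediate and gives no written proof, so there is nothing to compare line by line; your instantiation of \DAP(3)\ is exactly the intended argument. The one point that needs repair is your choice $\cA_2 = \cA\rest[A\cap B]$: the lemma's hypotheses do not include \HP, and the paper's definition of an age only requires closure under isomorphism, so $\cA\rest[A\cap B]$ need not lie in $\AgeK$. You notice this yourself, and the clean fix is the one you gesture at, namely to let the base be carried by the two given structures: take $\cA_0 = \cA$, $\cA_1 = \cB$, and $\cA_2 = \cA$ (or $\cB$). Then $A_0 \cap A_2 = A$ and $\cA_0\rest[A] = \cA = \cA_2\rest[A]$, while the two remaining compatibility conditions are exactly the hypothesis $\cA\rest[A\cap B] = \cB\rest[A\cap B]$, so \DAP(3)\ applies with no appeal to \HP\ and produces $\cC \in \AgeK$ with $\cC\rest[A] = \cA$ and $\cC\rest[B] = \cB$. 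With that substitution your argument is complete and matches the paper's intent.
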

We call such a $\cC$ in Lemma \ref{3-DAP implies amalgamation} a \defn{disjoint amalgamation} of $\cA$ and $\cB$. 
The following is also immediate. 
\begin{lemma}
If $\AgeK$ has \DAP(3)\ then $\AgeK$ has \JEP. 
\end{lemma}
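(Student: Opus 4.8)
The plan is to derive \JEP\ from Lemma \ref{3-DAP implies amalgamation}, which already isolates exactly the instance of \DAP(3)\ we need. The point is that a disjoint amalgamation of $\cA$ and $\cB$ over the empty intersection is precisely a structure into which both embed, so the only work is the routine step of arranging disjoint underlying sets by passing to an isomorphic copy.

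Concretely, suppose $\cA, \cB \in \AgeK$. First I would pick a set $B'$ with $A \cap B' = \emptyset$ together with a bijection $\varphi \colon B \to B'$, and let $\cB'$ be the unique $\Lang$-structure making $\varphi \colon \cB \to \cB'$ an isomorphism; since $\AgeK$ is closed under isomorphism, $\cB' \in \AgeK$. Because $A \cap B' = \emptyset$, the hypothesis $\cA\rest[A \cap B'] = \cB'\rest[A \cap B']$ of Lemma \ref{3-DAP implies amalgamation} holds trivially (both sides are the empty structure). Applying that lemma yields a structure $\cC \in \AgeK$ with $\cC\rest[A] = \cA$ and $\cC\rest[B'] = \cB'$.

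It remains to read off the embeddings. The set inclusion $A \hookrightarrow C$ is an embedding $i_{\cA}\colon \cA \to \cC$ since $\cC\rest[A] = \cA$, and the composite $\cB \xrightarrow{\varphi} \cB' = \cC\rest[B'] \hookrightarrow \cC$ is an embedding $i_{\cB}\colon \cB \to \cC$. Thus $\cC$ together with $i_{\cA}$ and $i_{\cB}$ witnesses \JEP. There is no real obstacle here: the entire content lies in \DAP(3)\ (equivalently in Lemma \ref{3-DAP implies amalgamation}), and the only thing to be careful about is the bookkeeping needed to assume $A \cap B = \emptyset$ without loss of generality. If one wished to avoid citing Lemma \ref{3-DAP implies amalgamation}, one could instead apply \DAP(3)\ directly to the triple $(\cA, \cB', \cA)$ with the same disjointness setup, since every pairwise intersection among $A$, $B'$, $A$ is either empty or equal to $A$ and the compatibility conditions are immediate.
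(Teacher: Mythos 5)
Your proof is correct and is exactly the routine argument the paper has in mind when it labels this lemma ``immediate'': pass to a disjoint isomorphic copy of $\cB$ and amalgamate over the empty intersection, either via Lemma \ref{3-DAP implies amalgamation} or by applying \DAP(3)\ directly. Nothing further is needed.
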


As we will see later, we will need the following property to hold in any age we care about. 
\begin{definition}
We say an age $\AgeK$ in $\Lang$ has \defn{unique unary quantifier-free types} if for all 
\begin{itemize}
\item $\cA, \cB \in \AgeK$, 

\item $a \in A$ and $b \in B$, 
\end{itemize}
we have $\qftp[\cA](a) = \qftp[\cB](b)$.
\end{definition}

The following lemma will be our main use of \DAP(3). 
\begin{lemma}
\label{Use of 3-DAP}
Suppose 
\begin{itemize}
\item $\AgeK$ has \DAP(3),

\item $\AgeK$ has unique unary types, 

\item $\cA, \cB \in \AgeK$ with $A \cap B = \emptyset$, 

\item $(a_i)_{i \in [n+1]}$ is an enumeration of $\cA$.
\end{itemize}
Then there is a $\cC \in \AgeK$ where 
\begin{itemize}
\item $\cA \subseteq \cC$ and $\cB \subseteq \cC$, and

\item for all $b \in \cB$, $\qftp[\cC]((a_i)_{i \in [n+1]}) = \qftp[\cC]((a_i)_{i \in [n]} \^ b)$.
\end{itemize}
\end{lemma}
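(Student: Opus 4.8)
The plan is to produce $\cC$ with underlying set $A \cup B$ by a finite chain of disjoint amalgamations, adding the elements of $B$ one at a time. Write $A = \{a_0, \dots, a_n\}$ and fix an enumeration $b_0, \dots, b_m$ of $B$. For each $k \le m$, let $\cA'_k$ be the isomorphic copy of $\cA$ with underlying set $\{a_0, \dots, a_{n-1}, b_k\}$ obtained by renaming $a_n$ to $b_k$ and fixing $a_i$ for $i \in [n]$; it belongs to $\AgeK$ because ages are closed under isomorphism. I would then construct, by induction on $k \le m$, structures $\cC_k \in \AgeK$ with underlying set $A \cup \{b_0, \dots, b_k\}$ such that $\cC_k\rest[A] = \cA$, $\cC_k\rest[\{b_0, \dots, b_k\}] = \cB\rest[\{b_0, \dots, b_k\}]$, $\cC_k\rest[\{a_0, \dots, a_{n-1}, b_j\}] = \cA'_j$ for every $j \le k$, and $\cC_k\rest[A \cup \{b_0, \dots, b_{k-1}\}] = \cC_{k-1}$ (reading $\cC_{-1} := \cA$). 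Granting this, $\cC := \cC_m$ does the job: it has underlying set $A \cup B$; the first two conditions give $\cA \subseteq \cC$ and $\cB \subseteq \cC$; and for $b_j \in B$ the third condition, combined with the renaming isomorphism defining $\cA'_j$ (which sends the tuple $(a_i)_{i \in [n+1]}$ of $\cA$ to $(a_i)_{i \in [n]} \^ b_j$), gives $\qftp[\cC]((a_i)_{i \in [n]} \^ b_j) = \qftp[\cA]((a_i)_{i \in [n+1]}) = \qftp[\cC]((a_i)_{i \in [n+1]})$.

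For the induction step I would feed the three structures $\cC_{k-1}$, $\cB\rest[\{b_0, \dots, b_k\}]$, and $\cA'_k$ — with underlying sets $A \cup \{b_0, \dots, b_{k-1}\}$, $\{b_0, \dots, b_k\}$, and $\{a_0, \dots, a_{n-1}, b_k\}$, whose union is exactly $A \cup \{b_0, \dots, b_k\}$ — into \DAP(3). The real work is checking that these three pairwise agree on pairwise intersections of underlying sets, which I would do using $A \cap B = \emptyset$ and the distinctness of the $b_i$: the first two meet in $\{b_0, \dots, b_{k-1}\}$ and agree there by the induction hypothesis; the first and third meet in $\{a_0, \dots, a_{n-1}\}$ and agree there since $\cC_{k-1}\rest[A] = \cA$ and $\cA'_k$ fixes $a_0, \dots, a_{n-1}$; and the last two meet in the single point $b_k$, where they must agree because $\AgeK$ has unique unary quantifier-free types. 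The amalgam $\cC_k$ produced by \DAP(3) restricts correctly to each input (in particular the coherence condition $\cC_k\rest[A\cup\{b_0,\dots,b_{k-1}\}] = \cC_{k-1}$ is one of its conclusions), and the remaining required restrictions — to $A$, and to $\{a_0, \dots, a_{n-1}, b_j\}$ for $j < k$ — follow from the induction hypothesis because those sets lie inside the underlying set of $\cC_{k-1}$. The base case $k = 0$ is the same computation, now with $\cC_{-1} = \cA$ and the first of the three overlaps empty.

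The step I expect to be the crux is the pairwise-agreement check, and within it the overlap of $\cB\rest[\{b_0, \dots, b_k\}]$ with $\cA'_k$ on the new point $b_k$: that is precisely where the hypothesis of unique unary quantifier-free types is used, since a priori the unary type $\cB$ places on $b_k$ need not coincide with the one $\cA$ places on $a_n$, which would block the amalgamation. Everything else is bookkeeping — chiefly tracking the coherence conditions $\cC_k\rest[A\cup\{b_0,\dots,b_{k-1}\}] = \cC_{k-1}$ so that the final $\cC = \cC_m$ genuinely restricts to each $\cA'_j$. One should also note that each input $\cB\rest[\{b_0,\dots,b_k\}]$ lies in $\AgeK$; this is automatic as soon as $\AgeK$ has \HP, which holds in every setting where the lemma is applied (in particular whenever $\AgeK$ is the age of a structure).
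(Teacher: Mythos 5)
Your proposal is correct and follows essentially the same route as the paper's proof: adding the elements of $\cB$ one at a time and, at each step, applying \DAP(3) to the previously built structure, the restriction of $\cB$ to the elements added so far, and the renamed copy of $\cA$ on $\{a_0,\dots,a_{n-1},b_k\}$, with unique unary quantifier-free types used exactly where you identify it, on the singleton overlap $\{b_k\}$. Your explicit remark that $\cB\rest[\{b_0,\dots,b_k\}]\in\AgeK$ needs \HP\ (or closure under substructure) is a reasonable observation the paper leaves implicit.
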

\begin{proof}
Let $\{b_i\}_{i \in [\ell]}$ be an enumeration of $\cB$. We define a sequence of structures $\cC_i$ for $i \in [\ell] \cup \{-1\}$ as follows. Let $\cC_{-1} = \cA$. If $\cC_{k}$ is defined then we let $\cC_{k+1}\in \AgeK$ be a structure where 
\begin{itemize}
\item[(a)] $C_{k+1} = C_k \cup \{b_{k+1}\}$, 

\item[(b)] $\cC_{k+1} \rest[C_k] = \cC_k$, 

\item[(c)] $\cC_{k+1} \rest[{\{b_r\}_{r \in [k+2]}}] = \cB\rest[{\{b_r\}_{r \in [k+2]}}]$, 

\item[(d)] $\qftp[\cC_{k+1}]((a_i)_{i \in [n+1]}) = \qftp[\cC_{k+1}]((a_i)_{i \in [n]} \^ b_{k+1})$.

\end{itemize}
Note that the structures $\cC_k$ and $\cB\rest[{\{b_r\}_{r \in [k+2]}}]$ agree on their common intersection of $B\rest[{\{b_r\}_{r \in [k+1]}}]$. Similarly if $\cD$ is the structure with underlying set $\{a_i\}_{i \in [n]}\cup \{b_{k+1}\}$ such that $\cD \models \qftp[\cC_{k+1}]((a_i)_{i \in [n]} \^ b_{k+1})=  \qftp[\cC_{k+1}]((a_i)_{i \in [n+1]})$ then $\cD$ and $\cA$ agree on their common intersection which is $\{a_i\}_{i \in [n]}$. Finally note that $\cD$ and $\cB\rest[{\{b_r\}_{r \in [k+2]}}]$ agree on their common intersection of $\{b_{k+1}\}$ as $\AgeK$ has unique unary types. 

We can therefore apply \DAP(3)\ to find such a $\cC_{k+1} \in \AgeK$. Finally let $\cC = \cC_{\ell-1}$.  

\end{proof}

We now recall a couple important properties of ages.

\begin{definition}
We say an age $\AgeK$ is \defn{indivisible} at $\cA \in\AgeK$ if for all $n$ there is a $\cB \in \AgeK$ such that whenever $\alpha\:B \to [n+1]$ is a map, then there is an $\cA^+ \subseteq \cB$ where $\cA \cong \cA^+$ and $|\alpha[A^+]| = 1$. We call $\cB$ a \defn{witness to indivisibility} of $\cA$ in $\AgeK$ of order $n$. 
\end{definition}

When an age $\AgeK$ has unique unary quantifier-free types and \HP\ there is, up to isomorphism, a unique element $\cA_1$ of size $1$. In this situation being indivisible at all $\cB \in \AgeK$ is the same as a class having the $\cA_1$-Ramsey property (see \cite{MR1373655} Chapter 25.5 for a definition of the Ramsey property for an age).

The following lemma is also immediate. 
\begin{lemma}
\label{JEP and indivisible implies unqiue unary qf-types}
If $\AgeK$ has \JEP\ and is indivisible then $\AgeK$ has unique unary quantifier-free types.
\end{lemma}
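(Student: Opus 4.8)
The plan is to argue by contradiction, exploiting indivisibility to rule out the appearance of two distinct unary quantifier-free types in $\AgeK$. Suppose $\AgeK$ does not have unique unary quantifier-free types: there are $\cA, \cB \in \AgeK$, $a \in A$, and $b \in B$ with $\qftp[\cA](a) \neq \qftp[\cB](b)$. First I would apply \JEP\ to $\cA$ and $\cB$, obtaining $\cC \in \AgeK$ with embeddings $i_{\cA}\:\cA \to \cC$ and $i_{\cB}\:\cB \to \cC$. Writing $a' = i_{\cA}(a)$ and $b' = i_{\cB}(b)$, and using that embeddings preserve quantifier-free types, we obtain two distinct unary types, $p = \qftp[\cC](a')$ and $p' = \qftp[\cC](b')$ with $p \neq p'$, both realized inside the single structure $\cC$ (so in particular $a' \neq b'$).

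Next I would invoke indivisibility of $\AgeK$ (at the structure $\cC$, with $n = 1$), producing a witness $\cB^{\ast} \in \AgeK$ of order $1$: for every map $\alpha\:B^{\ast} \to [2]$ there is $\cC^{+} \subseteq \cB^{\ast}$ with $\cC \cong \cC^{+}$ and $|\alpha[C^{+}]| = 1$. I would then color $B^{\ast}$ by setting $\alpha(x) = 0$ when $\qftp[\cB^{\ast}](x) = p$ and $\alpha(x) = 1$ otherwise, take a monochromatic copy $\cC^{+}$ of $\cC$ via an isomorphism $\psi\:\cC \to \cC^{+}$, and derive a contradiction as follows. Since $\cC^{+}$ is a substructure of $\cB^{\ast}$ and the quantifier-free type of a single element is absolute between a structure and its substructures (in a relational language the only atomic formulas in the variable $x$ are $x = x$ and $R(x, \dots, x)$), we have $\qftp[\cB^{\ast}](\psi(a')) = \qftp[\cC^{+}](\psi(a')) = \qftp[\cC](a') = p$, so $\alpha(\psi(a')) = 0$; likewise $\qftp[\cB^{\ast}](\psi(b')) = p' \neq p$, so $\alpha(\psi(b')) = 1$. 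Hence $|\alpha[C^{+}]| \geq 2$, contradicting monochromaticity.

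There is no real obstacle here; the only care needed is the routine bookkeeping that quantifier-free $1$-types are preserved under embeddings and absolute between sub- and superstructures, which is exactly where the standing assumption of this section (that $\Lang$ is relational) enters. Everything else is a direct unwinding of the definitions of \JEP\ and of indivisibility at $\cC$. Note in particular that \JEP\ is essential: it is what forces the two unary types $p$ and $p'$ to occur together in a single member of $\AgeK$, so that no copy of that member can be monochromatic for the type-based coloring.
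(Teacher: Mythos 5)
Your proposal is correct and follows essentially the same route as the paper: use \JEP\ to place realizations of two distinct unary quantifier-free types inside a single member of $\AgeK$, then color elements of a purported indivisibility witness by their unary type and observe that no copy of that member can be monochromatic. The paper phrases this contrapositively (no witness at the joint structure exists) rather than via an explicit witness $\cB^{\ast}$ and contradiction, but the argument is the same.
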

\begin{proof}
Suppose $\AgeK$ has \JEP\ but does not have unique unary quantifier-free types. As $\AgeK$ has \JEP\ there must be an $\cA \in \AgeK$ and $a_0, a_1 \in \cA$ with $\qftp[\cA](a_0) \neq \qftp[\cA](a_1)$. 

Now suppose $\cB \in \AgeK$ is any structure containing an isomorphic copy of $\cA$. Let $\alpha\:B \to [2]$ be the map where $\alpha(b) = 0$ if and only if $\qftp[\cB](b) \neq \qftp[\cA](a_0)$. Then there can be no subset $B_0 \subseteq B$ with $|\alpha[B_0]| = 1$ and $\cB\rest[B_0] \cong \cA$. Hence $\AgeK$ is not indivisible. 
\end{proof}

The following definition will be important for determining bounds on the structured sunflower lemma. 

\begin{definition}
Let $\beta\:\w\times \w \to \w \cup \{\infty\}$. We say $\beta$ is a \defn{bound on indivisibility} if $\beta$ is non-decreasing in its first coordinate and if for all $\cA \in \AgeK$ and all $n\in \w$ either 
\begin{itemize}
\item $\AgeK$ is not indivisible at $\cA$ and $\beta(|A|, n) = \infty$, or 

\item there is a $\cB \in \AgeK$ with $|B| \leq \beta(|A|, n)$ and $\cB$ witnessing the indivisibility of $\cA$ in $\AgeK$ of order $n$. 

\end{itemize}

\end{definition}

We will want to use known results about indivisibility, along with Theorem \ref{Structued sunflowerable result}, to provide several examples of sunflowerable ages. However, as indivisibility is most often studied in infinite ultrahomogeneous structures, we present a result connecting the indivisibility of such a structure with the indivisibility of its age. 

\begin{lemma}
\label{Indivisible structure implies indivisible age}
Suppose $\cM$ is a countable structure which is indivisible and $\AgeK$ is the age of $\cM$. Then $\AgeK$ is indivisible.  
\end{lemma}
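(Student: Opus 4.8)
The plan is to prove the contrapositive by a compactness (equivalently König's lemma) argument that threads together local failures of indivisibility in $\AgeK$ into a single failure of indivisibility of $\cM$. So suppose $\AgeK$ is \emph{not} indivisible; then it fails to be indivisible at some $\cA \in \AgeK$, i.e.\ there is an $n \in \w$ such that every $\cB \in \AgeK$ admits a \emph{bad coloring} $\alpha_\cB \colon B \to [n+1]$, meaning one under which no subset of $B$ inducing a substructure isomorphic to $\cA$ is monochromatic. The goal is to manufacture from these a bad coloring of all of $\cM$, which (since $\cA$ embeds into $\cM$) will contradict the hypothesis that $\cM$ is indivisible.

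First I would fix an enumeration $M = \{m_0, m_1, \dots\}$ and set $\cM_k = \cM\rest[\{m_0, \dots, m_{k-1}\}]$, noting each $\cM_k$ is a finite substructure of $\cM$ and hence lies in $\AgeK$. Let $S_k$ be the set of colorings $c \colon \{m_0, \dots, m_{k-1}\} \to [n+1]$ under which no subset inducing a copy of $\cA$ is monochromatic; this set is finite (finitely many colorings of a finite domain) and, by applying the failure hypothesis to $\cB = \cM_k$, nonempty. Moreover, restricting a bad coloring of $\cM_{k+1}$ to $\{m_0,\dots,m_{k-1}\}$ yields a bad coloring of $\cM_k$, since shrinking the domain cannot create a monochromatic copy of $\cA$. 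Thus the $S_k$, ordered by restriction, form an infinite, finitely branching tree, and König's lemma (or compactness of $[n+1]^M$ in the product topology) produces a coloring $c \colon M \to [n+1]$ with $c\rest[\{m_0,\dots,m_{k-1}\}] \in S_k$ for every $k$. Any copy of $\cA$ inside $\cM$ is finite, hence contained in some initial segment $\{m_0,\dots,m_{k-1}\}$; were it monochromatic under $c$, it would contradict $c\rest[\{m_0,\dots,m_{k-1}\}] \in S_k$. So $c$ has no monochromatic copy of $\cA$.

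Finally I would derive the contradiction: $c$ partitions $M$ into the classes $c^{-1}(0), \dots, c^{-1}(n)$, and since $\cM$ is countably infinite this is a partition into fewer than $|M|$ pieces, so indivisibility of $\cM$ gives some class $c^{-1}(i)$ containing a set $Q$ with $\cM\rest[Q] \cong \cM$. As $\cA$ belongs to the age of $\cM\rest[Q]$, some finite $Q' \subseteq Q$ induces a copy of $\cA$, and $Q'$ is monochromatic under $c$ — contradicting the previous paragraph. Hence $\AgeK$ is indivisible at every $\cA \in \AgeK$, i.e.\ $\AgeK$ is indivisible. Essentially every step here is routine; the only points needing care are the König/compactness step (which needs the finiteness of $[n+1]$ and of each domain, and restriction-coherence of "bad"), and the transfer of badness from $\cM$ down to a finite witness, which uses exactly that $\cA$ embeds into every copy of $\cM$.
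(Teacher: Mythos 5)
Your proposal is correct and follows essentially the same route as the paper: the paper also argues by contradiction, forms the finitely branching tree of ``bad'' colorings of an increasing chain of finite substructures of $\cM$, applies \Konig's lemma to obtain a coloring of $\cM$ with no monochromatic copy of $\cA$, and then contradicts the indivisibility of $\cM$ (your final paragraph just spells out explicitly the step the paper leaves implicit, namely that a copy of $\cM$ inside a color class contains a copy of $\cA$).
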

\begin{proof}
Suppose to get a contradiction that $\cM$ is indivisible but $\AgeK$ is not. There must then be an $n\in \w$ 
and an $\cA\in \AgeK$ such that for all $\cB \in \AgeK$ there is an $\alpha\:\cB \to [n+1]$ such that $\alpha^{-1}(i)$ does not contain a copy of $\cA$ for any $i \in [n+1]$. Call such a map $\alpha$ a \defn{bad map}. For each $\cB \in \AgeK$ let $Bad(\cB)$ be the collection of bad maps with range $[n+1]$ and domain $\cB$. Note for any $\cB$, $Bad(\cB)$ is finite. 

Let $(a_i)_{i \in \w}$ be an enumeration of $\cM$ without repetitions where $\cA \cong \{a_i\}_{i < |\cA|}$. For $k \in \w$ let $\cB_k = \cM \rest[\{a_i\}_{|\cA| + k}]$. Let $Bad = \bigcup_{k \in \w} Bad(\cB_k)$. Then $(Bad, \subseteq)$ is an infinite tree which is finitely branching. Therefore by \Konig's lemma there must be an infinite branch $(\alpha_k)_{i \in \w} \subseteq Bad$. If $\alpha = \bigcup_{k \in \w} \alpha_k$ we have $\alpha\:\cM \to [n+1]$ is a map such that $\alpha^{-1}(i)$ does not contain a copy of $\cA$ for any $i \in [n+1]$. But this contradicts the indivisibility of $\cM$. 
\end{proof}

\subsection{Sunflower Property}
\label{Age Structured Sunflower Section}

We now introduce the notion of the sunflower property. 
 
\begin{definition}
\label{structured n-sunflower property}
We say an age $\AgeK$ has the \defn{$n$-sunflower property} if for every $\cA \in \AgeK$ there is a $\cB \in \AgeK$ such that whenever $\cB' \cong \cB$ and $B' \subseteq \Powerset_{n}(\Set)$ then there is a $\cA' \subseteq \cB'$ such that $\cA' \cong \cA$ and $A'$ is a sunflower. We say $\cB$ \defn{witnesses} that $\AgeK$ has the $n$-sunflower property at $\cA$. 

If $\AgeK$ has the $n$-sunflower property for all $n$ then we say it has the \defn{sunflower property}. 
\end{definition}

Note because all elements of $\AgeK$ are finite, for each $\cA \in \AgeK$ with $A \subseteq \Powerset_n(\Set)$ we have $|\bigcup A| < \w$. Therefore there is an $\cA' \in \AgeK$ with $A' \subseteq \Powerset_n(\w)$ and an injective map $\alpha\: \bigcup A \to \w$ such that if $\alpha'\: A \to \Powerset_n(\w)$ is the map where for all $a\in A$,  $\alpha'(a) = \{\alpha(x) \st x \in a\}$ then $\alpha'$ is an isomorphism from $\cA$ to $\cA'$. Therefore, if we so choose, we could assume without loss of generality that all elements of $\AgeK$ in Definition \ref{structured n-sunflower property} have underlying sets contained in $\Powerset_n(\w)$.

\begin{example}
Let $\AgeK_{\text{LO}}$ be the class of linear orders. If $\cA, \cB\in \AgeK_{\text{LO}}$ and $\cA$ is of size $k$ then any subset of $\cB$ of size $k$ is isomorphic to $\cA$. Therefore, by the Theorem \ref{Constant size sunflower lemma}, $\AgeK_{\text{LO}}$ has the sunflower property. 
\end{example}

We now show the sunflower properties form a hierarchy.

\begin{lemma}
If $n \in \w$ and $\AgeK$ has the $n+2$-sunflower property, then $\AgeK$ has the $n+1$-sunflower property. 
\end{lemma}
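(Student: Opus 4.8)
The plan is to mimic the proof of Lemma~\ref{n+2-sunflowrable implies n+1-sunflowerable}, transporting that argument from single structures to the ``for every $\cA$ there is a witness $\cB$'' quantifier structure of the sunflower property. The key idea is the same padding trick: adjoin one fresh point to every member of a family of $(n+1)$-sets to turn it into a family of $(n+2)$-sets, apply the hypothesis there, and pull the resulting structured sunflower back.

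First I would fix $\cA \in \AgeK$; by the $n+2$-sunflower property there is a $\cB \in \AgeK$ witnessing it at $\cA$, and I claim this same $\cB$ witnesses the $n+1$-sunflower property at $\cA$. So let $\cB' \cong \cB$ with $B' \subseteq \Powerset_{n+1}(\Set)$. Since $B'$ is finite, $\bigcup B'$ is finite, so I can pick a set $x \notin \bigcup B'$; let $B^* = \{b \cup \{x\} \st b \in B'\}$ and let $\alpha \colon B' \to B^*$, $\alpha(b) = b \cup \{x\}$. Because $x \notin \bigcup B'$ the sets $b \cup \{x\}$ for distinct $b$ remain distinct and of size exactly $n+2$, so $\alpha$ is a bijection onto $B^* \subseteq \Powerset_{n+2}(\Set)$, and I transport the $\Lang$-structure along $\alpha$ to get $\cB^*$ with $\alpha \colon \cB' \to \cB^*$ an isomorphism; in particular $\cB^* \cong \cB$. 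Applying the $n+2$-sunflower property of $\cB$ at $\cA$ to $\cB^*$, there is $\cA^* \subseteq \cB^*$ with $\cA^* \cong \cA$ and $A^*$ a sunflower.

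Finally I would set $A' = \alpha^{-1}(A^*) \subseteq B'$. Since $\alpha$ is an isomorphism of $\Lang$-structures, $\cB'\rest[A'] \cong \cB^*\rest[A^*] = \cA^* \cong \cA$. It remains to check $A'$ is a sunflower: if $A^*$ has core $r$, then for distinct $p, q \in A'$ we have $(p \cup \{x\}) \cap (q \cup \{x\}) = r$, hence $x \in r$ (as $x$ lies in both sets) and $p \cap q = r \setminus \{x\}$, which is independent of the choice of $p, q$; so $A'$ is a sunflower with core $r \setminus \{x\}$. (If $|A^*| \le 1$ the sunflower condition is vacuous and so is the one for $A'$.) As $\cB' \cong \cB$ with $B' \subseteq \Powerset_{n+1}(\Set)$ was arbitrary, $\cB$ witnesses the $n+1$-sunflower property at $\cA$; and $\cA$ was arbitrary, so $\AgeK$ has the $n+1$-sunflower property.

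There is essentially no hard step here — the only thing to be careful about is the bookkeeping in the sunflower-core computation (that adding a common fresh element to every set simply adds that element to the core, and removing it recovers a sunflower), plus noting the degenerate case $|A| \le 1$ where the sunflower condition carries no content. This is the direct analogue of the fact that any $n+1$-element subset of the real line is handled by padding to $n+2$ dimensions, exactly as in Lemma~\ref{n+2-sunflowrable implies n+1-sunflowerable}.
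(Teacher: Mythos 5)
Your proposal is correct and follows essentially the same argument as the paper: the same witness $\cB$ is reused, each $(n+1)$-set is padded with a single fresh point $x$ to land in $\Powerset_{n+2}(\Set)$, and the structured sunflower obtained there is pulled back along the bijection, with the core shrinking from $r$ to $r \setminus \{x\}$. Your treatment of the core computation and the degenerate small-sunflower case is, if anything, slightly more careful than the paper's.
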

\begin{proof}
Suppose $\cA \in \AgeK$ and $\cB$ witnesses that $\AgeK$ has the $n+2$-sunflower property at $\cA$. Suppose $\cB' \cong \cB$ and $B' \subseteq \Powerset_{n+1}(\Set)$. Let $X = \bigcup B'$ and $x \not \in X$. Let $\alpha\:\Powerset_{n+1}(X) \to \Powerset_{n+2}(X \cup \{x\})$ be the map where $\alpha(y) = y \cup \{x\}$. Then $\alpha$ is injective. Let $\cB^* = \alpha``[\cB']$. There must then be a subset $\cA^* \subseteq \cB^*$ such that $\cA^*$ is a sunflower and $\cA^* \cong \cA$. Let $\cA' = \alpha^{-1}(\cA^*)$. We then have $\cA' \cong \cA$. Also there is a $d$ such that for distinct $a, b\in \cA^*$ we have $d = a \cap b$. Further we have have $x \in d$. Therefore $\alpha^{-1}(a) \cap \alpha^{-1}(b) = d \setminus \{x\}$ for distinct $a, b$. Hence, as $\alpha\rest[\cA']$ is an isomorphism from $\cA'$ to $\cA^*$, we have the underlying set of $\cA'$ is a sunflower.  
\end{proof}

One of the main reasons why we restricted attention to ages which had unique unary quantifier-free types is that without this property the ages, in general, do not have even the $2$-sunflower property. 

\begin{lemma}
Suppose 
\begin{itemize}
\item $\AgeK$ does not have unique unary quantifier-free types, 

\item $\AgeK$ has \JEP, 

\item $\AgeK$ has at least one structure of size at least $3$. 
\end{itemize}

Then $\AgeK$ does not have the $2$-sunflower property.
\end{lemma}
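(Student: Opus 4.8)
The plan is to refute the $2$-sunflower property at one carefully chosen $\cA \in \AgeK$, whose only relevant features will be that $|A|\ge 3$ and that $\cA$ realizes two distinct unary quantifier-free types. First I would construct $\cA$. Since $\AgeK$ does not have unique unary quantifier-free types, fix $\cC_0\in\AgeK$ and $a_0,a_1\in C_0$ with $\tau_0:=\qftp[\cC_0](a_0)\ne\qftp[\cC_0](a_1)=:\tau_1$, and fix $\cE\in\AgeK$ with $|E|\ge 3$. By \JEP\ there is $\cA\in\AgeK$ with embeddings $\cC_0\hookrightarrow\cA$ and $\cE\hookrightarrow\cA$; then $|A|\ge|E|\ge 3$, and since embeddings preserve quantifier-free types, $\cA$ has an element of type $\tau_0$ and an element of type $\tau_1$. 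In particular $\cA$ is neither ``entirely of type $\tau_0$'' nor ``free of type $\tau_0$''.

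Next, suppose for contradiction that some $\cB\in\AgeK$ witnesses the $2$-sunflower property at $\cA$. Placing $\cB$ on some family of pairwise disjoint $2$-element sets and invoking the hypothesis shows $\cB$ contains a copy of $\cA$; hence $\cB$ has at least one element of type $\tau_0$ and at least one element of type $\ne\tau_0$. Set $V_0=\{b\in B:\qftp[\cB](b)=\tau_0\}$ and $V_1=B\setminus V_0$, so $V_0,V_1\ne\emptyset$. Now I would build the ``bad'' copy $\cB'$: pick distinct points $\alpha,\beta$ and, for each $b\in B$, a fresh point $x_b$ (all the $x_b$ distinct from one another, from $\alpha$, and from $\beta$); put $f(b)=\{\alpha,x_b\}$ if $b\in V_0$ and $f(b)=\{\beta,x_b\}$ if $b\in V_1$. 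Then $f$ is injective, so transporting $\cB$ along $f$ yields $\cB'\cong\cB$ with underlying set $f[B]\subseteq\Powerset_2(\Set)$.

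The heart of the argument is then to check that $\cB'$ has no substructure isomorphic to $\cA$ whose underlying set is a sunflower. Suppose $A'\subseteq B'$ is a sunflower with $\cB'\rest[A']\cong\cA$, and let $T=f^{-1}[A']$, so $\cB\rest[T]\cong\cA$ and $|T|\ge 3$. The core of $A'$ is the common pairwise intersection of its members, and it is contained in every member; since the members are distinct $2$-element sets, the core has size $0$ or $1$. If it has size $1$, its unique point lies in at least two members of $A'$, hence is a point occurring in $\ge 2$ sets of $f[B]$; but each $x_b$ occurs only in the single set $f(b)$, so that point is $\alpha$ or $\beta$. If it is $\alpha$ then $T\subseteq\{b:\alpha\in f(b)\}=V_0$, so every element of $\cB\rest[T]$ has unary type $\tau_0$, contradicting that $\cA\cong\cB\rest[T]$ realizes $\tau_1\ne\tau_0$; symmetrically, core $\{\beta\}$ forces $T\subseteq V_1$ and $\cB\rest[T]$ to omit type $\tau_0$, contradicting that $\cA$ realizes $\tau_0$. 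If instead the core has size $0$, the members of $A'$ are pairwise disjoint; but any two $b,b'\in T$ lying in the same $V_i$ have $\alpha$ or $\beta$ in common, so $f(b)\cap f(b')\ne\emptyset$ --- hence $|T\cap V_0|\le 1$ and $|T\cap V_1|\le 1$, giving $|T|\le 2$, a contradiction. So no such $A'$ exists; $\cB$ is not a witness, and since $\cB$ was arbitrary, $\AgeK$ does not have the $2$-sunflower property at $\cA$, hence does not have the $2$-sunflower property.

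The main obstacle is controlling the \emph{matching} type of sunflower: among families of $2$-element sets, a sunflower of size $\ge 3$ is either a pencil of sets through a single common point (a ``star'') or a pairwise disjoint family (a ``matching''), and the obvious placements --- on disjoint pairs, or all through one point --- make \emph{every} subfamily a sunflower, so they are useless. Routing each of the two classes $V_0,V_1$ through its own single point is exactly what simultaneously confines stars to one class (where the unary-type mismatch with $\cA$ finishes the job) and forces matchings to have size $\le 2$. It is also essential that the partition is into ``type $\tau_0$'' versus its complement and not into all unary-type classes: with three or more nonempty classes one could form a transversal matching of size $3$, which could well induce a copy of $\cA$.
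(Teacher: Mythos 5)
Your proposal is correct and takes essentially the same route as the paper's proof: split $B$ into the elements of unary quantifier-free type $\tau_0$ and the rest, realize $\cB$ as $2$-element sets routing each class through its own distinguished point, and observe that any sunflower of size $\geq 3$ must lie entirely in one class, while a copy of $\cA$ (of size $\geq 3$, realizing both types) cannot. The differences are cosmetic: the paper tags each $b$ by $0$ or $1$ instead of using fresh points $x_b$, leaves the star/matching case analysis implicit, and does not bother to check that both classes are nonempty (which is indeed unnecessary).
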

\begin{proof}
Suppose $\AgeK$ does have the $2$-sunflower property to get a contradiction. By our assumptions on $\AgeK$ there must be an $\cA \in \AgeK$ with $|A| \geq 3$ along with $a, b \in A$ such that $\qftp(a) \neq \qftp(b)$. 

Then there is a $\cB$ witnessing that $\AgeK$ has the $2$-sunflower property at $\cA$. Pick $a \in \cA$ and let $E$ be the collection of elements $c \in B$ such that $\qftp(c) = \qftp(a)$. 

Let $B$ be the underlying set of $\cB$. Let $Y = \{(0, b) \st b \in E\} \cup \{(1, b) \st b \in B \setminus E\}$. Let $\cB'$ be the structure with underlying set $Y$ such that $\pi_1\:Y \to B$ given by projection onto the second coordinate, is an isomorphism. Then every sunflower contained in $Y$ either has size $\leq 2$, is contained in $E$ or is contained in $B \setminus E$. Therefore, as $|\cB| \geq |\cA| \geq 3$, no subset of $\cB'$ is isomorphic to $\cB$ and is a sunflower. 
\end{proof}

\subsection{Sunflower Property from Sunflowerable Structures}

We now show that the age of a countable sunflowerable structure has the sunflower property. This will allow us to give several examples of ages with the sunflower property.

\begin{theorem}                       
\label{Sunflowerable structure has an age with the sunflower property}
Suppose $\cK$ is an age and $n \in \w$. Further suppose $\cM$ is a countable $\Lang$-structure where 
\begin{itemize}
\item the age of $\cM$ is $\cK$, 

\item $\cM$ is $n$-sunflowerable. 

\end{itemize}
Then $\cK$ has the $n$-sunflower property. 

\end{theorem}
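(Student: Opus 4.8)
The plan is to argue by contradiction via a compactness (\Konig's lemma) argument, very closely parallel to the proof of Lemma~\ref{Indivisible structure implies indivisible age}. Suppose $\cK$ fails the $n$-sunflower property, as witnessed by some $\cA \in \cK$. Then for every finite $\cB \in \cK$ there is a copy $\cB' \cong \cB$ with $B' \subseteq \Powerset_n(\Set)$ in which no subset isomorphic to $\cA$ is a sunflower; since relabeling the (finite) ground set $\bigcup B'$ preserves both the isomorphism type of $\cB'$ and the property of being a sunflower, we may take $B' \subseteq \Powerset_n(\w)$. Fix an enumeration $(a_i)_{i\in\w}$ of $M$ without repetition and put $\cM_k = \cM\rest[\{a_0,\dots,a_{k-1}\}]$; each $\cM_k \in \cK$, so each $\cM_k$ has such a bad copy with underlying set in $\Powerset_n(\w)$.

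The core step is to manufacture from these bad copies a single injection $g\colon M \to \Powerset_n(\w)$ with the property that $g[T]$ is not a sunflower for any finite $T \subseteq M$ with $\cM\rest[T]\cong\cA$. Since $\Lang$ is relational (the standing hypothesis of this section), any injection $h$ from a subset of $M$ into $\Powerset_n(\Set)$ automatically induces a structure on its image making $h$ an isomorphism onto it, so for each $k$ the condition ``no copy of $\cA$ sitting inside $h[\{a_0,\dots,a_{k-1}\}]$ is a sunflower'' depends only on $k$ and on the isomorphism type, up to relabeling of $\w$, of the tuple of finite sets $(h(a_0),\dots,h(a_{k-1}))$. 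A $k$-tuple of $n$-element subsets of $\w$ uses at most $kn$ points, so there are only finitely many such relabeling types at each level $k$; call a type \emph{good} if it satisfies the condition, and note that restriction sends good types at level $k+1$ to good types at level $k$. The bad copy of $\cM_k$ furnishes a good type at level $k$, so the good types form an infinite finitely branching tree; \Konig's lemma yields a coherent branch $(\tau_k)_{k\in\w}$, and a straightforward induction — at stage $k$ extend the partial map by placing the new coordinate according to a representative of $\tau_{k+1}$, relabeling so that it agrees with what has already been built and routing genuinely new ground-set elements to fresh natural numbers — realizes the branch as the desired injection $g$.

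Finally, let $\cM'$ be the structure induced on $M' := g[M] \subseteq \Powerset_n(\Set)$, so $\cM' \cong \cM$ and $g\rest[\{a_0,\dots,a_{k-1}\}]$ has type $\tau_k$ for every $k$. If $\cZ \subseteq \cM'$ were a structured sunflower, then $\cZ \cong \cM'\cong\cM$, so $\cA$, being in the age of $\cZ$, is isomorphic to some finite $\cZ_0 \subseteq \cZ$; the set $Z_0$ is a subset of the sunflower $Z$ and hence itself a sunflower, while $g^{-1}[Z_0]$ is a finite subset of $M$ carrying a copy of $\cA$, so choosing $k$ with $g^{-1}[Z_0] \subseteq \{a_0,\dots,a_{k-1}\}$ contradicts the goodness of $\tau_k$. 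Thus $\cM'$ contains no structured sunflower, contradicting the $n$-sunflowerability of $\cM$, and the theorem follows.

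I expect the main difficulty to be setting up the compactness skeleton correctly: one must pass from embeddings to their relabeling types in order to make the tree finitely branching, and then reconstruct an honest embedding $M \hookrightarrow \Powerset_n(\w)$ from a branch. The relational hypothesis on $\Lang$ is doing essential work throughout, since it is what makes every subset a substructure and hence lets us move freely between $\cM$ and its images among finite sets.
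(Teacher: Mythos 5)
Your proposal is correct and follows essentially the same route as the paper's proof: a \Konig's-lemma compactness argument over finite ``bad'' copies in $\Powerset_n(\Set)$ (the paper uses concrete structures with underlying sets in $\Powerset_n([n\cdot\ell])$ where you use relabeling types, a cosmetic difference), assembling a branch into a copy of $\cM$ with no structured sunflower and then contradicting $n$-sunflowerability via a finite copy of $\cA$ inside the sunflower.
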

\begin{proof}
Note as there is a structure whose age is $\cK$, $\cK$ satisfies (HP) and (JEP). Suppose, to get a contradiction, that $\cK$ does not have the $n$-sunflower property. Then there is some $\cA \in \cK$ such that for every $\cB \in \cK$ there is a structure $\cB' \cong \cB$ with $B' \subseteq \Powerset_n(\Set)$ and whenever $\cA' \cong \cA$ with $\cA' \subseteq \cB'$ we have that $A'$ is not a sunflower. 

Let $(c_i)_{i \in \w}$ be an enumeration of $\cM$. For $\ell \in \w$ let $\cC_\ell$ be the substructure of $\cM$ where $C_\ell = \{c_i\}_{i \in [\ell]}$. Note as the age of $\cM$ is $\cK$ we have $\cC_\ell \in \cK$. 

If $X$ is a set, let $D_\ell^X$ be the collection of structures $\cC'_\ell$ such that $\cC'_\ell \cong \cC_\ell$, $C'_\ell \subseteq \Powerset_{n}(X)$ and no subset of $\cC'_\ell$ isomorphic to $\cA$ is a sunflower. Note for each $X \in \Set$ and each $\cE \in \cD_\ell^X$ we can find a subset $X_0 \subseteq [n \cdot \ell]$ and a bijection $\zeta \:\bigcup E \to X_0$ and an element $\cE_0 \in \cD_\ell^{[n \cdot \ell]}$ such that if $\zeta^* \:\Powerset(\bigcup E) \to \Powerset(X_0)$ is the corresponding bijection then $\zeta^*\:\cE \to \cE_0$ is an isomorphism of structures. In particular every element of $D^X_\ell$ is isomorphic to an element of $D_\ell^{[n \cdot \ell]}$ via an isomorphism which preserves the structure of the underlying set. Therefore for all $\ell \in \w$ if  $D_\ell = D_\ell^{[n \cdot \ell]}$ we have $D_\ell \neq \emptyset$.      

For $\ell \in \w$, we will define a tree $(T_\ell, \subseteq)$ where the $k$th level of the tree is a set $F_{\ell, k}$, which we will define by induction. Let $F_{\ell, 0} = D_\ell$. If $\cE \in F_{\ell, k}$ let $F^*(\cE)$ be the collection of $\cG \in D_{\ell + k+1}$ such that $\cE \subseteq \cG$. We let $F_{\ell, k+1} = \bigcup \{F^*(\cE) \st \cE \in F_{\ell, k}\}$. 

Note that if $\cE \in D_{\ell +k}$ then by permuting the elements $[(\ell + k) \cdot n]$ we can find an isomorphic copy of $\cE^* \cong \cE$ with $\cE^* \in F_{\ell, k}$. Therefore, as $D_{\ell+k} \neq \emptyset$ for all $k\in \w$, we have $F_{\ell, k} \neq \emptyset$ for all $k \in \w$. But then $T_{\ell}$ is infinite.    

For all $k \in \w$, as $D_{\ell + k}$ is finite and so $F_{\ell, k}$ is finite as well. Therefore $(T_\ell, \subseteq)$ is finitely branching. Hence, by \Konig's lemma, we must have an infinite branch $(\cE_i)_{i \in \w}$ in $T_\ell$.  Note that for $k \in \w$, $|E_{k+1} \setminus E_k| = 1$. Let $\cE = \bigcup_{i \in \w} \cE_i$. We then have $\cE \cong \cM$ where the unique element of $E_{k +1} \setminus E_k$ gets mapped to $c_{\ell+k+1}$.  


Because $\cM$ is $n$-sunflowerable, there must be an infinite sunflower $E^- \subseteq E$ such that $\iota\:\cE^- \cong \cM$. As the age of $\cM$ is $\cK$ there must be a subset $A^- \subseteq E^-$ with $\cA^- \cong \cA$. In particular, as $A^-$ is a subset of a sunflower, it is itself a sunflower. 

But there then must be some $k \in \w$ such that $A^- \subseteq E_k$. However $\cE_k \in F_{k, \ell} \subseteq D_{\ell + k}$ and so $\cE_k$ contains no substructure isomorphic to $\cA$ which is also a sunflower, getting us our contradiction. 
\end{proof}

We can use Theorem \ref{Sunflowerable structure has an age with the sunflower property} to show that several ages have the sunflower property. 

\begin{lemma}
The following ages have the sunflower property. 
\begin{itemize}
\item[(a)] The collection of all finite graphs

\item[(b)] The age $\cH_J$ from Example \ref{Complicated hypergraph condition ensuring sunflowerablity}

\end{itemize}

\end{lemma}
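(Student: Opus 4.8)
The plan is to derive both parts directly from Theorem~\ref{Sunflowerable structure has an age with the sunflower property}: if $\cM$ is a countable $\Lang$-structure that is $n$-sunflowerable, then the age of $\cM$ has the $n$-sunflower property. Consequently, to establish the full sunflower property for an age $\cK$ it suffices to exhibit a single countable sunflowerable structure whose age is $\cK$, since ``sunflowerable'' means ``$n$-sunflowerable for every $n$'' and therefore yields the $n$-sunflower property for every $n$ at once. In both cases the ambient language is a finite relational language --- the single binary relation of a graph, respectively the single $k$-ary relation of a $k$-uniform hypergraph --- so the standing hypothesis of this section that $\Lang$ be a countable relational language is satisfied and Theorem~\ref{Sunflowerable structure has an age with the sunflower property} is available.

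For (a) I would take $\cM$ to be the countable Rado graph $\Rado$. Its age is exactly the collection of all finite graphs; moreover $\Rado$ is $\w$-categorical, hence $\w$-saturated, and it admits quantifier-elimination. Therefore Lemma~\ref{Saturated models of the Rado graph are sunflowerable}, applied with the regular cardinal $\kappa = \w$, shows $\Rado$ is sunflowerable, i.e.\ $n$-sunflowerable for every $n \in \w$. Applying Theorem~\ref{Sunflowerable structure has an age with the sunflower property} to $\Rado$ for each $n$ then shows that the collection of all finite graphs has the $n$-sunflower property for every $n$, hence the sunflower property.

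For (b) I would take $\cM$ to be the \Fraisse\ limit $\cG_J$ of $\cH_J$. As recorded in Example~\ref{Complicated hypergraph condition ensuring sunflowerablity}, $\cH_J$ has \HP, \JEP, \SAP\ and (in the case under consideration) \DAP(3), so $\cG_J$ exists, is countable, has age $\cH_J$, and is sunflowerable. Exactly as in (a), Theorem~\ref{Sunflowerable structure has an age with the sunflower property} then gives that $\cH_J$ has the $n$-sunflower property for every $n$, and therefore the sunflower property.

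There is no substantive obstacle here: the real work is contained in the earlier theorems, and this proof is little more than bookkeeping. The only points that require a moment's care are (i) checking that the language is countable and relational so that Theorem~\ref{Sunflowerable structure has an age with the sunflower property} applies, and (ii) in part (b), making sure we are in the regime where $\cG_J$ is actually sunflowerable --- that is, where \DAP(3)\ holds --- which is precisely the situation recorded in Example~\ref{Complicated hypergraph condition ensuring sunflowerablity}.
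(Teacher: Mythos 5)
Your proposal is correct and follows exactly the paper's own route: part (a) is Lemma \ref{Saturated models of the Rado graph are sunflowerable} combined with Theorem \ref{Sunflowerable structure has an age with the sunflower property} applied to the countable Rado graph, and part (b) is Example \ref{Complicated hypergraph condition ensuring sunflowerablity} combined with the same theorem applied to $\cG_J$. The extra checks you record (countable relational language, $\w$-saturation of the Rado graph, \DAP(3)\ in the relevant regime for $\cH_J$) are the same implicit bookkeeping the paper relies on.
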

\begin{proof}
Item (a) follows from Lemma \ref{Saturated models of the Rado graph are sunflowerable} and Theorem \ref{Sunflowerable structure has an age with the sunflower property}, and (b) follows from Example \ref{Complicated hypergraph condition ensuring sunflowerablity} and Theorem \ref{Sunflowerable structure has an age with the sunflower property}. 
\end{proof}

\subsection{\DAP(3)}

Note that from Theorem \ref{Sunflowerable structure has an age with the sunflower property} we can deduce general properties which will ensure an age has the sunflower property. In particular we have the following. 

\begin{theorem}
\label{3-DAP implies sunflower property}
Suppose $\AgeK$ is an age with only countably many isomorphism classes, which has unique unary quantifier-free types, and which has \HP\ and \DAP(3). Then $\AgeK$ has the sunflower property. 
\end{theorem}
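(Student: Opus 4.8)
The plan is to exhibit a countable sunflowerable structure whose age is $\AgeK$ and then invoke Theorem~\ref{Sunflowerable structure has an age with the sunflower property}.

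First I would check that $\AgeK$ is a \Fraisse\ class. Closure under isomorphism is part of the definition of an age, closure under substructure is \HP, and \DAP(3) yields amalgamation (Lemma~\ref{3-DAP implies amalgamation}) and \JEP\ (the lemma immediately following it); by hypothesis there are only countably many isomorphism classes. The statement is vacuous unless $\AgeK$ contains a structure of size $\geq 1$, and otherwise iterating \DAP(3) against fresh one-point structures, together with \HP, shows $\AgeK$ contains structures of every finite size. Hence $\AgeK$ has a \Fraisse\ limit $\cM$, which is countably infinite, ultrahomogeneous, and has age exactly $\AgeK$; in particular $|M|=\w$ is an infinite regular cardinal.

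Were $\cM$ known to be $\w$-saturated, Corollary~\ref{Corollary showing when structure is sunflowerable from 3-DAP} would finish matters at once; but in a countably infinite relational language the \Fraisse\ limit can fail to be $\w$-saturated, so instead I would apply Theorem~\ref{Theorem showing when structure is sunflowerable}, for which I must verify that $\cM$ is indivisible, ultrahomogeneous, and has universal duplication of quantifier-free types. Ultrahomogeneity is built in. For universal duplication, fix $\aa \subseteq \cM$ of length $<\w$ and $a \in \cM \setminus \aa$, and build a copy of $\cM$ inside $B_{\aa,a}$ by a stepwise construction that handles every one-point extension task. Given a finite $\cC_k \subseteq B_{\aa,a}$ and a task $\cD_0 \subseteq \cD_1$ in $\AgeK$ with an isomorphism $\cD_0 \cong \cC_k$, one needs a new point $c'$ of $\cM$ lying in $B_{\aa,a}$ that realizes the prescribed extension of $\cC_k$; the structure on $\aa \cup C_k \cup \{c'\}$ that is needed is the three-way disjoint amalgam of $\cM\rest[\aa \cup C_k]$, the copy of $\cD_1$ extending $\cC_k$, and a copy of $\cM\rest[\aa \cup \{a\}]$ placed on $\aa \cup \{c'\}$. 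The required pairwise agreements hold automatically except on the single new point, where they hold because $\AgeK$ has unique unary quantifier-free types; so \DAP(3) produces this amalgam in $\AgeK$, and the extension property of the \Fraisse\ limit realizes it inside $\cM$, forcing $c' \in B_{\aa,a}$. Iterating over an enumeration of all tasks — possible since $\AgeK$ has countably many isomorphism classes — yields a countable substructure of $\cM$ with age $\AgeK$ and the extension property, hence isomorphic to $\cM$. (In passing this shows $B_{\aa,a}$ is infinite, and, using that $\Lang$ is relational together with Lemmas~\ref{Indivisible implies no constant symbols} and~\ref{Universal duplication of w-quantifier-free types implies choice function}, that $\cM$ restricted to any subset is genuinely a substructure.)

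The main obstacle is indivisibility of $\cM$; this does not follow from Proposition~\ref{Strong universal duplication of qf types implies indivisible}, since that uses $\w$-saturation. The route is to show first that $\AgeK$ is indivisible as an age — a Ramsey-type statement whose engine is \DAP(3) together with unique unary types, with Lemma~\ref{Use of 3-DAP} supplying the crucial step that, inside a suitable extension, lets the last coordinate of a fixed copy of a given $\cA \in \AgeK$ range over an entire prescribed structure — and then to lift indivisibility of $\AgeK$ to indivisibility of $\cM$ by a \Konig's lemma/tree argument that is the converse of Lemma~\ref{Indivisible structure implies indivisible age}. Granting this, Theorem~\ref{Theorem showing when structure is sunflowerable} makes $\cM$ sunflowerable, hence $n$-sunflowerable for every $n \in \w$; and since $\cM$ is countable with age $\AgeK$, Theorem~\ref{Sunflowerable structure has an age with the sunflower property} then yields that $\AgeK$ has the $n$-sunflower property for every $n$, which is exactly the sunflower property.
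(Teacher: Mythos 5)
Your overall skeleton is the same as the paper's: form the \Fraisse\ limit (using that \DAP(3) gives \JEP\ in a relational language) and then invoke Theorem \ref{Sunflowerable structure has an age with the sunflower property}. Where you diverge is the middle step: the paper simply applies Corollary \ref{Corollary showing when structure is sunflowerable from 3-DAP} to the limit, while you, worried (reasonably) that a \Fraisse\ limit over a countably infinite relational language need not be $\w$-saturated, try to route through Theorem \ref{Theorem showing when structure is sunflowerable} instead. Your verification of ultrahomogeneity and of universal duplication of quantifier-free types (the three-structure \DAP(3) amalgam over $\aa$, $C_k$, and the new point, with unique unary types handling the singleton overlap) is fine.

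The genuine gap is the indivisibility hypothesis of Theorem \ref{Theorem showing when structure is sunflowerable}. There is no ``converse of Lemma \ref{Indivisible structure implies indivisible age} by a \Konig's lemma/tree argument'': compactness runs from colorings of the infinite structure down to colorings of its finite substructures, and in the reverse direction indivisibility of the age $\AgeK$ together with a finite coloring of $M$ only yields that some color class contains a copy of every finite member of $\AgeK$ (age-indivisibility of $\cM$), not a monochromatic copy of $\cM$ itself. The implication you assert is false in general: $(\Integers, \leq)$ has as its age the class of all finite linear orders, which is indivisible, yet splitting $\Integers$ into the negative and non-negative parts shows $(\Integers,\leq)$ is divisible. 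So any argument here must use the genericity of $\cM$ in an essential way; in the paper, indivisibility of the relevant structure comes from Proposition \ref{Strong universal duplication of qf types implies indivisible}, which relies on exactly the $\w$-saturation you set aside. Moreover, your preliminary claim that $\AgeK$ itself is indivisible is not a routine consequence of Lemma \ref{Use of 3-DAP}: in the paper this is a highlighted corollary obtained \emph{from} the present theorem via Proposition \ref{Structure 2-sunflower property implies indivisibility} (or cited to external work), so deriving the theorem from it without an independent proof risks circularity. As written, the proposal does not establish that $\cM$ is indivisible, and the proof is incomplete.
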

\begin{proof}
Because we are in a relational language, \DAP(3)\  implies \JEP. Therefore $\AgeK$ has a \Fraisse\ limit $\cG$. But then by Corollary \ref{Corollary showing when structure is sunflowerable from 3-DAP}, $\cG$ is sunflowerable and so by Theorem \ref{Sunflowerable structure has an age with the sunflower property}  $\AgeK$ has the sunflower property. 
\end{proof} 

In the case of ages with \HP\ and \DAP(3)\ we can also give concrete bounds on the size of a structure witnessing the sunflower property. 

\begin{theorem}
\label{Structued sunflowerable result}
Suppose 
\begin{itemize}

\item $\AgeK$ has \HP\ and \DAP(3), 

\item for all $\cA \in \AgeK$ with $|\cA| > 2$, $\AgeK$ is indivisible at $\cA$, 

\item $\beta$ is a bound on indivisibility for $\AgeK$, and 

\item for $\cA \in \AgeK$, $\gamma_\cA$ is the function where
\begin{itemize}
\item $\gamma_{\cA}(1) = |A|$, and 

\item $\gamma_{\cA}(n+1) =  \beta(\gamma_{\cA}(n), (n+1)\cdot |A|)^{\leq |A|}$.
\end{itemize}

\end{itemize}
Then for all $\cA \in\AgeK$ and $n \in \w$ there is a $\cB \in \AgeK$ with $|B| \leq \gamma_{\cA}(n)$ such that $\cB$ witnesses the $n$-sunflower property of $\AgeK$ at $\cA$.
\end{theorem}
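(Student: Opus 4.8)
The plan is to induct on $n$ with $\cA$ fixed. First a preliminary reduction: the hypotheses already force $\AgeK$ to have unique unary quantifier-free types. Indeed, \DAP(3) gives \JEP, and if some $\cA^*\in\AgeK$ of size $>2$ had two elements of distinct unary types, then applying indivisibility at $\cA^*$ to the $2$-colouring ``unary type equals that of a chosen element of $\cA^*$, or not'' would produce a monochromatic copy of $\cA^*$, absurd; propagating along \JEP (amalgamate witnesses of the two types with any structure of size $\ge 3$) then yields unique unary types, unless $\AgeK$ has no structure of size $\ge 3$ at all, in which case every $\cA\in\AgeK$ has $|A|\le 2$, every subset of any $\Powerset_m(\Set)$ of size $\le 2$ is a sunflower, and $\cB=\cA$ witnesses the $n$-sunflower property at $\cA$ for all $n$. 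So assume $k:=|A|>2$. For the base case $n=1$: if $B'\subseteq\Powerset_1(\Set)$ then $B'$ is a family of pairwise disjoint singletons, hence a sunflower, so $\cB=\cA$ works and $|B|=k=\gamma_\cA(1)$.

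For the inductive step, suppose $\cB_n\in\AgeK$ witnesses the $n$-sunflower property at $\cA$ with $|B_n|\le\gamma_\cA(n)$; being a witness, $\cB_n$ contains a copy of $\cA$, so $|B_n|\ge k>2$ and $\AgeK$ is indivisible at $\cB_n$. Fix $\cD\in\AgeK$ witnessing indivisibility of $\cB_n$ of order $(n+1)k$, with $|D|\le\beta(|B_n|,(n+1)k)\le\beta(\gamma_\cA(n),(n+1)k)=:d$ (monotonicity of $\beta$ in its first argument), and fix an enumeration $(a_0,\dots,a_{k-1})$ of $\cA$. I would then build a ``tree-shaped'' $\cB_{n+1}$ of depth $k$: using \DAP(3) --- concretely, repeated applications of Lemma~\ref{Use of 3-DAP}, which adjoins a disjoint copy of $\cD$ all of whose elements realize a prescribed one-step extension of a given quantifier-free type --- construct, by induction on $j<k$, structures $\cB^{(0)}=\cD\subseteq\cB^{(1)}\subseteq\cdots\subseteq\cB^{(k-1)}=:\cB_{n+1}$ in $\AgeK$ so that to every ``branch'' $(w_0,\dots,w_{j-1})$ (a tuple along the tree realizing $\qftp[\cA](a_0,\dots,a_{j-1})$) there hangs a disjoint copy $\cD_{(w_0,\dots,w_{j-1})}$ of $\cD$ each of whose elements $e$ satisfies $\qftp[\cB_{n+1}](w_0,\dots,w_{j-1},e)=\qftp[\cA](a_0,\dots,a_{j-1},a_j)$; passing from $\cB^{(j)}$ to $\cB^{(j+1)}$ means adjoining, one branch at a time via disjoint amalgamation, a fresh copy of $\cD$ per length-$(j+1)$ branch. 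A routine count bounds the number of length-$j$ branches by $d^j$, so $|B_{n+1}|\le\sum_{j=0}^{k-1}d^{\,j+1}\le d^{\,\le k}=\beta(\gamma_\cA(n),(n+1)|A|)^{\le|A|}=\gamma_\cA(n+1)$.

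To verify that $\cB_{n+1}$ witnesses the $(n+1)$-sunflower property at $\cA$, take $\cB'\cong\cB_{n+1}$ with $B'\subseteq\Powerset_{n+1}(\Set)$ and suppose toward a contradiction that $\cB'$ contains no sunflower copy of $\cA$. Transporting the tree structure across the isomorphism, greedily choose pairwise-disjoint $z_0,\dots,z_{k-1}\in B'$ along a branch, realizing $\qftp[\cA](a_0,\dots,a_{k-1})$: having $z_0,\dots,z_{i-1}$ (pairwise disjoint, realizing $\qftp[\cA](a_0,\dots,a_{i-1})$), look at the attached copy $\cD_i\cong\cD$ inside $\cB'$, where every $e\in D_i$ has $\qftp[\cB'](z_0,\dots,z_{i-1},e)=\qftp[\cA](a_0,\dots,a_i)$. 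If some $e\in D_i$ is disjoint from $U_i:=z_0\cup\cdots\cup z_{i-1}$, set $z_i:=e$. Otherwise every $e\in D_i$ meets $U_i$ and $|U_i|\le i(n+1)\le(k-1)(n+1)<(n+1)k$; picking for each $e$ a point of $e\cap U_i$ colours $D_i$ with at most $(n+1)k$ colours, so since $\cD_i\cong\cD$ witnesses indivisibility of $\cB_n$ of order $(n+1)k$ there is a monochromatic copy $\cF\subseteq\cD_i$ of $\cB_n$, all of whose elements contain one fixed point $x\in U_i$. Then $F_x:=\{\,e\setminus\{x\}:e\in F\,\}\subseteq\Powerset_n(\Set)$, carrying the structure transported along $e'\mapsto e'\cup\{x\}$, is a copy of $\cB_n$, so by the $n$-sunflower property at $\cA$ it contains a sunflower copy $A^\circ$ of $\cA$, and then $\{\,a\cup\{x\}:a\in A^\circ\,\}\subseteq B'$ is a sunflower copy of $\cA$ in $\cB'$ (its core is the core of $A^\circ$ together with $x$) --- contradiction. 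If instead the greedy process completes, $\{z_0,\dots,z_{k-1}\}$ is a pairwise-disjoint copy of $\cA$, i.e. a sunflower copy of $\cA$ in $\cB'$ with empty core --- again a contradiction. Hence $\cB'$ contains a sunflower copy of $\cA$.

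The step I expect to be the main obstacle is the construction of $\cB_{n+1}$ inside $\AgeK$: one must arrange, by iterated disjoint amalgamation, that along every branch of length $<k$ there hangs a copy of $\cD$ realizing precisely the next quantifier-free type in the chosen enumeration of $\cA$, with the various copies not interfering --- this is exactly where \DAP(3) (through Lemma~\ref{Use of 3-DAP}, and hence the unique-unary-type fact) is essential. A secondary point needing care is that in the verification the $z_i$ must be chosen along the transported tree rather than as arbitrary elements of $\cB'$, together with the bookkeeping that produces the stated bound $\gamma_\cA(n+1)$.
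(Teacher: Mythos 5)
Your proposal is correct and takes essentially the same route as the paper's proof: induction on $n$, with $\cB_{n+1}$ built as a tree of copies of an indivisibility witness (of order $(n+1)\cdot|A|$) for the $n$-level witness, hung along branches realizing the successive quantifier-free types of an enumeration of $\cA$ via Lemma \ref{Use of 3-DAP} and \DAP(3), and verified by either completing a pairwise-disjoint branch or, when every extension meets the union of the chosen sets, colouring by an intersection point, extracting a monochromatic copy of $\cB_n$ all containing a fixed point, stripping that point and applying the $n$-sunflower property --- this greedy argument is exactly the paper's ``maximal separated sequence'' case analysis, and your unique-unary-types reduction matches Lemma \ref{JEP and indivisible implies unqiue unary qf-types}. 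The only cosmetic omission is the case $|A|\le 2$ when $\AgeK$ does contain structures of size $\ge 3$, which is covered by the same one-line observation (any family of size $\le 2$ is a sunflower) that both you and the paper already make.
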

\begin{proof}
We prove this by induction on $n$. First note that if $n = 1$ and $\cA \in \AgeK$ with $A \subseteq \Powerset_1(\Set)$ then $A$ is a sunflower. Therefore $\cA$ is a witness to this fact and $|A| \leq \gamma_{\cA}(1) = |A|$.


We now assume the result holds for $n$ in order to show it holds for $n+1$. Suppose $\cA \in \AgeK$. If $|A| = 2$ then whenever $\cA^* \cong \cA$ with $A^* \subseteq \Powerset_{n+1}(\Set)$ we have $A^*$ is a sunflower. Therefore $\cA$ witnesses that $\AgeK$ has the $n+1$-sunflower property at $\cA$ and $|A| \leq \gamma_{\cA}(1) \leq \gamma_{\cA}(n+1)$.  

Now suppose $|A| > 2$ and let $(a_i)_{i \in [\ell+1]}$ be an enumeration of $\cA$. For $i \in [\ell]$ let $p_i = \qftp(a_{i+1}/ (a_j)_{j \in [i]})$. Let $\cB^-$ witness that $\AgeK$ has the $n$-sunflower property at $\cA$ and $|B^-| \leq \gamma_{\cA}(n)$. 

Let $\cB \in \AgeK$ be a structure witnessing the indivisibility of $\cB^-$ in $\AgeK$ of order $(n+1)\cdot |A|$. Further suppose $|B| \leq \beta(|B^-|, (n+1)\cdot |A|) \leq \beta(\gamma_{\cA}(n), (n+1)\cdot |A|)$.

We now define, for each $i \in [\ell+1]$, a structure $\cC_i$ along with a collection $V_i$ of ``valid'' sequences of length $i$ with the property that if $\cc \in V_i$ then $\qftp(\cc) = \qftp(\<a_j\>_{j \in [i]})$.  Further we will have $|V_i| \leq |B|^i$ and $|C_i| \leq |B| \cdot \sum_{j < i} |V_j|$. 

Let $\cC_1 = \cA\rest[\{a_0\}]$ and let $V_1 = \{\<a_0\>\}$, i.e. $\<a_0\>$ is the unique valid sequence of length $1$.

Suppose we have defined $\cC_k$ and $V_k$ for $k \in [\ell]$. For every $\cc \in V_k$ we find a structure $\cB_\cc$ such that 
\begin{itemize}
\item $B_\cc \cap C_k = \cc$, 

\item $\cB_\cc \cong \cB$, 

\item for all $b \in \cB_\cc$ we have $\qftp(\cc\^b) = \qftp(\<a_i\>_{i \in [k]})$.  

\end{itemize}

By Lemma \ref{JEP and indivisible implies unqiue unary qf-types}, we have that $\AgeK$ has unique unary quantifier-free types and so we can always find such a $\cB_\cc$ by Lemma \ref{Use of 3-DAP}. Further note we can assume, without loss of generality, that for any two $\cc_0, \cc_1 \in V_k$ that $B_{\cc_0} \cap B_{\cc_1} \subseteq C_k$. 

Let $\cC_{k+1}$ be a disjoint amalgamation of all of these. Note such a disjoint amalgamation exists in $\AgeK$ by Lemma \ref{3-DAP implies amalgamation}. We therefore have $|C_{k+1}| = |C_k| + |B| \cdot |V_k| \leq |B| \cdot \sum_{i \in [k+1]} |V_i|$. Let $V_{k+1} = \{\cc \^b \st \cc \in V_k \text{ and } b \in \cB_{\cc}\}$. We then have $|V_{k+1}| \leq |B| \cdot |V_k| \leq |B|^{k+1}$. In particular this implies $|C_{\ell}|  \leq |B|\cdot \sum_{i \in [\ell+1]} |B|^i \leq \beta(\gamma_{\cA}(n), (n+1)\cdot |A|)^{\leq |A|}$.  

To finish the induction it suffices to prove the following claim. 
\begin{claim}
$\cC_\ell$ witnesses that $\AgeK$ has the $n+1$-sunflower property at $\cA$.
\end{claim}
\begin{proof}
Suppose $\cC^\circ_\ell$ is such that $\alpha\: \cC^\circ_\ell \cong \cC_\ell$ and $C^\circ_\ell \subseteq \Powerset_{n+1}(X)$ for some set $X$. Let $V_\ell^\circ = \{\alpha(\cc) \st \cc \in V_\ell\}$. We now use $V_\ell^\circ$ to attempt to construct a sunflower isomorphic to $\cA$. 

For $x \in X$, let $E_x = \{y \in C^\circ_\ell \st x \in y\}$. Call a sequence $\<e_i\>_{i \in [k]}$ \defn{separated} if for all $x \in X$ $|\{i \st e_i \in E_x\}| \leq 1$. We now have two cases. \nl\nl
\ul{Case $1$:} There is a $\<c_i\>_{i \in [\ell+1]} \in V_{\ell}$ which is separated. \nl
In this case we have the map $a_i \mapsto c_i$ is an isomorphism between $\cA$ and $\cC^\circ_\ell \rest[\{c_i\}_{i \in [\ell+1]}]$. Further, because $\<c_i\>_{i \in [\ell+1]}$ is separated, $c_i \cap c_j = \emptyset$ for $i < j \in [\ell+1]$. Therefore $\{c_i\}_{i \in [\ell+1]}$ is a sunflower. \nl\nl
\ul{Case $2$:} For all $\cc \in V_{\ell}$, $\cc$ is not separated. \nl
Let $P = \{\cc \in \bigcup_{i \in [\ell+1]}V_{i} \st \cc\text{ is separated}\}$. Let $\cc \leq \dd$ if $\cc$ is an initial sequence of $\dd$. $(P, \leq)$ is then a finite partial ordering and so must have a maximal element $\dd$. But by assumption, as $\dd$ is separated, $\dd \not \in V_{\ell}$. 

Let $\dd = \<d_i\>_{i \in [k]}$ where $k \in [\ell]$. Let $X_0 = \bigcup_{i \in [k]} d_i$, let $r = |X_0|$ and let $\gamma\:X_0 \to [r]$ be a bijection. Note $r \leq (n+1) \cdot |A|$. 

As $\dd$ is maximal among $\bigcup_{i \in [\ell+1]} V_i$ which are separated we have for all $b \in\cB_\dd$ that $\dd\^b$ is not separated. Therefore for all $b \in B_\dd$ we have $b \cap X_0 \neq \emptyset$. Let $\zeta\:B_\dd \to [r]$ be any map such that $(\forall b \in B_\dd)\, \gamma^{-1}(\zeta(b)) \in b$. 

But because $\cB$ is a witness to the indivisible of $\cB^-$ in $\AgeK$ of order $(n+1)\cdot |A|$ there must be an $s \in [r]$ and a subset $B_\dd^- \subseteq B_\dd \cap \zeta^{-1}(s)$ with $B_\dd^- \cong \cB^-$. Let $z = \gamma^{-1}(s)$. Therefore, for all $b \in B_\dd^-$ we have $z \in b$. 

Let $G_\dd^- = \{y \setminus \{z\} \st y \in B_\dd^-\}$. We then have the map $\nu_z\:G_\dd^- \to B_{\dd}^-$ where $\nu_z(y) = y \cup \{z\}$ is an isomorphism. Let $\cG_\dd^-$ be the $\Lang$-structure with underlying set $G_\dd^-$ where $\nu_z$ is an isomorphism of $\Lang$-structures. 

Because $\cB^-$ witnesses that $\AgeK$ has the $n$-sunflower property at $\cA$ and $\cG_\dd^- \cong \cB^-$ there must be an $\cA^* \subseteq \cG_\dd^-$ where $\cA^* \cong \cA$ and $A^*$ is a sunflower. 

But then if we let $\cA^\circ$ be $\nu_z[\cA^*]$, we have $\cA^\circ \subseteq \cC_\ell^\circ$ and $\cA^\circ \cong \cA$. But we also have for all $a, b, c \in \cA^*$, $\nu_z(a) \cap \nu_z(b) = (a \cap b) \cup \{z\} = (a \cap c) \cup \{z\} = \nu_z(a) \cap \nu_z(c)$. Therefore $A^\circ$ is also a sunflower. 

Therefore $\cC_\ell$ witnesses that $\AgeK$ has the $n+1$-sunflower property at $\cA$. 
\end{proof} 
\end{proof}

\section{Indivisibility}
\label{Indivisibility}

In this section, we discuss the relationship between indivisibility and being sunflowerable or being an age with a sunflower property. This will allow us to use our results to give new classes of structures which are indivisible. The following results follow from Proposition \ref{2-sunfloweable implies indivisible} and the corresponding facts about sunflowerable structures.

\begin{lemma}
Suppose 
\begin{itemize}

\item $\cM$ is an $\Lang$-structure with $|M|$ a regular cardinal, 

\item $\cM$ is an $|M|$-saturated structure with quantifier-elimination, 

\item $\cM$ has $\DAP(3)$.   
\end{itemize}
Then $\cM$ is indivisible. 
\end{lemma}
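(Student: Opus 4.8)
The plan is to deduce this as an immediate corollary of the sunflowerability results of the previous section together with Proposition \ref{2-sunfloweable implies indivisible}, exactly as the introductory sentence to this section suggests. First I would apply Corollary \ref{Corollary showing when structure is sunflowerable from 3-DAP}: its hypotheses are precisely that $|M|$ is regular, that $\cM$ is $|M|$-saturated with quantifier-elimination, and that $\cM$ has $\DAP(3)$ --- i.e. exactly the hypotheses we are given --- and its conclusion is that $\cM$ is sunflowerable. By the definition of \textit{sunflowerable} ($n$-sunflowerable for every $n \in \w$), $\cM$ is in particular $2$-sunflowerable. Then Proposition \ref{2-sunfloweable implies indivisible} applies verbatim to give that $\cM$ is indivisible.

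Alternatively, and slightly more directly, one can bypass sunflowerability: by Lemma \ref{Saturated quantifier-elimination and 3-amalgamation implies strong universal duplication of qf-types} the three hypotheses yield that $\cM$ has strong universal duplication of quantifier-free types, and then Proposition \ref{Strong universal duplication of qf types implies indivisible} --- whose hypotheses are exactly $|M|$ regular, $\cM$ being $|M|$-saturated with quantifier-elimination, and strong universal duplication of quantifier-free types --- gives indivisibility. Both routes ultimately rest on the transfinite back-and-forth construction already carried out in the proof of Proposition \ref{Strong universal duplication of qf types implies indivisible} (or, on the first route, on the argument of Proposition \ref{2-sunfloweable implies indivisible}), so nothing new needs to be proved here.

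I expect no real obstacle. The only point that warrants a word of care is the standard identification used throughout this section --- that ``$\cM$ has $\DAP(3)$'' means the same as ``$\cM$ has $3$-amalgamation of quantifier-free types'' in the sense of Definition \ref{Definition 3-amalgamation of quantifier-free types} --- since that is the form in which the hypothesis is consumed by Lemma \ref{Saturated quantifier-elimination and 3-amalgamation implies strong universal duplication of qf-types} and Corollary \ref{Corollary showing when structure is sunflowerable from 3-DAP}. Given that, the statement is genuinely a one- or two-line citation of earlier results, and I would write the proof as just that.
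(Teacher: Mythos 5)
Your first route is exactly the paper's own proof: it cites Corollary \ref{Corollary showing when structure is sunflowerable from 3-DAP} together with Proposition \ref{2-sunfloweable implies indivisible}, just as you do. The alternative route you sketch (via Lemma \ref{Saturated quantifier-elimination and 3-amalgamation implies strong universal duplication of qf-types} and Proposition \ref{Strong universal duplication of qf types implies indivisible}) is also sound, but your main argument is correct and matches the paper, so nothing more is needed.
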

\begin{proof}
This follows immediately from Corollary \ref{Corollary showing when structure is sunflowerable from 3-DAP} and Proposition \ref{2-sunfloweable implies indivisible}. 
\end{proof}

Now we show that if an age has the sunflower property it is indivisible.

We now show that being indivisible is a necessary condition for having the $2$-sunflower property. 
\begin{proposition}
\label{Structure 2-sunflower property implies indivisibility}
If $\AgeK$ has the $2$-sunflower property, then $\AgeK$ is indivisible at $\cA$ whenever $\cA \in \AgeK$ with $|A| > 2$. 
\end{proposition}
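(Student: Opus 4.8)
The plan is to show that a single structure, obtained by iterating the $2$-sunflower property $n+2$ times starting from $\cA$, is a witness to the indivisibility of $\cA$ in $\AgeK$ of order $n$; since $n \in \w$ is arbitrary this gives that $\AgeK$ is indivisible at $\cA$. Only the $2$-sunflower property itself is used, applied repeatedly.

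Concretely, I would set $\cB_0 = \cA$ and, for each $k$, choose $\cB_{k+1} \in \AgeK$ witnessing that $\AgeK$ has the $2$-sunflower property at $\cB_k$. Two elementary observations drive the argument. First, applying the witnessing property to an arbitrary copy of $\cB_{k+1}$ whose underlying set lies in $\Powerset_2(\Set)$ produces an isomorphic copy of $\cB_k$ inside $\cB_{k+1}$; hence, by transitivity of "is a substructure of", each $\cB_k$ contains a copy of $\cA$. Second, when $|B_k| > 2$ the structure $\cB_k$ does not witness the $2$-sunflower property at itself: a copy of $\cB_k$ on $2$-element sets has the same cardinality as $\cB_k$, so it can contain a sunflower copy of $\cB_k$ only by being a sunflower, yet there is a copy of $\cB_k$ on $2$-element sets that is not a sunflower (send three of its points to $\{1,2\},\{2,3\},\{1,3\}$ and the rest to pairwise disjoint fresh pairs). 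So $\cB_{k+1} \not\cong \cB_k$, and combined with the first observation this forces $|B_{k+1}| \geq |B_k|+1$. Since $|A| > 2$, induction gives $|B_k| \geq |A| + k$ for all $k$; in particular $|B_{n+1}| \geq |A| + n + 1 > n+1$.

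Now I would prove that $\cB := \cB_{n+2}$ is a witness to the indivisibility of $\cA$ of order $n$. Given a coloring $\alpha\colon B \to [n+1]$, fix $n+1$ distinct "hub" elements $h_0,\dots,h_n$ and pairwise distinct "leaf" elements $\ell_b$ for $b \in B$, with every hub distinct from every leaf, and define $\phi(b) = \{h_{\alpha(b)},\ell_b\}$; this is injective, and transporting the $\Lang$-structure of $\cB_{n+2}$ along $\phi$ gives $\cB' \cong \cB_{n+2}$ with $B' \subseteq \Powerset_2(\Set)$. Since $\cB_{n+2}$ witnesses the $2$-sunflower property at $\cB_{n+1}$, there is a sunflower $S \subseteq B'$ with $\cB'\rest[S] \cong \cB_{n+1}$. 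The key point is the analysis of the core $r$ of $S$: for distinct $\phi(b),\phi(b') \in S$ one has $\phi(b)\cap\phi(b') = (\{h_{\alpha(b)}\}\cap\{h_{\alpha(b')}\})\cup(\{\ell_b\}\cap\{\ell_{b'}\})$, so $r$ is a single hub $\{h_i\}$, a single leaf $\{\ell_b\}$, or empty. The leaf case would force $|S| = 1$, and the empty case would force all elements of $S$ to have pairwise distinct hubs, hence $|S| \leq n+1$; both contradict $|S| = |B_{n+1}| > n+1$. Therefore $r = \{h_i\}$ for some $i$, every element of $S$ has hub $h_i$, so $\phi^{-1}(S) \subseteq \alpha^{-1}(i)$ while $\cB_{n+2}\rest[\phi^{-1}(S)] \cong \cB_{n+1}$. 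As $\cB_{n+1}$ contains a copy of $\cA$, there is $A^+ \subseteq \phi^{-1}(S)$ with $\cB\rest[A^+] \cong \cA$ and $\alpha$ constant (value $i$) on $A^+$, as required.

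The main obstacle — and the reason the naive one-step approach (taking $\cB$ to be a single witness to the $2$-sunflower property at $\cA$) fails — is precisely the empty-core case above: a sunflower copy of $\cA$ inside the colored structure could be a rainbow transversal that picks one element from each of several color classes, and nothing prevents this when $|A| \leq n+1$. Iterating the property and aiming the sunflower at the much larger structure $\cB_{n+1}$, whose size exceeds the number of colors, is exactly what kills the empty-core (and leaf-core) possibilities. The only remaining technical care is the bookkeeping that the iterated witnesses genuinely grow in size, which is handled by the second observation above.
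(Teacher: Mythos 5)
Your proof is correct, and it reaches the statement by a route that differs from the paper's in how more than two colors are handled. The shared germ is the coding of a coloring into $2$-element sets so that the core of a sunflower records color agreement: the paper applies the $2$-sunflower witness at $\cA$ itself, codes a $2$-coloring $\alpha$ of that witness by $b \mapsto \{b,\alpha(b)\}$, observes that a sunflower copy of $\cA$ of size $>2$ must be monochromatic (with only two colors an empty core is impossible by pigeonhole), and then upgrades to order $n$ by the standard color-merging induction, composing indivisibility witnesses (each $\cB_{n+1}$ a two-color witness for $\cB_n$). You instead iterate the $2$-sunflower property itself to manufacture targets $\cB_{n+1}$, each containing a copy of $\cA$ and of size greater than $n+1$, and you handle all $n+1$ colors in one application via the hub/leaf coding, excluding the rainbow (empty-core) case by cardinality rather than by a two-color pigeonhole; no induction on the number of colors is needed. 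The price is your auxiliary growth lemma ($\cB_{k+1}\not\cong\cB_k$, proved by exhibiting a non-sunflower copy of $\cB_k$ on pairs, which is where the hypothesis $|A|>2$ enters for you), which the paper does not need; the benefit is a one-shot argument for arbitrary $n$ that produces a monochromatic copy of the large structure $\cB_{n+1}$, not merely of $\cA$. Two small remarks: the leaf-core case in your analysis cannot actually occur, since for $b \neq b'$ the intersection $\phi(b)\cap\phi(b')$ never contains a leaf, so your dismissal of it is harmless but superfluous; and all you need is $|B_{n+1}|>n+1$, which your estimate $|B_{n+1}|\geq |A|+n+1$ gives with room to spare.
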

\begin{proof}
Suppose $\cA \in \AgeK$ and let $\cB$ witness that $\AgeK$ has the $2$-sunflower property at $\cA$. Note we can assume without loss of generality that $\bigcup B \cap [2] = \emptyset$. Suppose $\alpha\:B \to [2]$. Let $\cB'\in \AgeK$ be the structure where $B' = \{\{b, \alpha(b)\}\st b \in B\}$ and where the projection map $\pi_0\:B' \to B$ is an isomorphism. There must then be a subset $A' \subseteq B'$ with $\cB\rest[A'] \cong \cA$ and $A'$ a sunflower. 

But as $|A| > 2$, all sunflowers in $B'$ are either subsets of $\{\{b, 0\}\st \alpha(b) = 0\}$ or subsets of $\{\{b, 1\} \st \alpha(b) = 1\}$. Therefore we must have $|\alpha``[A']| = 1$.

The result then follows from the following claim. 
\begin{claim}
Suppose for all $\cA \in \AgeK$ there is a $\cB_2 \in \AgeK$ which witnesses the indivisibility of $\cA$ in $\AgeK$ of order $2$. Then $\AgeK$ is indivisible. 
\end{claim}
\begin{proof}
We define $\cB_n$, for $n > 2$, to be such that $\cB_{n+1}$ witness the indivisibility of $\cB_n$ in $\AgeK$ of order $2$. We now prove by induction on $n$ that $\cB_n$ witnesses the indivisibility of $\cA$ in $\AgeK$ of order $n$. 

First note that if $n = 2$ then this follows from our definition of $\cB_2$. Now suppose $\cB_n$ witnesses the indivisibility of $\cA$ in $\AgeK$ of order $n$. 

Let $\alpha\:B_{n+1} \to [n+1]$.
Let $\gamma\:[n+1] \to [2]$ be such that $\gamma(i) =0$ for $i \in [n]$ and $\gamma(n) = 1$. By our definition of $\cB_{n+1}$, there must be a $B_n^* \subseteq B_{n+1}$ such that $\cB_n \cong \cB_{n+1}\rest[B_n^*]$ with $|(\gamma \circ \alpha)``[B_n^*]| = 1$. 

Suppose $B_n^* \subseteq (\gamma \circ \alpha)^{-1}(1)$ then $B_n^* \subseteq \alpha^{-1}(\{n\})$. But by construction there must be an $A^* \subseteq B_n^*$ with $\cB_n^*\rest[A^*] \cong \cA$. Therefore $|\alpha``[A^*]| = 1$ and we are done. 
Now suppose $B_n^* \subseteq (\gamma \circ \alpha)^{-1}(0)$. Then $\alpha\rest[B_n^*]\:B_n^* \to [n]$. Therefore by our inductive hypothesis there must be an $A^* \subseteq B_n^*$ such that $\cA \cong \cB_n^*\rest[A^*]$ and $|\alpha``[A^*]| = 1$. 
\end{proof}
\end{proof}
Note in Proposition \ref{Structure 2-sunflower property implies indivisibility} we cannot remove the assumption that $|A| > 2$ as every subset of $\Powerset_n(\Set)$ of size $2$ is a sunflower. 

As a consequence of Proposition \ref{Structure 2-sunflower property implies indivisibility} we also get the following which was proved in \cite{MR4723481} Proposition 2.23. Rehana Patel, using different methods, has independently shown the following result. 
\begin{lemma}
Suppose $\AgeK$ is an age with only countably many isomorphism classes, which has unique unary quantifier-free types, and which has \HP\ and \DAP(3). Then $\AgeK$ is indivisible. 
\end{lemma}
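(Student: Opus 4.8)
The plan is to obtain this almost immediately from the results already in place, the only extra work being to handle structures of size at most two (which are not covered by Proposition~\ref{Structure 2-sunflower property implies indivisibility}).

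First I would observe that the hypotheses on $\AgeK$ are exactly those of Theorem~\ref{3-DAP implies sunflower property}, so $\AgeK$ has the sunflower property, and in particular the $2$-sunflower property. Applying Proposition~\ref{Structure 2-sunflower property implies indivisibility} then shows that $\AgeK$ is indivisible at every $\cA \in \AgeK$ with $|A| > 2$. So it only remains to verify indivisibility at $\cA$ when $|A| \leq 2$. For $|A| \leq 1$ this is immediate: since $\AgeK$ has unique unary quantifier-free types, every one-element substructure of every member of $\AgeK$ is isomorphic to $\cA$, and a one-element set is trivially monochromatic, so $\cA$ itself witnesses indivisibility at $\cA$ of every order.

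For $|A| = 2$ I would first promote $\cA$ to a three-element structure. Write $A = \{p, q\}$, pick a new point $r$, and let $\cD$ be the copy of $\cA$ with underlying set $\{p, r\}$; because $\AgeK$ has unique unary quantifier-free types, $\cA \rest[\{p\}] = \cD \rest[\{p\}]$, so Lemma~\ref{3-DAP implies amalgamation} yields a disjoint amalgamation, which after restricting to $A \cup D$ via \HP\ gives $\cC \in \AgeK$ with underlying set $\{p, q, r\}$ and $\cC \rest[A] = \cA$. Since $|C| = 3 > 2$, $\AgeK$ is indivisible at $\cC$ by the previous paragraph. Given $n$, I claim any $\cB \in \AgeK$ witnessing the indivisibility of $\cC$ of order $n$ also witnesses the indivisibility of $\cA$ of order $n$: a map $\alpha \colon B \to [n+1]$ yields $\cC^{+} \subseteq \cB$ with $\cC^{+} \cong \cC$ and $|\alpha[C^{+}]| = 1$, and then the image of $\cA$ under an isomorphism $\cC \to \cC^{+}$ is a copy of $\cA$ inside $\cC^{+}$ whose underlying set, being a subset of the monochromatic set $C^{+}$, is again monochromatic.

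I do not expect a genuine obstacle; the content is entirely in the earlier Theorem~\ref{3-DAP implies sunflower property} and Proposition~\ref{Structure 2-sunflower property implies indivisibility}. The only point needing a little care is the reduction for $|A| = 2$ — specifically, confirming that \DAP(3)\ really does produce a three-element extension of $\cA$, and that monochromaticity passes to substructures (it does, since a subset of a monochromatic set is monochromatic). It is also worth recording, as the surrounding text notes, that this recovers \cite{MR4723481}, Proposition~2.23.
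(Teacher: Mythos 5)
Your proposal is correct and follows essentially the same route as the paper, whose proof is simply the citation of Theorem~\ref{3-DAP implies sunflower property} and Proposition~\ref{Structure 2-sunflower property implies indivisibility}. The only difference is that you explicitly patch the cases $|A| \leq 2$ (which Proposition~\ref{Structure 2-sunflower property implies indivisibility} excludes) via unique unary types and a \DAP(3)-extension to a three-element structure, a gap the paper's one-line proof leaves implicit; your handling of it is sound.
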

\begin{proof}
This follows from Theorem \ref{3-DAP implies sunflower property} and Proposition \ref{Structure 2-sunflower property implies indivisibility}
\end{proof}

\section{Conjectures}
\label{Conjectures}

We end with a couple of conjectures. First recall by Lemma \ref{n+2-sunflowrable implies n+1-sunflowerable} that the properties of being $n$-sunflowerable form a hierarchy. However, for all examples in this paper, this hierarchy collapses. This suggests the following conjecture.

\begin{conjecture}
If $\cM$ is a $2$-sunflowerable structure, then $\cM$ is sunflowerable. 
\end{conjecture}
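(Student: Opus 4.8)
The plan is to argue by induction on $n \ge 2$ that a $2$-sunflowerable structure $\cM$ is $n$-sunflowerable, the base case $n=2$ being the hypothesis itself (and $n\le 1$ being immediate). So fix $n\ge 2$, assume $\cM$ is $n$-sunflowerable, recall $\cM$ is indivisible by Proposition \ref{2-sunfloweable implies indivisible}, and suppose $\cY\cong\cM$ with $Y\subseteq\Powerset_{n+1}(\Set)$. For $x\in\bigcup Y$ put $Y^\circ_x=\{y\in Y: x\in y\}$ and $Y_x=\{y\setminus\{x\}: y\in Y^\circ_x\}\subseteq\Powerset_n(\Set)$, and let $\cY_x$ be the structure on $Y_x$ for which $w\mapsto w\cup\{x\}$ is an isomorphism onto $\cY\!\rest[Y^\circ_x]$. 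The key dichotomy is: either (1) some $\cY_x$ has a substructure isomorphic to $\cM$, or (2) none does; and this is exhaustive for the purpose at hand, since any structured sunflower of $\cY$ with a nonempty core $r$ sits inside $Y^\circ_x$ for any $x\in r$, so in case (2) a structured sunflower of $\cY$ must have empty core.

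In case (1), apply the inductive hypothesis to that copy of $\cM$ (whose underlying set lies in $\Powerset_n(\Set)$): it produces $Z_x\subseteq Y_x$ with $\cY_x\!\rest[Z_x]\cong\cM$ and $Z_x$ a sunflower with some core $r$. Then $\{w\cup\{x\}: w\in Z_x\}$ is a sunflower with core $r\cup\{x\}$ and, via the same bijection, is isomorphic to $\cM$, so $\cY$ contains a structured sunflower and we are done. In case (2) one must produce a \emph{pairwise disjoint} copy of $\cM$ inside $\cY$, and here I would imitate the transfinite recursion in the proof of Theorem \ref{Theorem showing when structure is sunflowerable}: relabel so $\bigcup Y\subseteq\kappa:=|M|$, fix enumerations $\langle a_i: i<\kappa\rangle$ of $\cM$, and build pairwise disjoint $\langle z_i: i<\kappa\rangle$ in $Y$ with $\qftp[\cY](\langle z_i\rangle_{i<\alpha})=\qftp[\cM](\langle a_i\rangle_{i<\alpha})$ at every stage $\alpha$. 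At stage $\alpha$, among the elements of $\cY$ realizing the correct quantifier-free type over $\langle z_i\rangle_{i<\alpha}$, partition according to which (if any) point of the $(<\kappa)$-sized set $\bigcup_{j<\alpha}z_j$ the element contains; this partitions a copy of $\cM$ into fewer than $\kappa$ classes, so by indivisibility some class contains a copy of $\cM$, and by the case (2) hypothesis it cannot be one of the classes contained in a $Y^\circ_p$, hence it is the class of elements disjoint from $\bigcup_{j<\alpha}z_j$, any member of which serves as $z_\alpha$. At the end $\{z_i:i<\kappa\}$ is a pairwise disjoint copy of $\cM$.

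The hard part — and, I suspect, the reason this is stated only as a conjecture — is justifying, at each stage of the recursion, that there is a \emph{copy of $\cM$} consisting of elements realizing the prescribed type over $\langle z_i\rangle_{i<\alpha}$; indivisibility can only be invoked on a set that is itself isomorphic to $\cM$, and we also need such elements to exist at all. In Theorem \ref{Theorem showing when structure is sunflowerable} these are exactly what ultrahomogeneity and universal duplication of quantifier-free types provide, but $2$-sunflowerability supplies neither, so the partial copy $\langle z_i\rangle_{i<\alpha}$ may fail to extend inside $\cY$ at all, or extend only to sets containing no further copy of $\cM$. Closing this gap seems to require either extracting a usable homogeneity- or duplication-type consequence from $2$-sunflowerability itself, or replacing the type-driven element-by-element construction with a more flexible argument that does not pin target types down in advance. (There is also the minor point that the partition step in Theorem \ref{Theorem showing when structure is sunflowerable} is phrased using the regularity of $|M|$; one should check whether that is genuinely needed here or can be avoided by partitioning along the points of $\bigcup_{j<\alpha}z_j$ directly. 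For countable $\cM$ regularity is automatic, and for countable linear orders the conjecture is already known by Theorem \ref{Characterizing sunflowrable countable linear orderings}, so the residual difficulty is precisely the homogeneity obstruction just described.)
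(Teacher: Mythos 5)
First, note that the statement you were asked to prove is stated in the paper only as a conjecture; the paper gives no proof of it, so there is nothing of the authors' to compare your argument against, and an honest assessment of your attempt is all that can be offered. Your reduction in case (1) is sound: if some $\cY_x$ contains a substructure isomorphic to $\cM$, the inductive hypothesis applied to that copy (whose underlying set lies in $\Powerset_{n}(\Set)$) yields a sunflower, and adjoining $x$ to every petal preserves both the sunflower condition and the isomorphism type. This is the same bookkeeping that the paper uses in Lemma \ref{n+2-sunflowrable implies n+1-sunflowerable} and inside Theorem \ref{Theorem showing when structure is sunflowerable}, and it is unobjectionable.

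The genuine gap is exactly where you place it, in case (2), and it is worth being precise about why the step fails rather than merely being unfinished. At stage $\alpha$ of your recursion you want to partition ``the elements of $\cY$ realizing the correct quantifier-free type over $\langle z_i\rangle_{i<\alpha}$'' and invoke indivisibility (Proposition \ref{2-sunfloweable implies indivisible}). But indivisibility, as defined in the paper, only applies to partitions of a set carrying a copy of $\cM$; nothing in $2$-sunflowerability guarantees that the set of realizations of that type is nonempty, let alone that it contains a copy of $\cM$. In Theorem \ref{Theorem showing when structure is sunflowerable} these two facts are supplied, respectively, by ultrahomogeneity (the set $C^\circ_\alpha$ is nonempty) and by universal duplication of quantifier-free types (it contains a copy $C_\alpha$ of the structure), and neither property is known to follow from $2$-sunflowerability --- indeed the generic $K_n$-free graphs show that even very homogeneous structures can lack universal duplication. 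So your recursion can stall at the very first nontrivial stage: the partial sequence $\langle z_i\rangle_{i<\alpha}$ may not extend inside $\cY$ with the prescribed type at all, and even if it does, the extension set may fall entirely inside finitely many $Y^\circ_p$ without containing a copy of $\cM$ on which indivisibility could be used. Closing this would require extracting some duplication-like consequence from $2$-sunflowerability itself, or abandoning the type-by-type construction; absent that, what you have is a correct reduction of the conjecture to case (2), not a proof --- which is consistent with the paper leaving the statement open (the only settled instances being, e.g., countable linear orders via Theorem \ref{Characterizing sunflowrable countable linear orderings}).
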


Theorem \ref{Characterizing sunflowrable countable linear orderings} characterizes those countable linear orderings which are sunflowerable. However, when we move to uncountable linear orderings we do not quite have such a characterization as there are linear orderings which are $\Rationals_\kappa$-scattered but not $\kappa$-scattered. We therefore have the following conjecture

\begin{conjecture}
If $\kappa$ is regular then all $\kappa$-dense linear orderings are sunflowerable. 
\end{conjecture}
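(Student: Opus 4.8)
The natural line of attack is to try to obtain each $\kappa$-dense linear ordering as an instance of Theorem~\ref{Theorem showing when structure is sunflowerable}; the task then reduces to verifying its three hypotheses --- indivisibility, ultrahomogeneity, and universal duplication of quantifier-free types --- for an arbitrary $\kappa$-dense linear order $\cX=(X,\leq)$ of size $\kappa$. I would first separate off the endpoints: up to adding or deleting at most two points one may write $\cX$ as $\cX_0$, $1+\cX_0$, $\cX_0+1$, or $1+\cX_0+1$, with $\cX_0$ the endpoint-free $\kappa$-dense core, and I would check that once a structured sunflower copy of $\cX_0$ has been found inside a given copy $\cX'\subseteq\Powerset_n(\Set)$, one reads off a structured sunflower of order type $1+\cX_0$, $\cX_0+1$, or $1+\cX_0+1$ simply by taking a closed ray or closed interval of that sunflower copy --- which works provided every nonempty open ray and open interval of $\cX_0$ still contains a copy of $\cX_0$. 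So everything comes down to proving: \textbf{a $\kappa$-dense linear order without endpoints of size $\kappa$ is indivisible, is ultrahomogeneous, has universal duplication of quantifier-free types, and contains a copy of itself inside each of its nonempty open intervals and one-sided open rays.}

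Ultrahomogeneity is immediate, since the quantifier-free type of a tuple in the pure order language is just its order pattern, and $\kappa$-density together with the absence of endpoints realises every one-point extension of a pattern. Moreover the remaining two properties reduce to a single self-similarity statement: in the order language $B_{\aa,a}$ is always a nonempty open interval or one-sided open ray of $\cX_0$, so universal duplication of quantifier-free types is exactly the assertion that $\cX_0$ embeds into each such interval and ray; and indivisibility should follow from the interval version by the usual argument --- for $\kappa=\w$ it is the classical fact, recoverable from Theorem~\ref{Characterizing sunflowrable countable linear orderings}, that one colour class of any finite partition of $\Rationals$ contains a copy of $\Rationals$. For $\kappa=\w$ the self-similarity itself is Cantor's theorem --- an $\w$-dense countable linear order without endpoints is $(\Rationals,\leq)$, whose open intervals and rays are again copies of $(\Rationals,\leq)$ --- so, together with the endpoint reduction, the plan proves the conjecture when $\kappa=\w$.

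The step I expect to be the real obstacle is precisely this interval self-similarity for $\kappa>\w$. A $\kappa$-dense linear order of size $\kappa$ need not be $\kappa$-saturated, and its open intervals can then fail to contain a copy of it: gluing two $\kappa$-dense sets of reals that do not embed into one another (such mutually non-embeddable pairs exist, at least consistently, and I expect in $\ZFC$ for $\kappa=\w_1$) yields a $\kappa$-dense endpoint-free order $\cX_0$ one of whose half-lines is not isomorphic to $\cX_0$, and such an $\cX_0$ can moreover be partitioned into two pieces neither containing a copy of $\cX_0$, so by Proposition~\ref{2-sunfloweable implies indivisible} it is not even $2$-sunflowerable. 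Thus I would expect a correct proof to have to either strengthen the hypothesis to one forcing the self-similarity --- $\kappa$-saturation, which recovers Lemma~\ref{kappa-dense linear orderings are sunflowerable}, or outright ``$\cX$ embeds into each of its open intervals'' --- or work under an axiom such as Baumgartner's axiom or PFA under which all $\kappa$-dense orders of size $\kappa$ collapse to a single interval-homogeneous isomorphism type. Absent such hypotheses, the likeliest honest outcome is that the conjecture needs this refinement, and the substantive theorem would be a characterization of the sunflowerable linear orders of size $\kappa$ in the spirit of Theorem~\ref{Characterization of kappa=scattered sunflowerable linear orderings}, whose proof would be the real content.
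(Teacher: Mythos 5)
The statement you were asked to prove is one of the paper's closing \emph{conjectures}: the paper offers no proof of it, so there is nothing to compare your argument against, and your proposal, as you yourself acknowledge, does not prove it either. The positive portion of your plan only re-derives what the paper already establishes: the $\kappa$-saturated case is exactly Lemma~\ref{kappa-dense linear orderings are sunflowerable}, and the $\kappa=\w$ case (with the size-$\w$ reading) is already contained in Theorem~\ref{Characterizing sunflowrable countable linear orderings}, since a countable $\w$-dense order is a dense order and hence embeds $(\Rationals,\leq)$; your endpoint reduction is fine but unnecessary there. (A small imprecision: for infinite tuples $\mathbf{a}$ the set $B_{\mathbf{a},a}$ is a convex ``cut'' set rather than an interval or ray with endpoints in the order, though this does not affect your reduction of universal duplication to self-similarity.)

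The genuinely valuable part of your proposal is the obstruction, and it appears to be correct --- indeed it shows that the conjecture, even read as being about $\kappa$-dense orders \emph{of size} $\kappa$, is not a ZFC theorem. Under CH the classical Sierpi\'nski/entangled-set constructions give two $\w_1$-dense sets of reals $A,B$ of size $\w_1$ with no uncountable order-embedding of one into the other; $A+B$ is then $\w_1$-dense of size $\w_1$, and the partition $\{A,B\}$ witnesses that it is not indivisible (a copy of $A+B$ inside $A$ would embed $B$ into $A$, and symmetrically), so by Proposition~\ref{2-sunfloweable implies indivisible} it is not $2$-sunflowerable, hence not sunflowerable. (Your further expectation that such a pair exists outright in ZFC for $\kappa=\w_1$ would need an additional construction, e.g.\ an $\w_1$-dense Aronszajn line, and is less clear; but the consistency claim already suffices to rule out a ZFC proof.) Note also that, read literally without the size restriction you charitably imposed, the conjecture even conflicts with the paper's own example, since $\w_1\times\Rationals$ is $\w$-dense but shown not to be sunflowerable via Theorem~\ref{Characterization of kappa=scattered sunflowerable linear orderings}. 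So your conclusion is the right one: no blind proof of the statement as given could have succeeded, and the conjecture should be reformulated --- restricted to $\Rationals_\kappa$-like ($\kappa$-saturated or interval-self-similar) orders, posed under hypotheses such as Baumgartner's axiom for $\kappa=\w_1$, or recast as the kind of characterization result suggested by Theorem~\ref{Characterization of kappa=scattered sunflowerable linear orderings}.
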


For $n \geq 3$ recall $\cH_n$ is the countable generic $K_n$-free graph. Then $\cH_n$ is ultrahomogeneous and indivisible (see \cite{MR1047784}). This then suggests the following conjecture.

\begin{conjecture}
For all $n \geq 3$, $\cH_n$ is sunflowerable. 
\end{conjecture}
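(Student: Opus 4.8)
The plan is to establish, for each fixed $k \in \w$, the following ``$\Delta$-system modulo a finite set'' strengthening of $k$-sunflowerability, and then to deduce the conjecture from it: whenever $\cY \cong \cH_n$ has underlying set $Y \subseteq \Powerset_k(\Set)$, there exist a finite set $H$ and a subset $Z \subseteq Y$ with $\cY\rest[Z] \cong \cH_n$ and $z \cap z' \subseteq H$ for all distinct $z, z' \in Z$. Granting this, observe that the ``head map'' $z \mapsto z \cap H$ on $Z$ takes only finitely many values, so it induces a finite partition of $Z$; since $\cH_n$ is indivisible (\cite{MR1047784}), so is $\cY\rest[Z]$, and hence some block $Z_0$ satisfies $\cY\rest[Z_0] \cong \cH_n$. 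For distinct $z, z' \in Z_0$ we then have $z \cap z' \subseteq z \cap H = z' \cap H \subseteq z \cap z'$, so $z \cap z'$ is exactly the common head; thus $Z_0$ is a sunflower and $\cY\rest[Z_0]$ is a structured sunflower. Since $\cH_n$ is countable it is not strongly sunflowerable, so one really must argue $k$ by $k$ in this way.

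For the ``$\Delta$-modulo-finite'' statement I would argue by induction on $k$, following the architecture of the proof of Theorem~\ref{Theorem showing when structure is sunflowerable}. The cases $k \le 1$ are immediate (for $k = 1$ the elements of $Y$ are already pairwise disjoint). For $k = m+1$, fix $\cY \cong \cH_n$ and assume toward a contradiction that $Y$ has no such sub-copy. For each point $x$ let $\cY_x$ be the structure with universe $\{y \setminus \{x\} : x \in y \in Y\} \subseteq \Powerset_m(\Set)$ obtained by transporting $\cY\rest[\{y \in Y : x \in y\}]$ along $y \mapsto y \setminus \{x\}$. If some $\cY_x$ contained a copy of $\cH_n$, the inductive hypothesis would give a $\Delta$-modulo-finite sub-copy inside it, and re-inserting $x$ into each of its sets enlarges the head set by at most $\{x\}$ and contradicts our assumption; so no $\cY_x$ contains a copy of $\cH_n$. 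We then try to build $Z = \{z_i : i \in \w\}$ greedily along an (injective) enumeration $(a_i)$ of $\cH_n$, keeping $z_i \mapsto a_i$ a partial isomorphism and the $z_i$ pairwise disjoint. The step works in the ``generic'' cases because $\cH_n$ has a restricted form of universal duplication of quantifier-free types: for a finite tuple $\bar c$ and a vertex $d$ non-adjacent to every entry of $\bar c$, the set $\{e : \qftp[\cH_n](\bar c, e) = \qftp[\cH_n](\bar c, d)\}$ is exactly the common non-neighbourhood of $\bar c$, a countable $K_n$-free graph which is readily checked to be generic, hence $\cong \cH_n$. Thus, when $\qftp(a_\alpha / \<a_i\>_{i<\alpha})$ is such a ``non-neighbour'' type, the set $C^\circ_\alpha$ of those $b \in \cY$ realizing the corresponding type over $\<z_i\>_{i<\alpha}$ is itself a copy of $\cH_n$, and the dichotomy of Theorem~\ref{Theorem showing when structure is sunflowerable} applies unchanged: either $C^\circ_\alpha$ contains some $z_\alpha$ disjoint from $\bigcup_{i<\alpha} z_i$ and we continue, or $C^\circ_\alpha$ is contained in the finitely many sets $\{y : j \in y\}$ with $j \in \bigcup_{i<\alpha} z_i$, whence by indivisibility of $C^\circ_\alpha \cong \cH_n$ one such $\{y : j \in y\}$ carries a copy of $\cH_n$, so $\cY_j$ does too, contradicting what we just showed. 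If the build runs to the end, $\{z_i : i\in\w\}$ is a pairwise disjoint (hence $\Delta$-modulo-$\emptyset$) copy of $\cH_n$, again a contradiction.

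The obstacle is the remaining kind of stage, where $\qftp(a_\alpha / \<a_i\>_{i<\alpha})$ is \emph{clique-completing}: $a_\alpha$ is adjacent to a $K_{n-2}$ among the vertices already placed. Then any two elements of $C^\circ_\alpha$, together with the copy of $K_{n-2}$ they are all adjacent to, would form a $K_n$, so $C^\circ_\alpha$ is an infinite independent set and contains no copy of $\cH_n$; the dichotomy above collapses, because if $C^\circ_\alpha$ is concentrated in the coordinates already used, pigeonhole delivers only an infinite independent family sharing a point, not a copy of $\cH_n$ to recurse on. This is exactly the failure of universal duplication for $\cH_n$ isolated just before the Conjectures section, and overcoming it seems to require a genuinely new idea. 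Two lines of attack look plausible: (a) a finer bookkeeping that postpones or batches the clique-completing extensions and, when such a stage is forced to share a coordinate, \emph{absorbs} that coordinate into the head set $H$, together with a proof that $H$ stays finite — this is not unreasonable to hope for, since the rigidity of near-clique configurations in $\cH_n$ (two vertices adjacent to a common $K_{n-2}$ cannot themselves be adjacent) sharply limits how often such a stage can introduce a genuinely new shared coordinate; or (b) replacing the greedy construction by a Ramsey-theoretic one, colouring each pair $\{y, y'\} \in [Y]^2$ by the set-theoretic pattern of $y \cap y'$ and invoking the finiteness of the big Ramsey degrees of the edge and the non-edge in $\cH_n$ to pass to a copy of $\cH_n$ inside $\cY$ on which these patterns are nearly constant, then cleaning up with indivisibility as in the reduction. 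I expect the handling of clique-completing types to be the crux, and it is presumably where any attempt currently stalls, which is why the statement is only conjectured.
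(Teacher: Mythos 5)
The statement you are addressing is not proved in the paper at all: it appears only as a conjecture, and indeed the paper shows just before the Conjectures section that $\cH_n$ fails universal duplication of quantifier-free types, which is precisely why the machinery of Theorem \ref{Theorem showing when structure is sunflowerable} does not apply to it. Your proposal does not close that gap. The reduction in your first paragraph is sound: if for every $k$ and every copy $\cY \cong \cH_n$ with $Y \subseteq \Powerset_k(\Set)$ one can find $Z \subseteq Y$ with $\cY\rest[Z] \cong \cH_n$ and all pairwise intersections contained in a single finite set $H$, then the head map $z \mapsto z \cap H$ induces a finite partition, indivisibility of $\cH_n$ yields a sub-copy on which the head is constant, and that sub-copy is a genuine structured sunflower. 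But the ``$\Delta$-modulo-finite'' statement itself is exactly where the difficulty lives, and your inductive construction only goes through at the stages you call generic. At a clique-completing stage the set $C^\circ_\alpha$ of realizations of the type over $\<z_i\>_{i<\alpha}$ is an infinite independent set: it contains no copy of $\cH_n$, so neither horn of the dichotomy you borrow from Theorem \ref{Theorem showing when structure is sunflowerable} is available --- you can neither apply universal duplication to keep building inside $C^\circ_\alpha$, nor use indivisibility of $C^\circ_\alpha$ to push the problem down to some $\cY_x$ and recurse. You state this obstruction accurately (it is the same failure the paper isolates), but your two proposed remedies are only sketched: option (a) gives no argument that the absorbed head set $H$ stays finite over the whole construction, and option (b) invokes finiteness of big Ramsey degrees without addressing that near-constancy of intersection patterns on edges and non-edges separately does not by itself produce a common finite head, nor that the colouring by intersection pattern respects the structure in the way those theorems require.

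So the honest assessment is that your submission is a plausible reduction plus a correct diagnosis of the crux, not a proof; the statement remains open exactly where you say it does. If you want to push further, the clique-completing stages are the only thing standing between your outline and the conjecture, and any quantitative control on how often such a stage can force a genuinely new shared coordinate (your observation that two common neighbours of a $K_{n-2}$ are non-adjacent is a start, but one must rule out infinitely many distinct shared coordinates accumulating across stages, not just bound local configurations) would be the place to invest effort.
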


\bibliographystyle{amsnomr}
\bibliography{bibliography}

\end{document}